\documentclass[final,reqno]{amsart}

\usepackage[margin=1.5in,bottom=1.15in]{geometry}	

\usepackage{bm}

\usepackage{amsmath}	
\usepackage{amssymb}	
\usepackage{amsfonts}	
\usepackage{amsthm}	
\usepackage[foot]{amsaddr}	
\usepackage{stmaryrd}
\usepackage[centercolon=true]{mathtools}	
\usepackage{breqn}
\usepackage[titletoc,toc]{appendix}
\usepackage{chngcntr}
\usepackage{apptools}
\usepackage{threeparttable}
\AtAppendix{\counterwithin{lemma}{section}}
\AtAppendix{\counterwithin{theorem}{section}}
\usepackage[normalem]{ulem}
\mathtoolsset{%
}

\usepackage[utf8]{inputenc}	
\usepackage[T1]{fontenc}	

\usepackage[
cal=cm,
]
{mathalfa}

\usepackage{dsfont}	

\usepackage[light,scaled=.95]{AlegreyaSans}

\usepackage[proportional,tabular,lining,sf,mono=false]{libertine}

\usepackage{acronym}	

\usepackage[labelfont={bf,small},labelsep=colon,font=small]{caption}	

\usepackage{subcaption}	
 \usepackage{float}

\usepackage[svgnames]{xcolor}	
\colorlet{MyRed}{Crimson!60!DarkRed}
\colorlet{MyBlue}{DodgerBlue!75!black}
\colorlet{MyGreen}{DarkGreen}
\colorlet{MyViolet}{DarkMagenta}

\colorlet{MyLightBlue}{DodgerBlue!20}
\colorlet{MyLightGreen}{MyGreen!20}

\colorlet{PrimalColor}{MyBlue}
\colorlet{PrimalFill}{MyLightBlue}
\colorlet{DualColor}{MyRed}

\colorlet{AlertColor}{MyRed}	
\colorlet{BadColor}{MyRed}	
\colorlet{GoodColor}{MyGreen}	
\colorlet{LinkColor}{MediumBlue}	
\colorlet{MacroColor}{MyViolet}
\colorlet{RevColor}{MediumBlue}	

\usepackage{latexsym}	
\usepackage{fontawesome}	
\usepackage{pifont}	

\usepackage{tikz}	
\usetikzlibrary{calc,patterns}	
\usepackage{pgfplots}
\usetikzlibrary{datavisualization} 
\usetikzlibrary{datavisualization.formats.functions} 
\usetikzlibrary{positioning}
\usepackage{array}	
\usepackage{booktabs}	
\usepackage[inline,shortlabels]{enumitem}	
\setlist[1]{topsep=\smallskipamount,itemsep=\smallskipamount,left=\parindent}
\setlist[2]{left=0pt}

\usepackage[kerning=true]{microtype}	
\usepackage{ifluatex}
\ifluatex
  \usepackage{microtype}
  \DisableLigatures{encoding = *, family = *}
\else
  \usepackage{microtype}
\fi

\usepackage{tabto}	
\usepackage{xspace}	

\usepackage[sort&compress]{natbib}	

\bibpunct[, ]{[}{]}{,}{}{,}{,}
\setcitestyle{numbers,square}

\usepackage{hyperref}
\hypersetup{
final,
colorlinks=true,
linktocpage=true,
pdfstartview=FitH,
breaklinks=true,
pdfpagemode=UseNone,
pageanchor=true,
pdfpagemode=UseOutlines,
plainpages=false,
bookmarksnumbered,
bookmarksopen=false,
bookmarksopenlevel=1,
hypertexnames=true,
pdfhighlight=/O,
hyperfootnotes=false,
urlcolor=LinkColor,linkcolor=LinkColor,citecolor=LinkColor,	
pdftitle={},
pdfauthor={},
pdfsubject={},
pdfkeywords={},
pdfcreator={pdfLaTeX},
pdfproducer={LaTeX with hyperref}
}

\usepackage[sort&compress,capitalize,nameinlink]{cleveref}	
\crefname{algo}{Algorithm}{Algorithms}
\crefname{assumption}{Assumption}{Assumptions}
\crefname{case}{Case}{Cases}
\crefname{cond}{Condition}{Conditions}
\crefname{noref}{}{}
\crefname{problem}{Problem}{Problems}

\makeatletter	
\DeclareRobustCommand{\crefnosort}[1]{%
  \begingroup\@cref@sortfalse\cref{#1}\endgroup
}
\makeatother

\usepackage{algorithm}	
\usepackage{algpseudocode}	

\theoremstyle{plain}
\newtheorem{theorem}{Theorem}	
\newtheorem{lemma}{Lemma}	
\newtheorem{proposition}{Proposition}	

\newtheorem*{theorem*}{Theorem}	
\newtheorem*{corollary*}{Corollary}	

\theoremstyle{definition}
\newtheorem{assumption}{Assumption}	
\newtheorem{definition}{Definition}	

\newtheorem*{assumption*}{Assumptions}	
\newtheorem*{definition*}{Definition}	
\newtheorem*{example*}{\raisebox{\depth}{$\blacktriangleright$}~Example}	

\theoremstyle{remark}
\newtheorem{remark}{Remark}	

\newtheorem*{remark*}{Remark}	
\newtheorem*{notation*}{Notation}	

\newcounter{proofstep}

\numberwithin{remark}{section}	
\numberwithin{example}{section}	

\usepackage[showdeletions,suppress]{color-edits}	
\newcommand{\draft}[1]{#1}	

\newcommand{\newmacro}[2]{\newcommand{#1}{\draft{#2}}}	
\newcommand{\newop}[2]{\DeclareMathOperator{#1}{\draft{#2}}}	
\newcommand{\newoplims}[2]{\DeclareMathOperator*{#1}{\draft{#2}}}	
\newcommand{\newsmartmacro}[2]{
	\NewDocumentCommand{#1}{
		E{_}{{}}
	}{
		\draft{#2_{##1}}
	}
}

\newcommand{\eps}{\varepsilon}	

\DeclarePairedDelimiterX{\setdef}[2]{\{}{\}}{#1:#2}	
\DeclarePairedDelimiterXPP{\exclude}[1]{\mathopen{}\setminus}{\{}{\}}{}{#1}

\DeclarePairedDelimiterX{\braket}[2]{\langle}{\rangle}{#1,#2}	
\DeclarePairedDelimiterX{\internalInner}[1]{\langle}{\rangle}{#1}
\usepackage{xparse}
\NewDocumentCommand \inner {s m g}{
	\IfBooleanTF{#1}
	{
	\internalInner*{#2\IfValueT{#3}{,#3}}
	}
	{
	\internalInner{#2\IfValueT{#3}{,#3}}
	}
}

\DeclarePairedDelimiterXPP{\dnorm}[1]{}{\lVert}{\rVert}{_{\ast}}{#1}	
\DeclarePairedDelimiterXPP{\onenorm}[1]{}{\lVert}{\rVert}{_{1}}{#1}	
\DeclarePairedDelimiterXPP{\twonorm}[1]{}{\lVert}{\rVert}{_{2}}{#1}	
\DeclarePairedDelimiterXPP{\supnorm}[1]{}{\lVert}{\rVert}{_{\infty}}{#1}	

\newmacro{\nat}{i}	
\newmacro{\natA}{i}	
\newmacro{\natB}{j}	
\newmacro{\natC}{k}	
\newmacro{\nats}{\mathbb{N}}	
\newmacro{\N}{\nats}	

\newmacro{\integer}{a}	
\newmacro{\intA}{a}	
\newmacro{\intB}{b}	
\newmacro{\intC}{c}	
\newmacro{\integers}{\mathbb{Z}}	
\newmacro{\Z}{\integers}	

\newmacro{\rational}{r}	
\newmacro{\ratA}{r}	
\newmacro{\ratB}{s}	
\newmacro{\ratC}{t}	
\newmacro{\rationals}{\mathbb{Q}}	
\newmacro{\Q}{\mathcal Q}	

\newmacro{\real}{x}	
\newmacro{\realA}{x}	
\newmacro{\realB}{y}	
\newmacro{\realC}{z}	
\newmacro{\reals}{\mathbb{R}}	
\newmacro{\R}{\reals}	

\newmacro{\complex}{z}	
\newmacro{\complexA}{z}	
\newmacro{\complexB}{w}	
\newmacro{\complexC}{z}	
\newmacro{\complexes}{\mathbb{C}}	
\newmacro{\C}{\mathbb{C}}	

\newoplims{\argmax}{arg\,max}	
\newoplims{\argmin}{arg\,min}	
\newoplims{\intersect}{\bigcap}	
\newoplims{\union}{\bigcup}	

\newop{\aff}{aff}	
\newop{\bd}{bd}	
\newop{\bigoh}{\mathcal{O}}	
\newop{\card}{card}	
\newop{\cl}{cl}	
\newop{\conv}{conv}	
\newop{\clconv}{\overline{conv}}	
\newop{\crit}{crit}	
\newop{\diag}{diag}	
\newop{\diam}{diam}	
\newop{\dist}{dist}	
\newop{\dom}{dom}	
\newop{\gra}{\textup{gra}\:}
\newop{\zer}{\textup{zer}}
\newop{\Fix}{\textup{Fix}}
\newop{\eig}{eig}	
\newop{\ess}{ess}	
\newop{\Hess}{Hess}	
\newop{\ind}{ind}	
\newop{\im}{Im}	
\newop{\intr}{int}	
\newop{\Jac}{Jac}	
\newop{\one}{\mathds{1}}	
\newop{\proj}{pr}	
\newop{\prox}{prox}	
\newop{\rank}{rank}	
\newop{\relint}{ri}	
\newop{\sign}{sgn}	
\newop{\supp}{supp}	
\newop{\Sym}{Sym}	
\newop{\tr}{tr}	
\newop{\unif}{unif}	
\newop{\vol}{vol}	

\newop{\Tx}{\mathcal{T}_{\mm_x}}	
\newop{\Txb}{\mathcal{T}_{\mm_{\bar x}}}	
\DeclareMathOperator{\dive}{div}
\newcommand{\de}{\mathbf{D}}
\newcommand{\he}{\mathbf{H}}
\newcommand{\D}{\mathcal D}
\newcommand{\K}{\mathcal K}
\newcommand{\Om}{\Omega}
\newcommand{\CC}{\mathcal C}

\newcommand{\T}{\mathcal T}
\newcommand{\A}{\mathcal A}
\newcommand{\B}{\mathcal B}

\newcommand{\pp}{a.e.\,}
\newcommand{\M}{\textup{(M)}}

\newcommand{\cf}{cf.\xspace}	
\newcommand{\eg}{e.g.,\xspace}	
\newcommand{\ie}{i.e.,\xspace}	

\newcommand{\alt}[1]{#1'}	
\newcommand{\altalt}[1]{#1''}	

\newmacro{\ball}{\mathbb{B}}	
\newmacro{\clball}{\overline{\mathbb{B}}}	
\newmacro{\sphere}{\mathbb{S}}	

\newmacro{\argdot}{\kern.5pt\boldsymbol{\cdot}\kern.5pt}	
\newmacro{\ddt}{\frac{d}{dt}}	
\newmacro{\del}{\partial}	

\newmacro{\const}{c}	
\newmacro{\constA}{a}	
\newmacro{\constB}{b}	
\newmacro{\Const}{C}	

\newmacro{\param}{\theta}	
\newmacro{\params}{\Theta}	

\newmacro{\coef}{\alpha}	
\newmacro{\coefA}{\lambda}	
\newmacro{\coefB}{\mu}	
\newmacro{\coefC}{\nu}	

\newmacro{\expA}{p}	
\newmacro{\expB}{q}	
\newmacro{\expC}{r}	

\newmacro{\precs}{\eps}		
\newmacro{\precsalt}{\delta}	

\newmacro{\asympteq}{\asymp}

\newmacro{\func}{g} 

\newmacro{\realspace}{\R^{\vdim}}	
\newmacro{\vecspace}{\mathcal{X}}	
\newmacro{\dspace}{\R^{\vdim}}	
\newmacro{\subspace}{\mathcal{W}}	

\newmacro{\coord}{i}	
\newmacro{\coordA}{i}	
\newmacro{\coordB}{j}	
\newmacro{\coordC}{k}	
\newmacro{\nCoords}{d}	
\newmacro{\dims}{\nCoords}	
\newmacro{\vdim}{\nCoords}	

\newmacro{\bvec}{e}	
\newmacro{\uvec}{u}	
\newmacro{\bvecs}{\mathcal{E}}	

\newmacro{\point}{x}	
\newmacro{\pointA}{\point}	
\newmacro{\pointB}{\point'}	
\newmacro{\pointalt}{\pointB}
\newmacro{\pointC}{\point''}	
\newmacro{\pointaltalt}{\pointC}
\newmacro{\points}{\mathcal{X}}	
\newmacro{\intpoints}{\relint\points}	

\newmacro{\base}{p}	
\newmacro{\baseA}{q}	
\newmacro{\baseB}{q}	
\newmacro{\baseC}{u}	

\newmacro{\set}{\mathcal{K}}	
\newmacro{\setA}{\set}	
\newmacro{\setB}{\alt\set}	
\newmacro{\setC}{\altalt\set}	

\newmacro{\idx}{i}
\newmacro{\idxalt}{j}
\newmacro{\idxaltalt}{l}
\newmacro{\idxaltaltalt}{k}
\newmacro{\indices}{I}
\newmacro{\indicesalt}{J}

\newmacro{\closed}{\mathcal{C}}	
\newmacro{\cpt}{\mathcal{K}}	
\newmacro{\cptalt}{\alt\cpt}	
\newmacro{\nhd}{\mathcal{U}}	
\newmacro{\nhdalt}{\W}	
\newmacro{\nhdaltalt}{\V}	
\newmacro{\nbd}{\nhd}
\newmacro{\nbdalt}{\nhdalt}
\newmacro{\nbdaltalt}{\nhdaltalt}
\newmacro{\U}{\mathcal{U}}	
\newmacro{\V}{\mathcal{V}}	
\newmacro{\W}{\mathcal{W}}	
\newmacro{\open}{\mathcal{U}}	
\newmacro{\openA}{\mathcal{U}}	
\newmacro{\openB}{\mathcal{V}}	

\newmacro{\mfld}{\mathcal{M}}	
\newmacro{\gmat}{g}	
\newmacro{\gdist}{\dist_{\gmat}}	

\newmacro{\tvec}{z}	
\newmacro{\form}{\omega}	

\newmacro{\radius}{r}
\newmacro{\Radius}{R}
\newmacro{\Radiusalt}{\widetilde{\Radius}}
\newmacro{\radiusalt}{\alt\radius}
\newmacro{\margin}{\delta}
\newmacro{\marginalt}{\alt\margin}
\newmacro{\Margin}{\Delta}
\newmacro{\connectedcomp}{\mathcal{K}}
\newmacro{\plainset}{S} 
\newmacro{\interv}{A} 
\newmacro{\domain}{D}
\newmacro{\bigcpt}{\mathcal{D}}
\newsmartmacro{\bigcptalt}{\alt\bigcpt}
\newsmartmacro{\bigcptaltalt}{\alt\alt\bigcpt}

\newmacro{\cvx}{\mathcal{C}}	
\newmacro{\subd}{\partial}	

\newop{\tspace}{T}	
\newop{\tcone}{TC}	
\newop{\dcone}{\tcone^{\ast}}	
\newop{\ncone}{NC}	
\newop{\pcone}{PC}	
\newop{\hull}{\Delta}	

\newop{\minimize}{minimize}	
\newop{\Opt}{Opt}	
\newop{\Sol}{Sol}	
\newop{\gap}{Gap}	
\newop{\orcl}{\mathsf{G}}	
\newop{\err}{\mathsf{Z}}	

\newmacro{\obj}{f}	
\newmacro{\objalt}{g}	
\newmacro{\objA}{f}	
\newmacro{\objB}{g}	
\newmacro{\sobj}{F}	

\newcommand{\sol}[1][\point]{#1^{\ast}}	

\newmacro{\gvec}{g}	
\newmacro{\gbound}{G}	

\newmacro{\oper}{A}	
\newmacro{\vecfield}{v}	
\newmacro{\vbound}{V}	

\newmacro{\lips}{L}	
\newmacro{\strong}{\alpha}	
\newmacro{\smooth}{\beta}	
\newmacro{\tmplips}{L}	
\newmacro{\tmpbound}{B}	
\newmacro{\growth}{M}	
\newmacro{\regparam}{\lambda}	

\newop{\ex}{\mathbb{E}}	
\newop{\prob}{\mathbb{P}}	
\newop{\probalt}{\mathbb{Q}}	
\newop{\var}{\mathbb{V}}	
\newop{\cov}{cov}	
\newop{\simplex}{\hull}	

\DeclarePairedDelimiterXPP{\exof}[1]{\ex}{[}{]}{}{
 #1}

\DeclarePairedDelimiterXPP{\exwrt}[2]{\ex_{#1}}{[}{]}{}{
 #2}

\DeclarePairedDelimiterXPP{\probof}[1]{\prob}{(}{)}{}{
 #1}

\DeclarePairedDelimiterXPP{\probwrt}[2]{\prob_{\!#1}}{(}{)}{}{
 #2}

\DeclarePairedDelimiterXPP{\oneof}[1]{\one}{\{}{\}}{}{#1}	

\DeclarePairedDelimiterXPP{\varof}[1]{\var}{[}{]}{}{
 #1}

\DeclarePairedDelimiterXPP{\covof}[1]{\cov}{(}{)}{}{
 #1}

\newmacro{\event}{E}	
\newmacro{\eventA}{E}	
\newmacro{\eventB}{H}	

\newmacro{\seed}{\theta}	
\newmacro{\seeds}{\Theta}	
\newmacro{\pdist}{P}	
\newmacro{\history}{\mathcal{H}}	

\newmacro{\sample}{\omega}	
\newmacro{\samples}{\Omega}	
\newmacro{\sspace}{\R^{m}}	

\newmacro{\filter}{\mathcal{F}}	
\newmacro{\probspace}{(\samples,\filter,\prob)}	

\newmacro{\mean}{\mu}	
\newmacro{\sdev}{\sigma}	
\newmacro{\variance}{\sdev^{2}}	
\newmacro{\variancealt}{s^2}

\newmacro{\covmat}{\Sigma}
\newmacro{\hessmat}{H}

\newmacro{\rv}{X}
\newmacro{\trv}{X_\Radius}
\newmacro{\gaussian}{\mathcal{N}}

\newmacro{\partition}{Z}

\newmacro{\nSamples}{n}	
\newmacro{\datapoint}{\xi}

\newmacro{\beforestart}{-1}	
\newmacro{\start}{0}	
\newmacro{\afterstart}{1}	
\newmacro{\running}{\start,\afterstart,\dotsc}	

\newmacro{\run}{n}	
\newmacro{\runA}{n}	
\newmacro{\runB}{k}	
\newmacro{\runC}{\ell}	
\newmacro{\nRuns}{N}	
\newmacro{\nRunsalt}{\alt\nRuns}	
\newmacro{\runs}{\mathcal{\nRuns}}	
\newmacro{\runalt}{\runB}	

\newmacro{\tstart}{0}	
\newmacro{\timeA}{t}	
\newmacro{\timeB}{s}	
\newmacro{\timealt}{\timeB}	
\newmacro{\timealtalt}{u}
\newmacro{\timeC}{\tau}	
\newmacro{\timeD}{\lambda}	
\newmacro{\horizon}{T}	
\newmacro{\horizonalt}{S}	

\newmacro{\seq}{a}	
\newmacro{\seqA}{a}	
\newmacro{\seqB}{b}	
\newmacro{\seqC}{c}	

\newmacro{\state}{x}	
\newsmartmacro{\accstate}{x^{\step}}	
\newmacro{\stateA}{x}	
\newmacro{\stateB}{z}	
\newmacro{\statealt}{\stateB}
\newmacro{\stateC}{y}	
\newmacro{\stateD}{p}	
\newmacro{\statealtalt}{\stateC}
\newmacro{\statealtaltalt}{\stateD}

\newmacro{\cstate}{X}
\newmacro{\cstateA}{X}
\newmacro{\cstateB}{Z}
\newmacro{\cstatealt}{\cstateB}

\newmacro{\startingpoint}{\point_\start}	

\newcommand{\curr}[1][\state]{\draft{#1_{\run}}}	

\newmacro{\mat}{M}	
\newmacro{\hmat}{H}	

\newmacro{\ones}{\mathbf{1}}	
\newmacro{\eye}{I}	
\newmacro{\identity}{\eye}	

\newmacro{\eigval}{\lambda}	

\newop{\Nash}{NE}	
\newop{\CE}{CE}	
\newop{\CCE}{CCE}	
\newop{\NI}{NI}	

\newop{\brep}{br}	
\newop{\reg}{Reg}	
\newop{\preg}{\overline{Reg}}	
\newop{\val}{val}	

\newmacro{\play}{i}	
\newmacro{\playA}{i}	
\newmacro{\playB}{j}	
\newmacro{\playC}{k}	
\newmacro{\nPlayers}{N}	
\newmacro{\players}{\mathcal{\nPlayers}}	

\newmacro{\pure}{\alpha}	
\newmacro{\pureA}{\alpha}	
\newmacro{\pureB}{\beta}	
\newmacro{\pureC}{\gamma}	
\newmacro{\nPures}{A}	
\newmacro{\pures}{\mathcal{\nPures}}	

\newmacro{\strat}{x}	
\newmacro{\stratA}{x}	
\newmacro{\stratB}{\stratA'}	
\newmacro{\stratC}{\stratA''}	
\newmacro{\strats}{\mathcal{X}}	
\newmacro{\intstrats}{\strats^{\circle}}	

\newmacro{\pay}{u}	
\newmacro{\payv}{v}	
\newmacro{\payfield}{v}	
\newmacro{\loss}{\ell}	

\newmacro{\game}{\mathcal{G}}	
\newmacro{\gameFull}{\game(\players,\points,\pay)}	

\newmacro{\fingame}{\Gamma}	
\newmacro{\fingameFull}{\Gamma(\players,\pures,\pay)}	

\newmacro{\minmax}{L}	
\newmacro{\minvar}{\point_{1}}	
\newmacro{\minvarA}{\point_{1}}	
\newmacro{\minvarB}{\minvarA'}	
\newmacro{\minvars}{\points_{1}}	
\newmacro{\maxvar}{\point_{2}}	
\newmacro{\maxvarA}{\point_{2}}	
\newmacro{\maxvarB}{\maxvarA'}	
\newmacro{\maxvars}{\points_{2}}	

\newmacro{\pot}{U}	

\newmacro{\gradientbound}{16 \dims \log 6}	

\newmacro{\hreg}{h}	
\newmacro{\breg}{D}	
\newmacro{\mprox}{P}	
\newmacro{\mirror}{Q}	
\newmacro{\fench}{F}	
\newmacro{\hstr}{K}	
\newmacro{\hrange}{H}	
\newmacro{\proxdom}{\points^{\hreg}}	

\DeclarePairedDelimiterXPP{\bregof}[2]{\breg}{(}{)}{}{#1,#2}	
\DeclarePairedDelimiterXPP{\proxof}[2]{\mprox_{#1}}{(}{)}{}{#2}	

\newmacro{\dpoint}{y}	
\newmacro{\dpointA}{y}	
\newmacro{\dpointB}{\dpointA'}	
\newmacro{\dpointC}{\dpointA''}	
\newmacro{\dpoints}{\mathcal{Y}}	

\newmacro{\dstate}{Y}	
\newmacro{\dvec}{w}	

\newmacro{\zone}{\mathbb{D}}	

\newop{\Eucl}{\Pi}	
\newop{\logit}{softmax}	
\newop{\dkl}{KL}	

\newmacro{\flowmap}{\Theta}	
\DeclarePairedDelimiterXPP{\flowof}[2]{\flowmap_{#1}}{(}{)}{}{#2}	
\DeclarePairedDelimiterXPP{\dotflowof}[2]{\dot\flowmap_{#1}}{(}{)}{}{#2}	

\newmacro{\traj}{x}	
\DeclarePairedDelimiterXPP{\trajof}[1]{\traj}{(}{)}{}{#1}	
\DeclarePairedDelimiterXPP{\difftrajof}[1]{\dot\traj}{(}{)}{}{#1}	

\newcommand{\est}[1]{\hat #1}	

\newmacro{\signal}{\est\gvec}	
\newmacro{\step}{\eta}	
\newmacro{\learn}{\eta}	
\newmacro{\tempinv}{\beta}	
\newmacro{\batch}{B}
\newmacro{\batchidx}{b}

\newmacro{\efftime}{\tau}	

\newmacro{\error}{Z}	

\newmacro{\noise}{\mathsf{U}}	
\newmacro{\snoise}{\xi}	
\newmacro{\noisepar}{\sdev}	
\newmacro{\noisevar}{\variance}	
\newmacro{\aggnoise}{\mathrm{\uppercase\expandafter{\romannumeral1}}}	
\newmacro{\supnoise}{\aggnoise_{\infty}}	
\newmacro{\maxnoise}{\aggnoise^{\ast}}	

\newmacro{\bias}{b}	
\newmacro{\drift}{b}	
\newmacro{\bbound}{B}	
\newmacro{\sbias}{\chi}	
\newmacro{\aggbias}{\mathrm{\uppercase\expandafter{\romannumeral2}}}	
\newmacro{\supbias}{\aggbias_{\infty}}	
\newmacro{\maxbias}{\aggbias^{\ast}}	

\newmacro{\sbound}{M}	
\newmacro{\aggsecond}{\mathrm{\uppercase\expandafter{\romannumeral3}}}	
\newmacro{\supsecond}{\aggsecond_{\infty}}	
\newmacro{\maxsecond}{\aggsecond^{\ast}}	

\newmacro{\mgf}{M}	
\DeclarePairedDelimiterXPP{\mgfof}[2]{\mgf_{#1}}{(}{)}{}{#2}	

\newmacro{\cgf}{K}	
\DeclarePairedDelimiterXPP{\cgfof}[2]{\cgf_{#1}}{(}{)}{}{#2}	

\newmacro{\ham}{\mathcal{H}}	
\DeclarePairedDelimiterXPP{\hamof}[2]{\ham_{#1}}{(}{)}{}{#2}	

\newmacro{\lag}{\mathcal{L}}	
\DeclarePairedDelimiterXPP{\lagof}[2]{\lag_{#1}}{(}{)}{}{#2}	

\newmacro{\mom}{p}
\newmacro{\pos}{q}
\newmacro{\vel}{v}
\newmacro{\velalt}{w}

\newmacro{\hamilt}{\mathcal{H}}
\newmacro{\hamiltalt}{\bar\hamilt}

\newmacro{\lagrangian}{\mathcal{L}}
\newmacro{\lagrangianalt}{\bar\lagrangian}
\newmacro{\Sign}{\textup{Sign}}
\newmacro{\Signp}{\textup{Sign}^+}

\newmacro{\curve}{\gamma}	
\DeclarePairedDelimiterXPP{\curveat}[1]{\curve}{(}{)}{}{#1}	
\DeclarePairedDelimiterXPP{\diffcurveat}[1]{\dot\curve}{(}{)}{}{#1}	

\newmacro{\curves}{\Gamma}
\DeclarePairedDelimiterXPP{\curvesat}[3]{\curves_{#1}}{(}{)}{}{#2;#3}	

\newmacro{\contcurves}{\contfuncs}
\DeclarePairedDelimiterXPP{\contcurvesat}[2]{\contcurves_{#1}}{(}{)}{}{#2}	

\newmacro{\lint}{\cstate}	
\DeclarePairedDelimiterXPP{\lintat}[1]{\lint}{(}{)}{}{#1}	

\newmacro{\qpot}{B}	
\newmacro{\qmat}{B}	
\newmacro{\energy}{E}	

\newmacro{\action}{\mathcal{S}}
\newmacro{\act}{\mathcal{S}}
\DeclarePairedDelimiterXPP{\actof}[2]{\act_{#1}}{[}{]}{}{#2}	

\newmacro{\pth}{\curve}
\newmacro{\pthalt}{\varphi}
\newmacro{\pths}{\curves}
\newmacro{\dpth}{\xi}
\newmacro{\dpthalt}{\zeta}
\newmacro{\daction}{\mathcal{A}}
\newmacro{\quasipot}{V}
\newmacro{\dquasipot}{B}
\newmacro{\symdquasipot}{C}
\newmacro{\dquasipotalt}{\widetilde{\dquasipot}}
\newmacro{\invpot}{W}
\newmacro{\dinvpot}{\energy}
\newmacro{\logmgf}{H}
\newmacro{\contfuncs}{\mathcal{C}}
\newmacro{\map}{F}
\newmacro{\rate}{\rho}
\newmacro{\level}{s}

\newmacro{\eqcl}{\mathcal{K}}
\newmacro{\neqcl}{\nComps}
\newmacro{\primvar}{\alpha}
\newmacro{\bdprimvar}{\primvar_\infty}
\newmacro{\bdvar}{\noisepar_{\infty}^{2}}
\newmacro{\exponent}{s}
\newmacro{\bdpot}{\pot_\infty}
\newmacro{\potgbound}{C}

\newmacro{\ratenhd}{\mathcal{N}}

\newmacro{\diffcurr}{\delta \curr}

\newmacro{\comp}{\mathcal{K}}
\newmacro{\iComp}{i}
\newmacro{\jComp}{j}
\newmacro{\kComp}{k}
\newmacro{\nComps}{K}

\newmacro{\iGround}{0}
\newmacro{\ground}{\comp_{\iGround}}
\newmacro{\groundstates}{\sol[\indices]}
\newmacro{\asymptstables}{AS}
\newmacro{\nontrivialattract}{NTA}

\addauthor[Pan]{PM}{MediumBlue}

\newmacro{\meas}{\mu}	
\newmacro{\altmeas}{\nu}	
\newmacro{\occmeas}{\meas}	
\newmacro{\borel}{\mathcal{B}}	
\newmacro{\size}{\delta}	
\newmacro{\toler}{\eps}	

\newcommand{\Lip}{\textup{Lip}}
\newcommand{\kk}{\mathsf{k}}

\newcommand{\dd}{\mathrm{d}}
\newcommand{\mm}{\mathsf{m}}

\newcommand{\FF}{\mathsf{F}}
\newcommand{\GG}{\mathsf{G}}
\newcommand{\BB}{\mathsf{B}}

\newcommand{\bnu}{\boldsymbol{\nu}}
\newcommand{\X}{\mathcal{X}}
\newcommand{\LL}{\textup{{L}}}

\newcommand{\I}{\iota}

\makeatletter
\newcommand{\leqnomode}{\tagsleft@true\let\veqno\@@leqno}
\newcommand{\reqnomode}{\tagsleft@false\let\veqno\@@eqno}
\makeatother
\begin{document}

 

\title[A prediction-correction Hughes' model]{A coupled prediction-correction Hughes' model\\ for congested crowd motion}

\author
[H.~Ennaji]
{Hamza Ennaji$^{c,\ast}$}
\address{$^{c}$\,%
Corresponding author.}
\address{$^{\ast}$\,%
Univ. Grenoble Alpes, CNRS, Grenoble INP*, LJK, 38000 Grenoble, France.}
\email{hamza.ennaji@univ-grenoble-alpes.fr}
\author
[N.~Igbida]
{Noureddine Igbida$^{\sharp}$}
\address{$^{\sharp}$\,%
Institut de recherche XLIM, UMR-CNRS 7252, Faculté des Sciences et Techniques, Université de Limoges, 87100 Limoges, France.}
\email{noureddine.igbida@unilim.fr}

\author
[G.~Jradi]
{Ghadir Jradi $^{\sharp}$}
\email{ghadir.jradi@gmail.com}

\author[J.M. Urbano]{Jos\'{e} Miguel Urbano$^{\diamond}$}
\address{$^{\diamond}$\,Applied Mathematics and Computational Sciences (AMCS), Computer, Electrical and Mathematical Sciences and Engineering Division (CEMSE), King Abdullah University of Science and Technology (KAUST), Thuwal, 23955-6900, Kingdom of Saudi Arabia and CMUC, Department of Mathematics, University of Coimbra, 3000-143 Coimbra, Portugal.}
\email{miguel.urbano@kaust.edu.sa}



\subjclass[2020]{Primary 76A30, 65M22. Secondary 35Q49, 91D10}



\keywords{Crowd motion models; Hughes' model; Minimum flow problem; primal-dual algorithms.}


\makeatletter	
\newcommand{\thmtag}[1]{	
  \let\oldthetheorem\thetheorem	
  \renewcommand{\thetheorem}{#1}	
  \g@addto@macro\endtheorem{	
    \addtocounter{theorem}{0}	
    \global\let\thetheorem\oldthetheorem}	
  }
\makeatother

\makeatletter	
\newcommand{\asmtag}[1]{	
  \let\oldtheassumption\theassumption	
  \renewcommand{\theassumption}{#1}	
  \g@addto@macro\endassumption{	
    \addtocounter{assumption}{0}	
    \global\let\theassumption\oldtheassumption}	
  }
\makeatother

\begin{abstract}
In this work, we introduce a new macroscopic model for crowd motion inspired by the celebrated Hughes' model \cite{Hughes2002, Hughes2003}, which couples a nonlinear conservation law for the pedestrian density with an Eikonal equation describing the shortest path to the target. Our approach can be viewed both as a modification of Hughes' original formulation and as a refinement of the prediction–correction framework proposed in the recent work \cite{ennaji2023prediction}. The resulting model incorporates anticipatory behavior and dynamic route adjustment, offering a more realistic representation of crowd dynamics in complex environments. We present the mathematical formulation of the model, discuss its well-posedness properties, and illustrate its qualitative behavior through numerical simulations. Ultimately, we show, at least from a numerical perspective, that this variant provides a promising avenue towards establishing the well-posedness of the classical Hughes' model, which has remained a challenging open problem for a long time.

\end{abstract}


\allowdisplaybreaks	
\acresetall	
\acused{iid}
\acused{LHS}
\acused{RHS}

\maketitle

\setcounter{tocdepth}{1}


\section{Introduction}\label{sec:intro}
Over the past two decades, the analysis, modeling, and simulation of pedestrian behavior have received considerable attention, driven by the need to better understand and manage human crowds in a variety of contexts, including sports events, cultural gatherings, and religious assemblies. One can distinguish three major families of mathematical models used to describe pedestrian dynamics:

\begin{enumerate}

\item {\textit{Microscopic models.}} These are based on differential equations in which each pedestrian is described by its position $x(t)$ and velocity $v(t)$, or on discrete random walk approaches such as cellular automata (see, \textit{e.g.}, \cite{Maury&Venel, maury2007modele, blue2000modeling, blue1998emergent, BURSTEDDE2001507}).

\item {\textit{Macroscopic models.}} In this framework, the number of pedestrians is assumed to be large enough to be represented by a density $\rho(x,t)$. Such models typically take the form of conservation laws (first-order models) or couple a conservation law for the density with additional equations for the velocity field (second-order models). Within this class, one may also include control-theoretic and mean-field approaches (see, \textit{e.g.}, \cite{SLHughs, Hughs2DSIM, Silvasim, carrillo2016improved, AMADORI2012259, Hughs2, Hughs33, Hughes2002, Hughes2003,gibelli2024macroscopic, Herzog&al}).

\item {\textit{Mesoscopic models.}} These rely on a statistical representation of the crowd. Each pedestrian is characterized by a position–velocity pair $(x,v)$, and one studies a kinetic equation satisfied by the probability distribution $f(t,x,v)$ of individuals with state $(x,v)$ at time $t$ (see, \textit{e.g.}, \cite{degond2013vision, henderson1974fluid, dogbe2012modelling, fermo2013fully, bellomo2013microscale, MRS1, MRSV}).
\end{enumerate}

In pedestrian dynamics, modeling the congestion that arises during population evolution is a major challenge. In the literature (see \eg \cite{Maury-EDP-Normandie}), congestion is often handled through two main paradigms: \emph{soft congestion} and \emph{hard congestion}. In the soft congestion setting, the density is continuously penalized, meaning that as local density increases, the crowd's speed decreases, or equivalently, a pressure term disperses the crowd toward unoccupied areas. This is usually done using density-dependent constitutive laws. As for the hard congestion approach, one enforces a strict maximum density constraint $(\rho \leq \rho_{\max})$. This induces a singular pressure that is only activated when the maximal density is reached. Mathematically, this is often modeled using a maximal monotone graph such as the $\Signp$ function.

Various macroscopic models have been proposed to describe collective motion, ranging from congestion-based approaches to route-choice strategies driven by global optimization principles. For an extensive review of mathematical models for crowd dynamics, we refer the reader to \cite{bellomo2011modeling} and the monograph \cite{cristiani2014multiscale}. Among these, prediction-correction models provide an efficient framework to describe hard congestion effects (cf. \cite{MRS1, MRSV, ennaji2023prediction, IgUrbano}). The dynamics is decomposed into a prediction step corresponding to the desired pedestrian motion, followed by a correction step that ensures density constraints are satisfied. While such models naturally reproduce congestion phenomena and guarantee the admissibility condition on the density, numerical simulations reported in \cite{ennaji2023prediction} have identified limitations in evacuation scenarios.  Although pedestrians tend to follow the shortest paths toward the exits, the resulting dynamics fail to sufficiently redistribute the crowd toward underutilized areas. Consequently, pedestrians remain concentrated along preferred trajectories, leaving other regions almost empty and resulting in suboptimal use of available space.

Alternatively, Hughes' model introduces a global route-choice mechanism based on a potential satisfying an Eikonal equation coupled with the crowd density. This framework naturally accounts for the pedestrians' tendency to avoid congested areas, generating self-organized population redistributions. From a modeling perspective, it therefore provides a more realistic description of evacuation strategies. Nevertheless, the strong coupling between the transport equation and the Eikonal relation leads to significant theoretical difficulties (\cf \cite{Amadori2023, Hughs2DSIM}). In particular, the low regularity of the density, the nonlinear dependence of the cost function, and the potential emergence of congestion make the well-posedness analysis delicate at both the continuous and numerical levels.

The objective of this work is to combine the advantages of both approaches by coupling the prediction-correction framework with Hughes' strategy. The idea is to exploit Hughes' mechanism to dynamically reorient pedestrians toward less congested regions, thereby improving the spatial redistribution of the crowd, while using the correction step to enforce density constraints and provide additional regularity and stability. The correction mechanism also supplies congestion information required to define meaningful route-choice strategies in highly saturated regimes. The resulting coupled model therefore aims at combining realistic evacuation behaviors, efficient space utilization, and improved theoretical and numerical properties.

\subsection{Brief presentation of Hughes' model}
Hughes' model \cite{Hughes2002, Hughes2003} is one of the most well-known mathematical frameworks for crowd motion. It describes a population in a domain $\Omega \subset \mathbb{R}^2$ that attempts to reach one or more targets or exits as quickly as possible, in a "rational" manner, meaning that individuals tend to avoid regions of high density. The population density $\rho(x,t)$ at position $x \in \Omega$ and time $t > 0$ satisfies the conservation law
\begin{equation}\label{eq:CL-intro}
\partial_t \rho + \nabla \cdot U[\rho] = 0, \quad t > 0,~x \in \Omega,
\end{equation}
where the flux $U[\rho]$ is defined by \begin{equation}\label{HM0} U[\rho] = -\rho v(\rho)^2 \nabla u. \end{equation} 
The function $v$ represents the pedestrian speed, which typically decreases with the density. A common choice is $v(\rho) = 1 - \rho$. The potential $u$ denotes the so-called cost function or distance potential, and satisfies the weighted Eikonal equation
\begin{equation}\label{eq:Eikonal-intro}
\vert\nabla u(x,t)\vert = \frac{1}{v(\rho(x,t))}, \quad x \in \Omega \setminus \Gamma_D.
\end{equation}
 The system \eqref{eq:CL-intro}--\eqref{eq:Eikonal-intro} is complemented with an initial condition $\rho(x,0) = \rho_0(x)$ in $\Omega$, and the following boundary condition for the potential at the exits:
\begin{equation}\label{eq:BC-Hughes-intro}
    u(x,t) = 0 \quad \text{on } \Gamma_D, 
\end{equation}
where $\partial \Omega = \Gamma_D \cup \Gamma_N$. On the impermeable walls $\Gamma_N$, a zero-flux boundary condition is imposed for the population density, meaning $U[\rho] \cdot \mathbf{n} = 0$, to prevent pedestrians from crossing the walls, where $\mathbf{n}$ denotes the outward unit normal vector. 

\subsection{Challenges in Hughes' model}\label{subsection:diff-hm}

One of the main difficulties in studying Hughes' model lies in the blow-up of the right-hand side of \eqref{eq:Eikonal-intro} as the density approaches the saturated region $[\rho = 1]$. Moreover, the nonlinear dependence of the flux $U[\rho]$ in \eqref{eq:CL-intro} on the density suggests the use of entropic solutions for scalar conservation laws.
A first attempt to address this issue was made in \cite{DiF}, where a double regularization of the system \eqref{eq:CL-intro}–\eqref{eq:Eikonal-intro} was proposed. More precisely, the authors studied the system
\begin{equation}\label{eq:hughes-reg}
	\left\lbrace
	\begin{aligned}
\partial_t \rho + \nabla \cdot U[\rho] &=\epsilon\Delta \rho, \\
-\delta_1\Delta u + \vert\nabla u\vert^2 &= \frac{1}{(v(\rho)+\delta_2)^2},
	\end{aligned}
	\right.
\end{equation}
for some regularization parameters $\epsilon, \delta_1, \delta_2 > 0$, together with suitable boundary conditions.
Although the well-posedness of \eqref{eq:hughes-reg} was established only in one space dimension in \cite{DiF} (see also \cite{AMADORI2012259, Hughs2}), many works have since developed numerical schemes to approximate \eqref{eq:hughes-reg} (see, \textit{e.g.}, \cite{Silvasim, SLHughs}).
Another approach to overcome these difficulties was proposed in \cite{carrillo2016improved}, where the authors considered a locally regularized variant of Hughes' model. Specifically, they introduced a flux of the form $U[\rho] = -\rho v(\rho)\mathcal{P}(\nabla u)$, with $\mathcal{P}$ denoting a suitable smooth approximation operator, and solved the Eikonal equation \eqref{eq:Eikonal-intro} with a cost function of the form $W(x) = \frac{\chi_w(x)}{v(\rho_{\max} - \epsilon)}$, where $\chi_w(x) \in [0,1]$ identifies regions close to walls or obstacles.
In summary, the mathematical and numerical analysis of Hughes' model remains an active and challenging research area. We refer the reader to the recent survey \cite{Amadori2023} for a comprehensive overview of the topic.
\subsection{Prediction–correction approach in \cite{ennaji2023prediction}}
In \cite{ennaji2023prediction}, a prediction–correction model for crowd motion was proposed, where pedestrian dynamics are described by the evolution of crowd density over time and space.
In the prediction phase, the density evolution is governed by the transport equation
\begin{equation}\label{transport1}
\partial_t \tilde \rho + \dive(\tilde \rho V) = 0,
\end{equation}
where $V$ is a velocity field obtained by solving the Eikonal equation
$\vert\nabla \phi\vert = \kk$,
with $\kk$ a positive continuous function.
The predicted density $\tilde \rho$ obtained from \eqref{transport1} may not be admissible, \ie it can exceed the maximal value $1$.
To recover an admissible density, the authors proposed a correction step formulated as the following minimum flow problem:
\begin{equation}\label{minproc}
	\begin{array}{c}
		\inf_{(\Phi,\rho)}\left \{  \int_\Omega F(x,  \Phi  (x) )\: \dd x -  \int_{\Gamma_D} g(x)\: \Phi\cdot\bnu \:  \dd x   \: :\:    0\leq \rho\leq 1,\:   -\dive(\Phi)  =\tilde \rho -\rho   \hbox{ in }   \D'(  \Omega )   \right.  \\   \\  \hspace*{1cm}       \hbox { and }   \Phi\cdot \bnu =0  \hbox{ on }\Gamma_N        \Big\},
	\end{array}
\end{equation} 
where $\Omega$ is a bounded open subset of $\R^N$ with boundary $\partial\Omega = \Gamma_N \cup \Gamma_D$. The cost function $F$ is defined by
\[
F(x,\xi)=\frac{c(x)}{s}\vert \xi \vert ^s, \hbox{ for any } x\in \Omega \hbox{ and } \xi \in \R^N,
\]
and $g$ is an additional boundary cost defined on $\Gamma_D$.
Here, $\bnu$ denotes the unit outward normal to $\partial\Omega$, and $\eta$ is a prescribed function. For the case $s=1$, the Pedestrian Congestion Model (PCM for short) proposed in \cite{ennaji2023prediction} reads
\begin{equation}\tag{PCM}\label{evolgran0}
	\left\{
	\begin{array}{ll}
		\left. 	\begin{array}{l}
			\partial_t \rho -\dive (\Phi) +\dive(\rho \:  V)= 0 \\   \\
			\Phi=m \nabla p,\: m\geq0,  \: \vert \nabla p\vert \leq 1,\:  m(1-\vert \nabla p\vert)=0\\   \\
			\rho \in  \hbox{Sign}^+(p) \\ \\
		\end{array} \right\}  \quad & \hbox{ in } Q:=[0,T[ \times \Omega,\\  \\
		(\rho V - \Phi) \cdot \nu =0 & \hbox{ on }[0,T[ \times\Gamma_N,  \\  \\
		p=0    & \hbox{ on }[0,T[ \times\Gamma_D,
	\end{array}
	\right.
\end{equation}
where $\hbox{Sign}^{+}$ is the maximal monotone graph given by
\[
\operatorname{Sign}^{+}(r)=\left\{\begin{array}{cl}
	1 & \text { for } r>0 \\
	{[0,1]} & \text { for } r=0 \\
	0 & \text { for } r<0.
\end{array} \right.
\]
The gradient constraint imposed on the pressure field admits a natural physical interpretation in terms of limited internal stress propagation within the congested medium. In the classical hard-congestion framework, the pressure acts as a correction mechanism preventing the density from exceeding the maximal admissible value. However, without additional constraints, the resulting correction force $-\nabla p$ may become arbitrarily large, leading to unrealistically strong redistribution effects. Introducing the constraint $\vert \nabla p \vert \leq 1$ bounds the spatial variation of the pressure, thereby limiting the intensity of the corrective velocity generated by congestion. Physically, this assumption reflects the fact that the medium possesses a finite capacity to transmit forces or reorganize itself. In crowd dynamics, it models the limited ability of pedestrians to react and rearrange under compression; in granular materials, it is analogous to a maximal slope or yield criterion governing the onset of flow; in porous or congested transport systems, it represents bounded stress transmission and finite propagation of corrective effects. Consequently, the congested phase no longer behaves as a perfectly rigid, incompressible region, but rather as a medium with constrained internal response, exhibiting behavior closer to that of granular or elastoplastic materials.

Alternatively, the hard congestion constraint may also be replaced by a \emph{soft congestion} mechanism while still preserving the maximal density bound $\rho\leq 1$. Instead of considering the unilateral constraint
 \begin{equation}
	0\leq \rho \leq 1,
	\qquad
	p(1-\rho)=0,
\end{equation}
the pressure may be introduced as a continuous, increasing function of the density 
whose intensity grows as the density approaches the maximal admissible value. Typical examples include singular pressure laws, which can be equivalently formulated through an equation of state of the form $\rho=\beta(p)$.
\begin{figure}[htbp]
	\centering
	\begin{tikzpicture}
		\begin{axis}[
			width=0.44\textwidth,
			height=0.4\textwidth,
			xlabel={$r$},
			ylabel={$\text{Sign}^+(r)$},
			domain=-2:2,
			samples=100,
			ymin=-0.2, ymax=1.2,
			axis lines=middle,
			xtick={-2,-1,0,1,2},
			ytick={0,1},
			enlargelimits=true
			]
			\addplot[blue, thick, domain=-2:0] {0};
			\addplot[blue, thick, domain=0:2] {1};
			\draw[blue, thick] (axis cs:0,0) -- (axis cs:0,1);
			\node at (axis cs:0.2,0.5) [anchor=west, black] {\footnotesize $[0,1]$};
		\end{axis}
	\end{tikzpicture}%
	\hfill%
	\begin{tikzpicture}
		\begin{axis}[
			width=0.44\textwidth,
			height=0.4\textwidth,
			xlabel={$r$},
			ylabel={$\beta^{1}_{\delta}(r)$},
			domain=-2:2,
			samples=200,
			ymin=-0.2, ymax=1.2,
			axis lines=middle,
			xtick={-2,-1,0,1,2},
			ytick={0,1},
			enlargelimits=true,
			legend pos=south east,
			legend style={font=\tiny}
			]
			\addplot[red, thick] {1 / (1 + exp(-x/0.5))};
			\addplot[green, thick] {1 / (1 + exp(-x/0.2))};
			\addplot[black, thick] {1 / (1 + exp(-x/0.1))};
			\addplot[blue, thick] {1 / (1 + exp(-x/0.02))};
		\end{axis}
	\end{tikzpicture}%
	\hfill%
	\begin{tikzpicture}
		\begin{axis}[
			width=0.44\textwidth,
			height=0.4\textwidth,
			xlabel={$r$},
			ylabel={$\beta^{2}_{\delta}(r)$},
			domain=-2:2,
			samples=400, 
			ymin=-0.2, ymax=1.2,
			axis lines=middle,
			xtick={-2,-1,0,1,2},
			ytick={0,1},
			enlargelimits=true,
			legend pos=south east,
			legend style={font=\tiny}
			]
			\addplot[red, thick] {0.5 + atan(x/0.5)/180};
			\addlegendentry{$\delta=0.5$}
			
			\addplot[green, thick]{0.5 + atan(x/0.2)/180};
			\addlegendentry{$\delta=0.2$}
			
			\addplot[black, thick] {0.5 + atan(x/0.1)/180};
			\addlegendentry{$\delta=0.1$}
			
			\addplot[blue, thick] {0.5 + atan(x/0.02)/180};
			\addlegendentry{$\delta=0.02$}
		\end{axis}
	\end{tikzpicture}
	
	\caption{Left: The graph of the multivalued function $\text{Sign}^+$. Center \& Right: Smooth approximations given respectively by the logistic function $\beta^{1}_{\delta}(r) = \frac{1}{1 + \exp\left(-\frac{r}{\delta}\right)}$ and the scaled arctangent $\beta^{2}_{\delta}(r) = \frac{1}{2} + \frac{1}{\pi} \arctan\left(\frac{r}{\delta}\right)$ for different values of the scaling parameter $\delta$.}
	\label{fig:smooth-approx}
\end{figure}
In such formulations, the congestion effects increase progressively as $\rho$ approaches the saturation threshold, generating repulsive corrections before complete saturation. The singular behavior near $\rho=1$ ensures that the density remains below the maximal value, thereby avoiding the abrupt transitions typically induced by hard congestion constraints. Physically, this describes systems capable of anticipating congestion, in which internal stresses build up continuously to produce smoother spatial reorganizations. Consequently, soft congestion models provide a continuous transition between free and congested regimes while preserving the maximal density principle $\rho<1$ and improving regularity properties.

As highlighted previously, both approaches exhibit complementary strengths and weaknesses. On the one hand, while the standard prediction-correction framework \cite{ennaji2023prediction} enforces capacity constraints, it lacks a natural dispersion mechanism. Evacuation simulations show that pedestrians strictly follow shortest paths, leaving parts of the domain unused while others remain dense. On the other hand, Hughes' strategy provides excellent anticipatory routing but suffers from severe analytical and numerical instabilities.

To overcome these respective limitations, we introduce a coupled macroscopic model in the following section. By embedding Hughes' route-choice mechanism within the stable prediction-correction framework, the resulting dynamics naturally reorient pedestrians toward less congested regions while rigorously preserving density constraints. This coupling provides a regularization to achieve a Hughes-type strategy that is both theoretically well-posed and numerically robust, promoting a highly balanced use of available space during evacuations.

 \subsection{A coupled prediction-correction Hughes' model }\label{sec:model}

We improve the model \eqref{evolgran0} by modifying the velocity field $V$. Inspired by Hughes' model, we define a velocity field that depends implicitly on the density $\rho$. The goal is to orient pedestrians towards the nearest exit while avoiding crowded areas.

We propose the velocity field $V=-\nabla \de$, where $\de$ is the solution of the following Eikonal equation:
\begin{equation}\label{eq:V-model}
	(\textup{E}_\rho):	\left\{ \begin{array}{ll}
		\vert \nabla \de\vert =\he(\rho)\quad & \hbox{ in }\Omega,\\   \\
		\de =0 & \hbox{ on }\Gamma_D,
	\end{array}
	\right.
\end{equation}
where $\he:\R \to  (0,+\infty)$ is a cost function satisfying the assumptions below.

We depart from \eqref{evolgran0} model, by handling the congestion using a continuous function $\beta$ instead of the singular maximal monotone graph $\Signp$. 
We shall therefore focus on the following coupled system:
\begin{equation}\leqnomode
    \tag{SC-HM}
    \label{eq:regularized-model}
    \left\{
    \begin{array}{ll}
        \left. \begin{aligned}
            &\partial_t \rho -\dive (\Phi) -\dive(\rho \nabla \de) = f \\
            &\Phi=m \nabla p, \quad \vert \nabla p\vert \leq 1, \quad m(1-\vert \nabla p\vert)=0\\
            &\rho=\beta (p), \quad \vert \nabla \de \vert=\he(\rho)
        \end{aligned} \right\} \quad & \text{in } Q := [0,T[ \times \Omega, \\[4ex]
        \Phi \cdot \nu = 0 & \text{on } [0,T[ \times\Gamma_N, \\[1ex]
        p = 0, \quad \de = 0 & \text{on } [0,T[ \times\Gamma_D.
    \end{array}
    \right.
\end{equation}
Hereafter, the system \eqref{eq:regularized-model} will be referred to as the \emph{Soft Congestion Hughes' Model}. In what follows, we work under the following standing assumptions:
\begin{assumption}\label{assumption:1}
\begin{itemize}
        \item $\Om$ is a bounded domain of $\R^d$ ($d \geq 1$) with boundary $\partial \Om = \Gamma_D \cup \Gamma_N$.
        \item $\beta:\R \to \R$ is a nondecreasing function, bi-Lipschitz on compact sets, and satisfying $\beta(0)=0$.
        \item $\he: \R \to (0,+\infty)$ is a continuous, locally Lipshitz and nondecreasing function. It satisfies, moreover, the compatibility assumption $\im(\beta)\subseteq \dom(\he)$.
  \item $f \in L^1(0,T;\text{Lip}_D^*(\Omega))$, where $\text{Lip}_D(\Omega) = \left\{ z \in \text{Lip}(\Omega) : z=0 \text{ on } \Gamma_D \right\}$ and $\text{Lip}_D^*(\Omega)$ is its topological dual space.
\end{itemize}
\end{assumption}

\begin{remark}
    For the practical case concerning crowd motion, the profile $\beta$ is typically chosen such that $0\leq \beta(r)<1$ and $\lim_{r\to\infty} \beta(r)=1$. This behavior aligns with the compatibility assumption, ensuring that the classical Hughes' cost $\he(\rho) = 1/(1-\rho)$ remains well-defined for any finite pressure.
\end{remark}

The velocity field $V=-\nabla\de$ depends directly on $\rho$, via the law $\rho = \beta(p)$. The function $\he$ measures the walking cost through dense regions. Namely, in congested regions ($\rho\simeq 1$), the walking cost is high ($\he \gg 1$), which forces pedestrians to move toward unoccupied zones. Conversely, in empty regions, pedestrians are allowed to move at their desired speed. In the literature, a widely used cost function takes the form 
\[
\he(\rho) = \frac{1}{v(\rho)},
\]  
where $v$ is a function penalizing high densities. Drawing inspiration from vehicular traffic flow models, a common choice in Hughes' model is $v(\rho) = 1-\rho$. Other examples include $v(\rho) = 1- \exp(-c\frac{1-\rho}{\rho})$ for $c>0$ (see \eg \cite{Silvasim} for further discussion). As discussed previously in \cref{subsection:diff-hm}, in all of these examples, the cost $\he$ blows up as $\rho\to 1$. 

\subsection{Connection with the classical Hughes' model}\label{subsection:connection-cl-HM}
  The core novelty of \eqref{eq:regularized-model} lies in the introduction of the pressure variable $p$ and the deviation flux $\Phi$.  By the complementarity condition 
\[
\Phi=m \nabla p, \quad \vert \nabla p\vert \leq 1, \quad m(1-\vert \nabla p\vert)=0,
\]
one sees that in any region where $\vert\nabla p\vert < 1$, the corrective flux $\Phi$ vanishes. In such regimes,  the \eqref{eq:regularized-model} perfectly aligns with the model \eqref{eq:CL-intro} where 
 \begin{equation}\label{VH1}
  U[\rho]= -\rho\nabla \de,    
 \end{equation}  
and  \begin{equation}\label{VH2}\vert \nabla \de \vert =\frac{1}{v(\rho)},  \end{equation}  
which corresponds to a close variant of Hughes' model.  Whether this equivalence holds universally remains an open question, although numerical simulations presented in \cref{subsection:comp_H} strongly suggest that this is indeed the case.  

Although the present model does not exactly coincide with the classical Hughes' framework (where $U[\rho]$ is given by \eqref{HM0}), it retains its essential strategic ingredient: the ability of agents to anticipate future congestion through an Eikonal potential and to adapt their trajectories accordingly. The departure from the standard Hughes' model concerns mainly the relation between the strategic potential and the resulting transport velocity.
 To make the connection with the classical Hughes' model more explicit, one may consider the following variant instead of \eqref{eq:regularized-model} variant
\begin{equation}\leqnomode
   \label{eq:regularized-model1}
    \left\{
    \begin{array}{ll}
        \left. \begin{aligned}
            &\partial_t \rho - \dive(\Phi)-\dive\!\big(\rho v(\rho)^2\nabla \de\big)=f,\\
            &\Phi=m\nabla p,\qquad |\nabla p|\leq 1,\qquad m(1-|\nabla p|)=0,\\
            &\rho=\beta(p),\qquad |\nabla \de|=\he(\rho),
        \end{aligned}\right\}
        & \text{in } Q,\\[3ex]
        \Phi\cdot\nu=0
        & \text{on } (0,T)\times\Gamma_N,\\[1ex]
        p=0,\qquad \de=0
        & \text{on } (0,T)\times\Gamma_D.
    \end{array}
    \right.
\end{equation}
with
$$v(\rho)=\frac{1}{\he(\rho) }.$$
Indeed, under the eikonal constraint $|\nabla \de|=\he(\rho)$, the effective transport velocity
 $$V=v(\rho)^2 \nabla \de$$
 satisfies
 $$|V|=\frac{1}{\he(\rho)}. $$
Hence, setting $\he(\rho)=1/v(\rho)$, where $v(\rho)$ denotes the desired walking speed, one recovers the classical Hughes' relation $|V|=v(\rho)$, with a velocity that decreases as the local density increases.

From a behavioral viewpoint, this formulation is closely related to the original Hughes' model. In both cases, agents determine their trajectories through a potential solving an Eikonal equation, thereby anticipating future congestion and selecting routes that avoid highly crowded regions. The potential $\de$ can then be interpreted as a perceived travel-cost or travel-time function. Congestion influences the dynamics through two complementary mechanisms: it increases the cost of traversing dense regions and simultaneously reduces the effective walking speed.

By contrast, the model considered in the present work corresponds to the simpler choice $v(\rho)\equiv 1$. In this case, the transport velocity is directly proportional to $\nabla\de$, and the Eikonal constraint implies
\[
|V|=|\nabla\de|=\he(\rho).
\]
Therefore, larger densities generate steeper gradients of the strategic potential. Rather than slowing down the agents, congestion amplifies their tendency to reorganize their trajectories and move away from crowded areas. The potential $\de$ should thus be interpreted as a strategic pressure field governing collective redistribution rather than as a travel-time function. While the behavioral interpretation differs from the classical Hughes' framework, the underlying decision mechanism remains similar: agents anticipate congestion through the Eikonal equation and adapt their motion accordingly.

For the sake of clarity and to avoid unnecessary technical complications, we shall restrict ourselves throughout the paper to the case $v(\rho)\equiv 1$. All the analytical arguments developed below can be extended to more general mobility functions $v$, at the expense of dealing with a genuinely nonlinear parabolic-hyperbolic coupling in the first equation.

In summary, the density-dependent velocity field $V=-\nabla\de$ governed by \eqref{eq:V-model} places \eqref{eq:regularized-model} within the family of Hughes-type models, while the constitutive relation $\rho = \beta(p)$ places it in the scope of soft congestion dynamics.

\subsection{Connection with with hard congestion models}\label{sec:p-model}
The choice of the constitutive relation $\rho = \beta(p)$ in \eqref{eq:regularized-model} is pertinent for several reasons. It ensures the continuous dependence of the pressure on density variations. Moreover, it provides a natural bridge towards hard congestion models. Indeed, introducing a scaling parameter $\delta >0$ and choosing a sequence of functions $\beta_\delta$ that converges to $\Signp$ as $\delta\to 0$ (\cf \cref{fig:smooth-approx}),  the system \eqref{eq:regularized-model} can be viewed as a smooth approximation of a hard congestion problem. Hence, \eqref{eq:regularized-model} acts as a natural regularization of the following Hughes-like model 
\begin{equation*}\leqnomode\tag{HC-HM}
\label{evolution223}
	\left\{
	\begin{array}{ll}
		\left. 	\begin{array}{l}
			\partial_t \rho -\dive (\Phi) -\dive(\rho \:  \nabla \de)= f  \\   \\
			\Phi=m \nabla p,  \: \vert \nabla p\vert \leq 1,\:  m(1-\vert \nabla p\vert)=0\\   \\
			\rho \in  \Signp(p),\:  \vert \nabla \de \vert=\he(\rho) \\ \\
		\end{array} \right\}  \quad & \hbox{ in } Q:=[0,T[ \times \Omega,\\  \\
		\Phi \cdot \nu =0 & \hbox{ on }[0,T[ \times\Gamma_N,  \\  \\
		p=0, \:	\de =0    & \hbox{ on }[0,T[ \times\Gamma_D,
	\end{array}
	\right.
\end{equation*}
where the acronym \eqref{evolution223} stands for \emph{Hard Congestion Hughes' Model}. The model \eqref{evolution223} can be seen as a direct generalization of \eqref{evolgran0} where the hard congestion constraint is now coupled to a density-dependent velocity field $V=-\nabla\de$.

From a mathematical perspective, the analysis of \eqref{evolution223} is challenging, mainly due to the singular relation $\rho \in \Signp(p)$ which induces a strong elliptic-parabolic degeneracy. This makes the use of compactness results such as the Aubin-Lions-Simon (\cref{lem:ALS}) fail as they require the graph to be bi-Lipschitz, a property which is not satisfied by $\Signp$. While other nonlinear compactness tools such as the Alt-Luckhaus approach \cite{Alt&Luckhaus} or Kruzhkov's and Ma\^itre's lemmas (see \eg \cite{EM,BA,SK}) have been developed to handle specific regularity requirements on the graph, the \eqref{evolution223} model presents another challenge because of the coupling through the Eikonal equation $\vert \nabla \de \vert = \he(\rho)$ and the advection term $\dive(\rho \nabla \de)$. Thus, passing to the limit in these nonlinear terms would require strong compactness on the variables $\rho$ and $p$. However, the degeneracy of the $\text{Sign}^+$ graph inherently prevents the derivation of such strong estimates. 

To bypass these challenges, we return to the \eqref{eq:regularized-model}. As we shall see, the bi-Lipschitz property of the regularizer $\beta$ provides the necessary strong compactness on the density, which is fundamentally lacking in hard congestion models. This allows us to rigorously pass to the limit in the highly nonlinear cross-coupled terms, namely the advection term $\dive(\rho \nabla \de)$ and the Eikonal equation.
\subsection{Contributions and organization of the paper}
 The main contributions of the paper are: i) the theoretical study of the \eqref{eq:regularized-model}, and ii) the development of a robust numerical framework handling both soft and hard congestion dynamics.

From a theoretical point of view, we establish the rigorous existence of a variational solution to the \eqref{eq:regularized-model} system. We first address the associated stationary problem \eqref{eq:stat-regularized-model}, for which the existence of a variational solution is proven in \cref{prop:existence-stat}. Then, we show in \cref{thm:th2} the existence of a variational solution to the regularized evolution model using a semi-discrete approximation.

From a numerical perspective, we adopt a prediction-correction approach for the proposed model. It is worth mentioning that, while the singular graph $\Signp$ in \eqref{evolution223} induces several theoretical difficulties, it turns out to be highly tractable from a numerical point of view. Indeed, thanks to our primal-dual optimization framework, the multi-valued nature of the $\Signp$ graph is efficiently resolved via straightforward projections. This being said, even though the theoretical results mainly concern the regularized model \eqref{eq:regularized-model}, we still present in \cref{section:4} the splitting approach for both \eqref{evolution223} and \eqref{eq:regularized-model}. Since the approximation of the prediction (or transport) step is standard and is given explicitly by \eqref{t2}, we focus on the correction step by providing the primal-dual iterates in \cref{alg:pd}. Finally, we present several examples in \cref{sm_Hughes} to illustrate our approach.

The rest of the paper is organized as follows. In \cref{section:existence}, we provide the main theoretical results of the paper, namely the well-posedness of the proposed regularized model \eqref{eq:regularized-model}. \cref{section:4} addresses the discretization of the model in both its prediction and correction components. In \cref{sm_Hughes}, we present various numerical results and evacuation scenarios to illustrate the robustness of our approach. Finally, the reader may find in the appendix several technical results used throughout the manuscript.


\section{Existence of a variational solution}\label{section:existence}
\subsection{The stationary problem}
Before presenting the appropriate notion of solution for \eqref{eq:regularized-model}, let us recall a fundamental result providing a variational characterization of the distance function $\de$.

\begin{theorem}[\cite{EnnajiaugmentedLag}]\label{EnnajiaugmentedLag}
    Let $\kk \in C(\overline{\Omega})$ be such that $\kk \ge 0$. Then, the Eikonal equation
    \begin{equation}\label{eq:eikonal-1}
    \left\lbrace\begin{array}{ll}
        \vert \nabla \de \vert = \kk & \hbox{ in }\Om,\\ 
        \de = 0 & \hbox{ on } \Gamma_D,
    \end{array}
    \right.
    \end{equation}
    admits a unique solution $\de \in W_D^{1,\infty}(\Om)$, which can be characterized as 
    \begin{equation}
        \de = \argmax_{z \in W^{1,\infty}(\Om)}\left\{ \int_\Omega z \: \dd x : \vert \nabla z\vert \leq \kk \mbox{ and } z=0 \mbox{ on } \Gamma_D \right\}.
    \end{equation}
\end{theorem}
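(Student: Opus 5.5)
We will work with the weighted geodesic distance to the exit and show that it is, at the same time, the unique viscosity solution of \eqref{eq:eikonal-1} and the unique maximizer of the stated variational problem. The unique solution $\de$ is understood in the viscosity sense; the $W^{1,\infty}$ condition $\vert\nabla \de\vert=\kk$ a.e.\ alone does not determine $\de$. For $x,y\in\overline\Omega$ set
\[
L_\kk(x,y)=\inf\Big\{\int_0^1 \kk(\gamma(t))\vert\dot\gamma(t)\vert\,\dd t:\ \gamma\in W^{1,\infty}(0,1;\overline\Omega),\ \gamma(0)=x,\ \gamma(1)=y\Big\},
\]
and define $\de(x)=\inf_{y\in\Gamma_D}L_\kk(x,y)$. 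We assume $\Omega$ is Lipschitz, so that it is quasi-convex and paths of length comparable to $\vert x-y\vert$ exist. Then the bound $\kk\le\|\kk\|_\infty$ gives $\vert \de(x)-\de(y)\vert\le L_\kk(x,y)\le C\vert x-y\vert$. Hence $\de\in W^{1,\infty}_D(\Omega)$.

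\textbf{Step 1: $\de$ is admissible.} We show $\vert\nabla \de\vert\le \kk$ a.e. The triangle inequality gives $\de(x)-\de(y)\le L_\kk(x,y)$. Near an interior point, continuity of $\kk$ gives $L_\kk(x,y)\le (\kk(x)+o(1))\vert x-y\vert$. At a point of differentiability this yields $\vert\nabla\de(x)\vert\le\kk(x)$.

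\textbf{Step 2: pointwise maximality.} Let $z\in W^{1,\infty}$ satisfy $\vert\nabla z\vert\le\kk$ a.e.\ and $z=0$ on $\Gamma_D$. We claim $z\le \de$. The natural argument integrates along a curve $\gamma$ from $x$ to $y\in\Gamma_D$, giving $z(x)=z(x)-z(y)\le\int_0^1 \kk(\gamma)\vert\dot\gamma\vert$. This is delicate, because $\nabla z$ is only defined a.e.\ and a curve may lie in the null set where it is undefined. We handle this by regularization: mollify $z$ and take a continuous upper envelope of $\kk$ (or use a Fubini argument over a tube of parallel curves) to obtain $z(x)-z(y)\le L_\kk(x,y)$. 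Taking the infimum over $y$ and $\gamma$ gives $z\le\de$. Consequently $\int z\le\int\de$, so $\de$ is a maximizer.

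\textbf{Step 3: uniqueness of the maximizer.} Suppose $z$ is another maximizer. By Step 2, $z\le\de$ and $\int(\de-z)=0$. Continuity then gives $z=\de$.

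\textbf{Step 4: $\de$ solves \eqref{eq:eikonal-1}, and uniqueness.} The dynamic programming principle $\de(x)=\inf_{\gamma}\{\int_0^t\kk(\gamma)\vert\dot\gamma\vert+\de(\gamma(t))\}$, together with Step 1 and continuity of $\kk$, gives the viscosity sub- and supersolution properties. This is the standard optimal-control verification, and it also shows $\vert\nabla\de\vert=\kk$ a.e.\ at differentiability points.

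Uniqueness of the viscosity solution requires more care. When $\kk>0$, standard comparison for the Eikonal equation with Dirichlet data on $\Gamma_D$ applies: any viscosity subsolution $w$ satisfies $w\le\de$ by Step 2, and any supersolution satisfies $w\ge\de$ by a sub-optimality argument along near-optimal curves. When $\kk$ is allowed to vanish, comparison can fail. For instance, where $\kk\equiv0$ on a region away from $\Gamma_D$, a subsolution can be constant there with arbitrary value. In that case we identify $\de$ as the maximal subsolution, which is the characterization the theorem asserts. This degenerate case, together with the a.e.\ integration in Step 2, is the main obstacle. We would first try the viscosity formulation with the maximal-subsolution selection, rather than relying on the pure a.e.\ equation.
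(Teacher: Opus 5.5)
The paper does not prove this statement; it quotes it from \cite{EnnajiaugmentedLag}. The only property used later is that $\de$ is the maximal subsolution, i.e.\ the unique maximizer of $\int_\Omega z$ over admissible $z$ (see \cref{lem:Deps}, where the limit potential is identified solely through the argmax). Your Steps 1--3 prove exactly this by the standard route:
\begin{itemize}
\item $\de$ is defined as the weighted geodesic distance;
\item every a.e.\ subsolution is dominated pointwise by $\de$;
\item the maximizer is unique, since $z\le\de$ and $\int(\de-z)=0$ force $z=\de$.
\end{itemize}
The interior dynamic-programming argument in Step 4, giving the viscosity property in $\Omega$ and $\vert\nabla\de\vert=\kk$ a.e., is also correct. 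You are right that the a.e.\ equation alone does not determine $\de$.

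The genuine error is the uniqueness claim in Step 4 for $\kk>0$. The subsolution half ($w\le\de$ via Step 2) is fine, but no comparison holds for supersolutions, because data are prescribed only on $\Gamma_D$ and nothing is imposed on $\Gamma_N$. Take $\Omega=(0,1)^2$, $\Gamma_D=\{0\}\times[0,1]$ and $\kk\equiv1$. Then $\de(x,y)=x$, yet $w(x,y)=-x$ is a classical, hence viscosity, solution in $W^{1,\infty}_D(\Omega)$. So is $\min(x,2a-x)$ for every $a\in(0,1)$.

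Your ``sub-optimality argument along near-optimal curves'' fails exactly here. The curve along which a supersolution decreases at rate $\kk$ can run into $\Gamma_N$, where $w$ is not controlled. For $w=-x$ it moves in the $+x$ direction and never meets $\Gamma_D$.

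There are two ways to repair this:
\begin{itemize}
\item Add the state-constraint condition on $\Gamma_N$, i.e.\ require the supersolution property on $\Omega\cup\Gamma_N$. Your geodesic distance, built with curves in $\overline\Omega$, does satisfy it (with your Lipschitz assumption on $\Omega$), and it lets that curve be continued along $\Gamma_N$ until it reaches $\Gamma_D$.
\item Read ``unique'' in the statement as uniqueness of the maximizer, which is how the paper uses it. Steps 1--3 then already finish the proof, and the comparison discussion should be dropped.
\end{itemize}

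In your degenerate example, it is solutions, not subsolutions, that lose uniqueness. Subsolutions are never unique, and by Step 2 they are all bounded above by $\de$, so their values on $\{\kk=0\}$ are not arbitrary.
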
	
The mathematical study of \eqref{eq:regularized-model} relies first on the analysis of the corresponding stationary problem:
\begin{equation}\leqnomode
    \tag{$S_{\textup{stat}}$}
    \label{eq:stat-regularized-model}
    \left\{
    \begin{array}{ll}
    \left. \begin{aligned}
        &\rho -\dive (\Phi) -\dive(\rho \nabla \de) = f  \\
        &\Phi = m \nabla p, \quad m\geq0, \quad \vert \nabla p\vert \leq 1, \quad m(1-\vert \nabla p\vert)=0 \\
        &\rho = \beta(p), \quad \vert \nabla \de \vert=\kk 
        \end{aligned} \right\} \quad & \text{in } \Omega, \\[4ex]
        \Phi \cdot \nu = 0 & \text{on } \Gamma_N, \\[1ex]
        p = 0, \quad \de = 0, & \text{on } \Gamma_D.
    \end{array}
    \right.
\end{equation}
where $\kk\in C(\overline{\Omega})$ is a nonnegative function. The justification behind taking $\de$ as a solution to \eqref{eq:eikonal-1} rather than \eqref{eq:V-model} lies in the study of the semi-discrete model \eqref{eq:discrete-regularized-model} we present below. Indeed, in our prediction-correction framework, the potential $\de^{i+1}$ at step $i+1$ is obtained by solving \eqref{eq:eikonal-1} with $\kk = \he(\rho^{i})$, using the density $\rho^{i}$ computed at the previous step $i$.

We summarize the main steps of our theoretical analysis as follows. First, we prove the existence of a variational solution to the stationary problem \eqref{eq:stat-regularized-model} for a given, fixed potential. Then, we rely on a semi-discrete Euler scheme in time to construct a sequence of approximate solutions. By deriving suitable a priori estimates and employing compactness arguments, we pass to the limit as the time step goes to zero, ultimately obtaining a variational solution to the full regularized time-dependent model \eqref{eq:regularized-model}.


\subsection{Study of the stationary problem \eqref{eq:stat-regularized-model}}

As is well known, the sub-gradient constraint operator gives rise to a divergence operator involving measure-valued fluxes (cf. \cite{Ig1, Ig2, DePascale&Jimenez}). Combined with the nonlinear constitutive relation
$\rho=\beta(p)$, this leads to a technically challenging framework in which the effective flux of the equation may exhibit singular behavior. Such difficulties would considerably increase the mathematical complexity of the presentation and potentially divert the reader from the main objective of the present work.

Consequently, throughout this paper, we adopt the notion of variational solutions. Although an equivalence with weak solutions involving the effective PDE flux is expected, this issue remains an open problem; we refer the reader to \cite{IgUrbano} for further discussions. In what follows, we first introduce the definition in the stationary setting, while the corresponding notion for the evolution problem will be presented in the next section. This proposed framework naturally builds upon the approaches developed in \cite{MCIg, IgUrbano}.
\begin{definition}[Variational solution]\label{def:var-sol}
     Given $f \in \X:=L^{1}(\Om)$ and a fixed potential $\de$, a variational solution to \eqref{eq:stat-regularized-model} is a pair $(\rho,p) \in \X \times \K$ such that $\rho = \beta(p)$ almost everywhere, and which satisfies the following variational inequality for any test function $\xi \in \K$:
    \begin{equation}
        \int_{\Om} \rho (p-\xi) \dd x + \int_{\Omega} \rho\nabla\de\cdot\nabla(p-\xi)\dd x \leq \int_{\Om} f(p-\xi)\dd x.
    \end{equation}
\end{definition}
In the following result, we establish the existence of a variational solution to \eqref{eq:stat-regularized-model} and prove its continuous dependence on the source term $f$.
\begin{proposition}[Existence and $L^1$-contraction]\label{prop:existence-stat}
     Given $f \in \X$ and a potential $\de$, there exists a variational solution $(\rho,p)$ to \eqref{eq:stat-regularized-model}. Moreover, if $(\rho_1, p_1)$ and $(\rho_2, p_2)$ are variational solutions corresponding to source terms $f_1, f_2 \in \X$, respectively, then the comparison principle 
    \begin{equation}\label{eq:est-1}
        \Vert (\rho_1 - \rho_2)^{+} \Vert_{L^1} \leq \Vert (f_1 - f_2)^{+} \Vert_{L^1}
    \end{equation}
    holds, and consequently, we have the $L^1$-contraction estimate
    \begin{equation}\label{eq:est-2}
        \Vert \rho_1 - \rho_2 \Vert_{L^1} \leq \Vert f_1 - f_2 \Vert_{L^1}.
    \end{equation}
\end{proposition}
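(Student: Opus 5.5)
The plan is to obtain existence by a Schauder fixed-point argument that freezes the density in the convection term, and to prove the comparison principle by a Kato-type doubling of variables, using truncations that stay in the constraint set. Throughout, $\K=\{z\in W^{1,\infty}(\Om): \vert\nabla z\vert\le 1,\ z=0 \text{ on }\Gamma_D\}$ and $\de$ is the solution of \eqref{eq:eikonal-1} given by \cref{EnnajiaugmentedLag}. In particular $\nabla\de\in L^\infty$ with $\vert\nabla \de\vert\le \Vert\kk\Vert_\infty$.

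\textbf{Existence.} The set $\K$ is convex, and its functions satisfy $\vert z(x)\vert\le \diam(\Om)$ (using $z=0$ on $\Gamma_D$, for a connected Lipschitz domain). By Arzelà–Ascoli, $\K$ is therefore a compact convex subset of $C(\overline\Om)$. Let $B(r)=\int_0^r\beta$, which is convex and, since $\beta$ is bi-Lipschitz on compact sets, strictly convex there.

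1. Fix $q\in\K$ and define $p=S(q)$ as the minimizer over $\K$ of
\[
J_q(z)=\int_\Om B(z)\,\dd x+\int_\Om \beta(q)\nabla\de\cdot\nabla z\,\dd x-\int_\Om f z\,\dd x .
\]
2. Each term is well defined: $f\in L^1$ pairs with $z\in L^\infty$, and $\beta(q)\nabla\de\in L^\infty$. The functional $J_q$ is weak-$*$ lower semicontinuous on $\K$, since the first term is continuous under uniform convergence and the others are linear. Hence a minimizer exists, and it is unique by strict convexity of $B$ on the relevant bounded range.
3. The first-order optimality condition gives, for all $\xi\in\K$,
\[
\int \beta(p)(p-\xi)+\int\beta(q)\nabla\de\cdot\nabla(p-\xi)\le\int f(p-\xi).
\]
4. $S$ is continuous from $\K$ into $\K$ for the uniform topology. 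If $q_n\to q$ uniformly, then $\beta(q_n)\to\beta(q)$ uniformly, so $J_{q_n}\to J_q$ uniformly on $\K$. Any uniform limit point of $S(q_n)$ is then a minimizer of $J_q$, and uniqueness identifies it as $S(q)$.
5. Schauder's theorem yields a fixed point $p=S(p)$. Setting $\rho=\beta(p)$ gives exactly \cref{def:var-sol}.

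\textbf{Comparison principle.} Let $w=p_1-p_2$ and fix $\eps>0$. The lattice property of $\K$ (maxima and minima of elements of $\K$ stay in $\K$) gives two admissible test functions:
\[
\xi_1=\max(\min(p_1,p_2),\,p_1-\eps)\in\K,\qquad \xi_2=\min(\max(p_2,p_1),\,p_2+\eps)\in\K.
\]
These satisfy $p_1-\xi_1=T_\eps(w)$ and $p_2-\xi_2=-T_\eps(w)$, where $T_\eps(s)=\min(s^+,\eps)$.

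1. Plug $\xi_1$ into the inequality for $(\rho_1,p_1)$ and $\xi_2$ into the one for $(\rho_2,p_2)$, add them, and divide by $\eps$:
\[
\int(\rho_1-\rho_2)\tfrac{T_\eps(w)}{\eps}+\frac1\eps\int_{\{0<w<\eps\}}(\rho_1-\rho_2)\nabla\de\cdot\nabla w\le\int(f_1-f_2)\tfrac{T_\eps(w)}{\eps}.
\]
2. The key estimate is the convection term. On $\{0<w<\eps\}$ we have $0\le\rho_1-\rho_2\le L\eps$, with $L$ the Lipschitz constant of $\beta$ on the relevant bounded range, and $\vert\nabla w\vert\le 2$. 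Hence this term is bounded in absolute value by $2L\Vert\kk\Vert_\infty\,\vert\{0<w<\eps\}\vert$, which tends to $0$ as $\eps\to0$.
3. Since $T_\eps(w)/\eps\to \mathbf 1_{\{w>0\}}$ pointwise and is bounded by $1$, dominated convergence gives
\[
\int(\rho_1-\rho_2)\mathbf 1_{\{p_1>p_2\}}\le\int (f_1-f_2)^+ .
\]
4. Because $\beta$ is strictly increasing, $\{\rho_1>\rho_2\}=\{p_1>p_2\}$. The left-hand side is therefore $\Vert(\rho_1-\rho_2)^+\Vert_{L^1}$, which is \eqref{eq:est-1}.
5. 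Exchanging the roles of the two solutions and adding the two inequalities gives \eqref{eq:est-2}.

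\textbf{Expected obstacle.} The delicate point is the convection term in step 2 of the comparison argument: it has no sign, and it must be absorbed. This is exactly where the local Lipschitz bound on $\beta$ is used. If a sharper form were needed, I would instead work with the shifted level sets $\{h<w<h+\eps\}$ and send $\eps\to0$ before $h\to0$.
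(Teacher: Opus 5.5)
Your proof is correct, and the comparison part coincides with the paper's. Your $\xi_1=\max(\min(p_1,p_2),p_1-\eps)$ and $\xi_2=\min(\max(p_2,p_1),p_2+\eps)$ are exactly the paper's $p_1-T_{\eps}^{+}(p_1-p_2)$ and $p_2+T_{\eps}^{+}(p_1-p_2)$. The convection term is handled the same way, through the local Lipschitz bound on $\beta$ on $\{0<w<\eps\}$. Your lattice form has the merit of making $\xi_1,\xi_2\in\K$ evident, which the paper never checks, and you correctly keep $\int(f_1-f_2)\mathbf 1_{\{w>0\}}\le\int(f_1-f_2)^+$ as an inequality rather than a limit.

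The existence part follows a genuinely different route. The paper works in the reflexive space $W^{1,s}_D(\Om)$ with $s$ larger than the dimension. It checks that $u\mapsto\beta(u)+\text{convection}$ is pseudo-monotone, using the compact embedding into $C(\overline\Om)$, and then applies Lions' existence theorem for variational inequalities on a bounded closed convex set in one step. You instead freeze the density in the convection term, so the frozen problem is the Euler--Lagrange inequality of a convex functional on $\K$, solved by the direct method. You then close the loop with Schauder's theorem in $C(\overline\Om)$, where $\K$ is compact by Arzel\`a--Ascoli.

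The compactness behind both arguments is the same. Your route is more elementary, avoids the pseudo-monotone framework and the auxiliary exponent $s$, and exploits the convex structure of the frozen problem. Its cost is that continuity of $S$ rests on uniqueness of the frozen minimizer. That requires strict convexity of $B$, i.e.\ strict monotonicity of $\beta$, which is available here from the bi-Lipschitz assumption. The paper's existence argument only uses continuity of $\beta$; without uniqueness you would need a set-valued fixed-point theorem such as Kakutani's.

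A minor correction: for non-convex $\Om$, the uniform bound on $\K$ is the intrinsic (geodesic) diameter rather than $\diam(\Om)$. It is still finite for a bounded Lipschitz domain, so the argument is unaffected.
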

\begin{proof}
    The existence of a variational solution is a direct consequence of \cref{thm:app-thm1}. Indeed, since by \cref{lem:app-lem-1} the operator $\T:\W\to\V^{*}$ defined by \eqref{eq:op-T} is pseudo-monotone with $\V = W^{1,s}_{D}(\Om)$ and $\W=\K$, we deduce the existence of $p\in\K$ such that
    \[
    \langle\T(p),p-\xi\rangle \leq \langle f,p-\xi\rangle, \quad \text{for all } \xi\in \K.
    \]
    This yields
    \begin{equation}
        \int_{\Om}\rho (p-\xi) \dd x + \int_{\Omega} \rho\nabla\de\cdot\nabla(p-\xi)\dd x \leq \int_{\Om} f(p-\xi)\dd x,
    \end{equation}
    with $\rho = \beta(p)$, \ie $(\rho,p)$ is a variational solution to \eqref{eq:stat-regularized-model} in the sense of \cref{def:var-sol}.
	
    Now let us prove the comparison principle \eqref{eq:est-1}. Let $(\rho_1, p_1)$ and $(\rho_2, p_2)$ be two variational solutions, corresponding to the source terms $f_1$ and $f_2$, respectively. This means that for $i=1,2$, we have
    \begin{equation}
        \label{eq:ineq-1}
        \int_{\Om}\rho_i (p_i-\xi_i) \dd x + \int_{\Omega} \rho_i\nabla\de\cdot\nabla(p_i-\xi_i)\dd x \leq \int_{\Om} f_i(p_i-\xi_i)\dd x,
    \end{equation}
    for any test function $\xi_i\in\K$. Next, let us define the standard truncation operator at level $k>0$,
    \begin{equation}
        T_{k}^{+}(r) = 
        \begin{cases}
            0 & \text{if } r < 0,\\
            r & \text{if } 0 \leq r \leq k,\\
            k & \text{if } r > k,
        \end{cases}
    \end{equation}
    and consider the test functions
    \[
        \xi_{1} = p_1 - T_{k}^{+}(p_1 - p_2) \quad \text{and} \quad \xi_{2} = p_2 + T_{k}^{+}(p_1-p_2).
    \]
    Using $\xi_1$ and $\xi_2$ as test functions in \eqref{eq:ineq-1} for $i=1$ and $i=2$, respectively, we obtain  
       \begin{equation}
        \label{eq:ineq-2}
        \int_{\Om}\rho_1 T_{k}^{+}(p_1-p_2) \dd x + \int_{\Omega} \rho_1\nabla\de\cdot\nabla T_{k}^{+}(p_1-p_2)\dd x \leq \int_{\Om} f_1 T_{k}^{+}(p_1-p_2)\dd x,
    \end{equation}
    and
    \begin{equation}
        \label{eq:ineq-3}
        -\int_{\Om}\rho_2 T_{k}^{+}(p_1-p_2) \dd x - \int_{\Omega} \rho_2\nabla\de\cdot\nabla T_{k}^{+}(p_1-p_2)\dd x \leq -\int_{\Om} f_2 T_{k}^{+}(p_1-p_2)\dd x.
    \end{equation}
    Summing inequalities \eqref{eq:ineq-2} and \eqref{eq:ineq-3} and dividing by $k>0$, we get	
    \begin{equation}
        \label{eq:ineq-4}
        \int_{\Om}(\rho_1-\rho_2) \frac{T_{k}^{+}(p_1-p_2)}{k} \dd x + \int_{\Omega} \frac{(\rho_1 - \rho_2)}{k}\nabla\de\cdot\nabla T_{k}^{+}(p_1-p_2)\dd x \leq \int_{\Om} (f_1-f_2)\frac{ T_{k}^{+}(p_1-p_2)}{k}\dd x.
    \end{equation}
    Since $\beta$ is nondecreasing, $\rho_1 \geq \rho_2$ whenever $p_1 \geq p_2$. Thus, by Lebesgue's dominated convergence theorem, as $k \to 0$, we have
    \[
        \int_{\Om}(\rho_1-\rho_2) \frac{T_{k}^{+}(p_1-p_2)}{k} \dd x \longrightarrow \int_{\Om}(\rho_1-\rho_2)^{+}\dd x,
    \]
    and 
    \[
        \int_{\Om} (f_1-f_2)\frac{ T_{k}^{+}(p_1-p_2)}{k}\dd x \longrightarrow   \int_{\Om} (f_1-f_2)^{+}\dd x.
    \]
To treat the remaining gradient term, we first note that the test functions $p_1, p_2 \in \K$ satisfy $\vert \nabla p_i \vert \leq 1$ almost everywhere and vanish on $\Gamma_D$. Since $\Omega$ is bounded, then $p_1,p_2\in L^\infty(\Omega)$. Therefore, there exists $M > 0$ such that $p_1(x), p_2(x) \in [-M, M]$, \pp $x \in \Omega$.

From our assumptions, $\beta$ is Lipschitz continuous on compact sets. Thus, there exists a local Lipschitz constant $L_M > 0$ such that
    \[
        \vert \rho_1 - \rho_2\vert = \vert \beta(p_1) - \beta(p_2)\vert \leq L_M \vert p_1 - p_2\vert, \quad \text{a.e. in } \Omega.
    \]
Consequently, we get, using the fact that $(T_k^+)' = 1$ on the set $\{0 < p_1 - p_2 < k\}$,
    \begin{equation}
        \begin{aligned}
            \left\vert \int_{\Omega} \frac{(\rho_1 - \rho_2)}{k}\nabla\de\cdot\nabla T_{k}^{+}(p_1-p_2)\dd x \right\vert &\leq L_M \int_{\Omega} \left\vert \frac{(p_1 - p_2)}{k}\nabla\de\cdot\nabla T_{k}^{+}(p_1-p_2)\right\vert \dd x\\
            &\leq L_M \int_{\Omega\cap\{0 < p_1-p_2 < k\}} \left\vert \frac{(p_1 - p_2)}{k} \nabla\de\cdot\nabla (p_1-p_2)\right\vert \dd x\\
            &\leq L_M \int_{\Omega\cap\{0 < p_1-p_2 < k\}} \left\vert\nabla\de\cdot\nabla (p_1-p_2)\right\vert \dd x.
        \end{aligned}
    \end{equation}
   Since $\nabla \de$ and $\nabla(p_1-p_2)$ are uniformly bounded in $L^\infty(\Omega)$, Lebesgue's dominated convergence theorem ensures that
    \[
        \left\vert \int_{\Omega} \frac{(\rho_1 - \rho_2)}{k}\nabla\de\cdot\nabla T_{k}^{+}(p_1-p_2)\dd x\right\vert \longrightarrow 0 \quad \text{as } k\to 0.
    \]
    Passing to the limit $k \to 0$ in \eqref{eq:ineq-4} finally yields
    \[
        \int_{\Om}(\rho_1-\rho_2)^{+}\dd x \leq \int_{\Om} (f_1-f_2)^{+}\dd x,
    \]
as desired.
\end{proof}
\subsection{Study of \eqref{eq:regularized-model}}

We now turn to the full time-dependent problem \eqref{eq:regularized-model}. Let us first define the notion of a variational solution for this evolution equation.

\begin{definition}[Variational solution of the evolution problem]\label{def:var-sol-regularized-model}
    Given an initial datum $\rho_0\in L^{1}(\Om)$ and a source terme $f\in L^{1}(0,T;\Lip_{D}^{*}(\Om))$, a variational solution to \eqref{eq:regularized-model} is a triplet $(\rho,p,\de)$ such that
    \begin{itemize}
    	\item $\rho \in L^1(0,T; C(\Omega)) \cap L^\infty(Q)$,   $p(t, \cdot) \in \K$ for all $t \in [0,T]$, satisfying $\rho = \beta(p)$ almost everywhere in $\Omega$,
    	\item $\de\in L^{s}(0,T;W^{1,s}(\Om))$ is such that for \pp $t\in (0,T), \de(t,.)$ solves \eqref{eq:V-model}.
    	\end{itemize}
    	Furthermore, for any test function $\xi \in \K$, any $k > 0$, and any non-negative test function $\sigma \in C^{\infty}_{c}([0,T))$, the following inequality holds:
    \begin{equation}
        \begin{aligned}
            &-\int_{0}^{T} \int_{\Om} \partial_{t}\sigma(t) \left( \int_{0}^{\rho(t,x)} T_{k}(\beta(s)-\xi)\dd s \right) \dd x \dd t - \sigma(0)\int_{\Om}\int_{0}^{\rho_0(x)} T_{k}(\beta(s)-\xi)\dd s\dd x\\
            &+ \int_{0}^{T} \int_{\Omega} \rho \nabla\de \cdot \nabla T_k(p-\xi) \sigma(t) \dd x \dd t 
            \leq \int_{0}^{T} \int_{\Om} f T_{k}(p-\xi) \sigma(t) \dd x \dd t,
        \end{aligned}
    \end{equation}
    where $T_k(z) = \min(k, \max(z,-k))$ is the standard truncation function at level $k$.
\end{definition}

Using an implicit Euler scheme in time, we consider the following semi-discrete version of \eqref{eq:regularized-model}:
\begin{equation}\leqnomode
    \tag{$S_{\beta,\epsilon}$}
    \label{eq:discrete-regularized-model}
    \left\{
    \begin{array}{ll}
        \left. \begin{aligned}
            &\frac{\rho^{i+1}-\rho^{i}}{\epsilon} -\dive (\Phi^{i+1}) -\dive(\rho^{i+1} \nabla \de^{i+1}) = f^{i+1} \\
            &\Phi^{i+1}=m \nabla p^{i+1}, \quad m\geq0, \quad \vert \nabla p^{i+1}\vert \leq 1, \quad m(1-\vert \nabla p^{i+1}\vert)=0\\
            &\rho^{i+1}=\beta(p^{i+1}), \quad \vert \nabla \de^{i+1}\vert =\he(\rho^{i})
        \end{aligned} \right\} \quad & \text{in } \Omega, \\[4ex]
        \Phi^{i+1} \cdot \nu = 0 & \text{on } \Gamma_N, \\[1ex]
        p^{i+1} = 0, \quad \de^{i+1} = 0 & \text{on } \Gamma_D,
    \end{array}
    \right.
\end{equation}
where the sequence $\{f^i\}_{i=1}^n \subset \Lip_D^*(\Om)$ is chosen such that 	
\begin{equation}
    \sum_{i=1}^{n}\int_{t_{i-1}}^{t_{i}} \Vert f(t)-f^{i} \Vert_{\Lip_D^*} \: \dd t \le \epsilon.
\end{equation}	   
Owing to \cref{prop:existence-stat}, we deduce the existence of a sequence of solutions 
\[
(\rho^{i+1}, p^{i+1}, \de^{i+1}) \in L^{\infty}(\Om) \times \K \times W^{1,\infty}(\Om),
\]
to the semi-discrete problem \eqref{eq:discrete-regularized-model}. At each time step, these updated variables satisfy the variational inequality
\begin{equation}\label{eq:evol_i}
    \int_{\Omega} \frac{\rho^{i+1}-\rho^{i}}{\epsilon} T_k (p^{i+1}-\xi) \dd x + \int_{\Omega} \rho^{i+1} \nabla \de^{i+1} \cdot \nabla T_k(p^{i+1}-\xi) \dd x \le \int_{\Omega} f^{i+1} T_k(p^{i+1}-\xi) \dd x,
\end{equation}
for all test functions $\xi \in \K$ and any $k>0$.

 To establish the existence of a variational solution to  \eqref{eq:regularized-model}, we construct a sequence of approximate solutions using the semi-discrete scheme \eqref{eq:discrete-regularized-model}. Given a time step $\epsilon>$, we define the piecewise constant interpolants $(\rho^\epsilon,p^\epsilon,\de^\epsilon)$ and the piecewise linear interpolant function $\tilde{\rho}^\epsilon$ as follows.
\begin{definition}\label{def:interpolants}
Given $\epsilon >0$, we define, for $t \in (t_i, t_{i+1}]$, and $i = 0, \dots, n-1,$
\begin{equation}\label{eq:approx-sol}
	    \rho^\epsilon(t) = \rho^{i+1}, \quad p^\epsilon(t) = p^{i+1}, \quad \text{and} \quad \de^\epsilon(t) = \de^{i+1},
\end{equation}
with the initial data defined at $t=0$ as $\rho^\epsilon(0) = \rho^0$, $p^\epsilon(0) = p^0$, and $\de^\epsilon(0) = \de^0$. 
Moreover, we define the piecewise linear interpolant function $\tilde{\rho}^\epsilon$ as
\begin{equation}\label{eq:tilde-rho}
    \tilde{\rho}^\epsilon(t) = \frac{t - t_i}{\epsilon}\rho^{i+1} + \frac{t_{i+1} - t}{\epsilon}\rho^{i}, \quad \text{for } t \in (t_i, t_{i+1}].
\end{equation}
\end{definition}

By construction, these approximate solutions satisfy the following fundamental discrete inequality that will play an important role in the sequel.
\begin{lemma}\label{lem:d-ineq}
	For all $\epsilon>0$, any test function $\xi\in\K$ and any $k>0$, the approximate solutions defined in \cref{def:interpolants} satisfy the following inequality for \pp $t\in (0,T)$:
	\begin{equation}\label{eq:final-discrete-ineq}
    \int_{\Omega} \partial_t \tilde{\rho}^{\epsilon} T_k(\beta^{-1}(\tilde{\rho}^{\epsilon})-\xi) \dd x + \int_{\Omega} \rho^{\epsilon} \nabla \de^{\epsilon} \cdot \nabla T_k(p^{\epsilon}-\xi) \dd x \le \int_{\Omega} f^{\epsilon} T_k(p^{\epsilon}-\xi) \dd x.
\end{equation}
\end{lemma}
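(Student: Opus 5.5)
The plan is to reduce \eqref{eq:final-discrete-ineq} on each time slab to the discrete inequality \eqref{eq:evol_i}. Both sides differ only in the first term, and that discrepancy can be handled by a pointwise monotonicity argument. Fix $i\in\{0,\dots,n-1\}$ and $t\in(t_i,t_{i+1})$. Throughout, $f^\epsilon$ denotes the piecewise constant interpolant $f^\epsilon(t)=f^{i+1}$ on $(t_i,t_{i+1}]$, in the same way as in \cref{def:interpolants}.

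\textbf{Step 1: identify the terms on a slab.} By \eqref{eq:approx-sol} and \eqref{eq:tilde-rho} we have $\rho^\epsilon(t)=\rho^{i+1}$, $p^\epsilon(t)=p^{i+1}$, $\de^\epsilon(t)=\de^{i+1}$ and $\partial_t\tilde\rho^\epsilon(t)=(\rho^{i+1}-\rho^i)/\epsilon$. Hence the second and third terms of \eqref{eq:final-discrete-ineq} coincide with those of \eqref{eq:evol_i}. By \eqref{eq:evol_i} it therefore suffices to prove, pointwise a.e.\ in $\Omega$,
\[
(\rho^{i+1}-\rho^{i})\Big[T_k\big(\beta^{-1}(\tilde\rho^\epsilon(t))-\xi\big)-T_k\big(p^{i+1}-\xi\big)\Big]\le 0 .
\]
Integrating this over $\Omega$ and dividing by $\epsilon$ then gives $\int_\Omega \partial_t\tilde\rho^\epsilon\, T_k(\beta^{-1}(\tilde\rho^\epsilon)-\xi)\le \int_\Omega \frac{\rho^{i+1}-\rho^i}{\epsilon}T_k(p^{i+1}-\xi)$, and the claim follows from \eqref{eq:evol_i}.

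\textbf{Step 2: make sense of $\beta^{-1}(\tilde\rho^\epsilon)$.} Since $\beta$ is nondecreasing and bi-Lipschitz on compact sets, it is strictly increasing and continuous. Its image is an interval and $\beta^{-1}$ is a nondecreasing function on it. The value $\tilde\rho^\epsilon(t,x)$ is a convex combination of $\rho^i(x)=\beta(p^i(x))$ and $\rho^{i+1}(x)=\beta(p^{i+1}(x))$, so it lies in $\im(\beta)$. (For $i=0$ we use $\rho^0=\beta(p^0)$.) Thus $\beta^{-1}(\tilde\rho^\epsilon)$ is well defined and lies between $p^i$ and $p^{i+1}$. It is not required to belong to $\K$, since it only appears inside $T_k$ against the time derivative.

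\textbf{Step 3: sign argument.} At a point where $\rho^{i+1}\ge\rho^i$, we have $\tilde\rho^\epsilon(t)\le\rho^{i+1}$. Hence $\beta^{-1}(\tilde\rho^\epsilon)\le\beta^{-1}(\rho^{i+1})=p^{i+1}$, and since $T_k$ is nondecreasing the bracket is $\le0$, so the product is $\le0$. At a point where $\rho^{i+1}<\rho^i$, all inequalities reverse: the bracket is $\ge0$ and the prefactor is negative. This proves the pointwise inequality of Step 1 and hence the lemma for a.e.\ $t$; the grid points $t_i$ form a null set.

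The only delicate point is Step 2, namely the well-definedness and monotonicity of $\beta^{-1}$ along the linear interpolant. This relies on strict monotonicity of $\beta$, which follows from the bi-Lipschitz assumption, and on convexity of $\im(\beta)$. Everything else is bookkeeping.
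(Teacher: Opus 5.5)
Your proposal is correct and follows the paper's argument. The paper also reduces to \eqref{eq:evol_i} on each slab, then uses $\partial_t\tilde\rho^\epsilon(\rho^\epsilon-\tilde\rho^\epsilon)\ge 0$ together with the monotonicity of $r\mapsto T_k(\beta^{-1}(r)-\xi)$; this is exactly your case split on the sign of $\rho^{i+1}-\rho^i$, and your Step 2 (well-definedness of $\beta^{-1}(\tilde\rho^\epsilon)$ via injectivity of $\beta$ and the interval image) spells out a point the paper leaves implicit.
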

\begin{proof}
	By definition, $\partial_t \tilde{\rho}^\epsilon = \frac{\rho^{i+1}-\rho^{i}}{\epsilon}$. Thanks to \eqref{eq:evol_i}, we have, for all $t \in (t_i, t_{i+1}]$,
\begin{equation}\label{eq:evol_e}
    \int_{\Omega} \partial_t \tilde{\rho}^{\epsilon} T_k (p^\epsilon-\xi) \dd x + \int_{\Omega} \rho^\epsilon \nabla \de^\epsilon \cdot \nabla T_k(p^\epsilon-\xi) \dd x \le \int_{\Omega} f^\epsilon T_k(p^\epsilon-\xi) \dd x, \quad \text{for all } \xi \in \K.
\end{equation}

We now seek to properly bound from below the first term on the left-hand side of \eqref{eq:evol_e}. Since $p^\epsilon = \beta^{-1}(\rho^\epsilon)$, the time derivative term can be written as $\partial_t \tilde{\rho}^{\epsilon} T_k (\beta^{-1}(\rho^\epsilon)-\xi)$. Notice that for any $t \in (t_i, t_{i+1}]$, we have
\[
    \partial_t\tilde{\rho}^{\epsilon}(\rho^\epsilon-\tilde{\rho}^\epsilon) = \frac{(\rho^{i+1}-\rho^{i})^2}{\epsilon^2}(t_{i+1}-t) \geq 0.
\]
Since the function $\beta^{-1}$ is nondecreasing, the terms $(\rho^\epsilon-\tilde{\rho}^\epsilon)$ and $(\beta^{-1}(\rho^\epsilon)-\beta^{-1}(\tilde{\rho}^\epsilon))$ have the same sign. It follows that 
\[
    \int_{\Omega} \partial_t\tilde{\rho}^{\epsilon} \left( \beta^{-1}(\rho^\epsilon) - \beta^{-1}(\tilde{\rho}^\epsilon) \right) \dd x \geq 0.
\]
Since $T_k$ is a nondecreasing function, the composition $r \mapsto T_k(\beta^{-1}(r) - \xi)$ is also nondecreasing. Therefore, the difference $\left( T_k(\beta^{-1}(\rho^\epsilon) - \xi) - T_k(\beta^{-1}(\tilde{\rho}^\epsilon) - \xi)\right)$ shares the same sign as $(\beta^{-1}(\rho^\epsilon)-\beta^{-1}(\tilde{\rho}^\epsilon))$. This yields:
\[
    \int_{\Omega} \partial_t \tilde{\rho}^\epsilon \left( T_k(\beta^{-1}(\rho^\epsilon) - \xi) - T_k(\beta^{-1}(\tilde{\rho}^\epsilon) - \xi)\right)\dd x \geq 0.
\]
Combining this with \eqref{eq:evol_e}, we obtain the key discrete inequality \eqref{eq:final-discrete-ineq}.
\end{proof}
In what follows, we prove that the sequences $(\rho^\epsilon)_{\epsilon}$, $(p^\epsilon)_{\epsilon}$, and $(\de^\epsilon)_{\epsilon}$ converge respectively to $\rho$, $p$, and $\de$, where the triplet $(\rho,p,\de)$ is a variational solution to problem \eqref{eq:regularized-model} in the sense of \cref{def:var-sol-regularized-model}.
\begin{theorem}\label{thm:th2}
     Given an initial datum $\rho_0\in L^{1}(\Om)$ and any source terme $f\in L^{1}(0,T;\Lip_{D}^{*}(\Om))$, the regularized problem \eqref{eq:regularized-model} admits a variational solution $(\rho, p, \de)$ in the sense of \cref{def:var-sol-regularized-model}. More precisely, $\rho \in L^1(0,T; C(\Omega)) \cap L^\infty(Q)$, $p \in L^\infty(0,T; \K)$, with $\rho = \beta(p)$ almost everywhere, and $\de \in L^s(0,T; W^{1,s}(\Omega))$. 
    Here, $\de(t, \cdot)$ is the maximal subsolution to the Eikonal equation
    \begin{equation}\label{eq:eikonal}
        \begin{cases}
            \vert \nabla \de \vert = \he(\rho) & \text{in } \Om,\\
            \de = 0 & \text{on } \Gamma_D,
        \end{cases}
    \end{equation}
    as characterized in \cref{EnnajiaugmentedLag}. Furthermore, for any test function $\xi \in \K$, any $k > 0$, and any non-negative test function $\sigma \in C^{\infty}_{c}([0,T))$, the following inequality holds:
    \begin{equation}\label{eq:evol1}
        \begin{aligned}
            &-\int_{0}^{T} \int_{\Omega} \partial_t \sigma(t) \left( \int_{0}^{\rho(t,x)} T_k(\beta^{-1}(s)-\xi) \dd s \right) \dd x \dd t - \sigma(0) \int_{\Omega}\int_{0}^{\rho_0(x)} T_k(\beta^{-1}(s)-\xi) \dd s \dd x\\
            &+ \int_{0}^{T} \int_{\Omega} \rho \nabla \de \cdot \nabla T_k(p-\xi) \sigma(t) \dd x \dd t \le \int_{0}^{T} \int_{\Om} f T_k(p-\xi) \sigma(t) \dd x \dd t.
        \end{aligned}
    \end{equation}
\end{theorem}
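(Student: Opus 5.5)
The plan is a standard semi-discrete compactness argument built on \cref{lem:d-ineq}. We take the interpolants of \cref{def:interpolants}, derive uniform a priori estimates, extract convergent subsequences, and pass to the limit $\epsilon\to0$ in \eqref{eq:final-discrete-ineq} multiplied by $\sigma(t)\ge0$ and integrated in time. The limit should be exactly \eqref{eq:evol1}.

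\textbf{Step 1: a priori bounds.} Since $p^{i}\in\K$, we have $|\nabla p^i|\le1$ and $p^i=0$ on $\Gamma_D$. Boundedness of $\Om$ then gives $\|p^\epsilon\|_{L^\infty}\le M:=\diam(\Om)$, uniformly in $\epsilon$ and $i$. Hence $\rho^\epsilon=\beta(p^\epsilon)$ and $\tilde\rho^\epsilon$ take values in the compact set $\beta([-M,M])$. This yields the $L^\infty(Q)$ bound, and also a uniform Lipschitz bound in $x$ for $\rho^\epsilon$, since $\beta$ is Lipschitz on $[-M,M]$.

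Consequently $\he(\rho^{i})$ lies in a compact range of $(0,\infty)$. By \cref{EnnajiaugmentedLag}, each $\de^{i+1}$ is the maximal subsolution with $|\nabla\de^{i+1}|\le \|\he\|_{L^\infty(\beta([-M,M]))}$, so $\de^\epsilon$ is bounded in $L^\infty(0,T;W^{1,\infty})$.

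For time compactness we estimate $\partial_t\tilde\rho^\epsilon$ in $L^1(0,T;\Lip_D^*)$ by duality from the scheme. The transport term is bounded because $\rho^{i+1}\nabla\de^{i+1}$ is uniformly bounded in $L^\infty$. The correction term $\dive\Phi$ is only accessible through the variational inequality, so the intended route is an $L^1$-type estimate instead: test \eqref{eq:evol_i} with $\xi=p^{i+1}\mp$ a scaled Lipschitz function.

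With equicontinuity in $x$ and a weak time-derivative bound in hand, Aubin--Lions--Simon (\cref{lem:ALS}) with the compact embedding $W^{1,\infty}\hookrightarrow C(\overline\Om)$ gives $\tilde\rho^\epsilon\to\rho$ strongly in $L^1(0,T;C(\overline\Om))$. We also check $\|\tilde\rho^\epsilon-\rho^\epsilon\|_{L^1(Q)}\to0$. Since $\beta^{-1}$ is Lipschitz on compacts (bi-Lipschitz assumption), $p^\epsilon\to p=\beta^{-1}(\rho)$ strongly, and $\nabla p^\epsilon\rightharpoonup\nabla p$ weakly-$*$. Closedness of $\K$ gives $p(t)\in\K$.

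\textbf{Step 2: the Eikonal limit, which I expect to be the main obstacle.} The argument is $\rho^{\epsilon}(\cdot-\epsilon)\to\rho$ uniformly in $x$ for a.e. $t$, along a subsequence. Therefore $\he(\rho^{i})\to\he(\rho)$ uniformly, and stability of the maximal subsolution under uniform convergence of the cost (via the $\argmax$ characterization, or viscosity stability) gives $\de^\epsilon(t)\to\de(t)$ uniformly, with $\de(t)$ solving \eqref{eq:eikonal}.

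Weak-$*$ convergence of $\nabla\de^\epsilon$ alone will not suffice. The term $\rho^\epsilon\nabla\de^\epsilon\cdot\nabla T_k(p^\epsilon-\xi)$ is a product of two merely weakly converging gradients, so we need strong convergence of one of them. The first idea is to show $\nabla\de^\epsilon\to\nabla\de$ strongly in $L^s$. Weak convergence gives $\liminf\int|\nabla \de^\epsilon|^2\ge\int|\nabla\de|^2$, while the Eikonal relation gives $\int|\nabla\de^\epsilon|^2=\int\he(\rho^{\epsilon}(\cdot-\epsilon))^2\to\int\he(\rho)^2=\int|\nabla\de|^2$. Norm convergence plus weak convergence then yields strong convergence, using that the maximal subsolution saturates $|\nabla\de|=\kk$ a.e.

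\textbf{Step 3: passing to the limit.} For the time term, integrate by parts in $t$ against $\sigma$. We use $\partial_t\tilde\rho^\epsilon\, T_k(\beta^{-1}(\tilde\rho^\epsilon)-\xi)=\partial_t\Psi(\tilde\rho^\epsilon)$ with $\Psi(r)=\int_0^r T_k(\beta^{-1}(s)-\xi)\,\dd s$. This produces the first two terms of \eqref{eq:evol1}, and they converge by strong convergence of $\tilde\rho^\epsilon$ together with $\tilde\rho^\epsilon(0)=\rho_0$.

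The source term converges from the choice of $f^i$ together with boundedness of $T_k(p^\epsilon-\xi)$ in $\Lip_D$ and its uniform convergence. The transport term now pairs $\rho^\epsilon\nabla\de^\epsilon$, which converges strongly in $L^2$, with $\nabla T_k(p^\epsilon-\xi)$, which converges weakly-$*$. This last weak convergence needs care because $T_k$ is nonsmooth; handle it by noting that on $\{|p-\xi|\ne k\}$ the truncation acts locally, or by lower semicontinuity if the sign permits.
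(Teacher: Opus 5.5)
Your overall architecture is the paper's: semi-discrete interpolants, uniform bounds, a bound on $\partial_t\tilde\rho^\epsilon$ in $L^1(0,T;\Lip_D^*(\Om))$, Aubin--Lions--Simon, $p=\beta^{-1}(\rho)$ via the bi-Lipschitz property, strong convergence of $\nabla\de^\epsilon$ by norm convergence, then the limit. Your Eikonal step is sound, and you are more careful than the paper in noting that $\de^{i+1}$ is driven by the shifted density $\rho^{i}$.

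\textbf{The gap: the time-derivative estimate.} This estimate is the input for Aubin--Lions--Simon, and you never derive it.

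If ``test with $\xi=p^{i+1}\mp$ a scaled Lipschitz function'' means $\xi=p^{i+1}\mp\lambda\zeta$, the test function is not admissible. Wherever $|\nabla p^{i+1}|=1$ (exactly where the correction $\Phi^{i+1}=m\nabla p^{i+1}$ is active), one of $p^{i+1}\pm\lambda\zeta$ leaves $\K$ for generic $\zeta$. A two-sided inequality would in fact force $\dive\Phi^{i+1}=0$; the gradient constraint makes \eqref{eq:evol_i} genuinely one-sided.

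If you instead mean the admissible combinations $(1-\lambda)p^{i+1}\mp\lambda\zeta$ with $\zeta\in\K$, this is equivalent to testing with $\xi=\mp\zeta$ and $k$ large, using the symmetry of $\K$, as in \cref{lem:rho}. Then you only get
\[
\bigl|\langle\partial_t\tilde\rho^\epsilon,\zeta\rangle\bigr|\le C\bigl(1+\Vert f^\epsilon(t)\Vert_{\Lip_D^*}\bigr)-\int_\Om\partial_t\tilde\rho^\epsilon\,p^\epsilon\,\dd x .
\]
The last term has neither a sign nor an a priori bound. The vague ``$L^1$-type estimate'' does not supply one either: the $L^1$-contraction of \cref{prop:existence-stat} compares solutions with the same potential $\de$, whereas $\de^{i+1}\neq\de^{i}$.

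\textbf{The missing idea} is the energy inequality hidden in the proof of \cref{lem:d-ineq}. Since $\partial_t\tilde\rho^\epsilon\bigl(\beta^{-1}(\rho^\epsilon)-\beta^{-1}(\tilde\rho^\epsilon)\bigr)\ge0$, you have
\[
\int_\Om\partial_t\tilde\rho^\epsilon\,p^\epsilon\,\dd x\ge\frac{\dd}{\dd t}\int_\Om\Psi(\tilde\rho^\epsilon)\,\dd x,
\qquad \Psi(r)=\int_0^r\beta^{-1}(s)\,\dd s\ge0 .
\]
Take the supremum over $\zeta\in\K$ at each fixed $t$ and integrate over $(0,T)$. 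The energy term telescopes, giving
\[
\int_0^T\Vert\partial_t\tilde\rho^\epsilon\Vert_{\Lip_D^*}\,\dd t\le C\bigl(T+\Vert f\Vert_{L^1(0,T;\Lip_D^*)}+\epsilon\bigr)+\int_\Om\Psi(\rho_0)\,\dd x .
\]
This is how the paper proves \cref{lem:rho}. The same bound is what you need for $\tilde\rho^\epsilon-\rho^\epsilon\to0$, and for the shifted sequence $\rho^\epsilon(\cdot-\epsilon)$ in your Step 2. Without it, your strong convergence of $\he(\rho^{i})$, and hence of $\nabla\de^\epsilon$, is not justified either.

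\textbf{A smaller point on Step 3.} No special care is needed for $\nabla T_k(p^\epsilon-\xi)$. Since $T_k$ is $1$-Lipschitz, $T_k(p^\epsilon-\xi)\to T_k(p-\xi)$ strongly, and the gradients are bounded in $L^\infty(Q)$. Hence every weak-$*$ limit point of $\nabla T_k(p^\epsilon-\xi)$ is the distributional gradient $\nabla T_k(p-\xi)$; no local analysis or sign condition is required. Combined with your strong $L^2$ convergence of $\rho^\epsilon\nabla\de^\epsilon$, this closes the transport term exactly as in the paper.

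Your Eikonal argument is a mildly different but valid substitute for the paper's. You use uniform stability of the maximal subsolution for a.e.\ $t$ plus norm convergence in $L^2(Q)$. The paper uses a weak-limit argument with the $\arg\max$ characterization and Radon--Riesz in $L^s$.
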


The proof of \cref{thm:th2} relies on the following lemmas.
\begin{lemma}\label{lem:rho}
    The sequences $(p^\epsilon)_{\epsilon}$ and $(\tilde{\rho}^{\epsilon})_{\epsilon}$ are bounded in $L^{\infty}(0,T; \Lip_{D}(\Omega))$, and the sequence $(\partial_t \tilde{\rho}^{\epsilon})_{\epsilon}$ is bounded in $L^1(0,T; \Lip_{D}^*(\Om))$.
\end{lemma}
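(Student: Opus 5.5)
The plan is to split the lemma into three estimates, in this order: uniform Lipschitz bounds on $p^\epsilon$, transfer of these bounds to $\tilde\rho^\epsilon$ through $\beta$, and a duality bound on $\partial_t\tilde\rho^\epsilon$ read off the equation.

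\textbf{Bound on $p^\epsilon$.} This is nearly free from the constraint set. Every $p^{i+1}\in\K$, so $\vert\nabla p^{i+1}\vert\le 1$ a.e. and $p^{i+1}=0$ on $\Gamma_D$. Since $\Omega$ is bounded (and connected, with $\Gamma_D$ nonempty), integrating along paths from $\Gamma_D$ gives $\Vert p^{i+1}\Vert_{L^\infty}\le M:=\diam(\Omega)$, or the geodesic diameter if $\Omega$ is not convex. This bound is uniform in $i$ and in $\epsilon$. Hence $\Vert p^\epsilon(t)\Vert_{\Lip_D}\le \max(M,1)$ for all $t$, which is the $L^\infty(0,T;\Lip_D(\Omega))$ bound. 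For $t=0$ I will assume, as is implicit in \cref{def:interpolants}, that $\rho^0=\beta(p^0)$ with $p^0\in\K$. Otherwise the initial step has to be treated separately, for instance by approximating $\rho_0$.

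\textbf{Bound on $\tilde\rho^\epsilon$.} Since $\beta$ is Lipschitz on the compact set $[-M,M]$ with constant $L_M$, and $\beta(0)=0$, each $\rho^{i}=\beta(p^{i})$ satisfies $\vert\rho^{i}\vert\le L_M M$ and $\vert\nabla\rho^{i}\vert\le L_M$ by the chain rule for Lipschitz compositions. It also satisfies $\rho^i=0$ on $\Gamma_D$. Convex combinations preserve these bounds, so $\tilde\rho^\epsilon(t)$ is bounded in $\Lip_D(\Omega)$ uniformly in $t$ and $\epsilon$.

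\textbf{Bound on $\partial_t\tilde\rho^\epsilon$ (the main obstacle).} Here one must extract a dual estimate from a variational inequality rather than an equation. Fix $\varphi\in\Lip_D(\Omega)$ with $\Vert\varphi\Vert_{\Lip}\le 1$. For $\lambda\in(0,1)$ I would test \eqref{eq:evol_i} with $\xi=(1-\lambda)p^{i+1}\pm\lambda\varphi$, which lies in $\K$ by convexity, and take $k$ larger than $\Vert p^{i+1}-\xi\Vert_\infty$ so that $T_k$ is the identity. Dividing by $\lambda$ yields
\[
\pm\int_\Omega\frac{\rho^{i+1}-\rho^i}{\epsilon}(p^{i+1}-\varphi)\,\dd x\lesssim \vert\langle f^{i+1},p^{i+1}\mp\varphi\rangle\vert+\Vert\rho^{i+1}\Vert_\infty\Vert\nabla\de^{i+1}\Vert_{L^1}\cdot 2 .
\]
This only controls the pairing against $p^{i+1}-\varphi$, not against $\varphi$ alone. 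To close the estimate I would add the inequality obtained with $\xi=0$, which controls $\int\frac{\rho^{i+1}-\rho^i}{\epsilon}p^{i+1}$ from above. For the lower bound I would use $\int(\rho^{i+1}-\rho^i)p^{i+1}\ge\int_\Omega \Psi(\rho^{i+1})-\Psi(\rho^i)$, where $\Psi$ is the convex primitive of $\beta^{-1}$. This gives a telescoping sum, bounded thanks to the $L^\infty$ bounds above.

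The transport term is controlled by $\Vert\nabla\de^{i+1}\Vert_\infty=\Vert\he(\rho^i)\Vert_\infty$, which is bounded because $\he$ is continuous and $\rho^i$ lies in a fixed compact set. The source term is controlled by $\Vert f^{i+1}\Vert_{\Lip_D^*}$ times a uniform Lipschitz bound. Multiplying by $\epsilon$ and summing over $i$ gives
\[
\int_0^T\Vert\partial_t\tilde\rho^\epsilon\Vert_{\Lip_D^*}\,\dd t\le C\bigl(T+\Vert f\Vert_{L^1(\Lip_D^*)}+\epsilon\bigr),
\]
using the choice of the $f^i$.

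If the $\lambda$-perturbation argument does not produce a clean two-sided bound, the fallback is to show that $(\rho^{i+1},p^{i+1})$ satisfies the equation in $\D'$ with a measure flux $\Phi^{i+1}$ of bounded total mass. This would go via the minimum-flow duality underlying \eqref{minproc}, and the bound would then follow by pairing with $\varphi$ directly.
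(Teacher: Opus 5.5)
Your proposal is correct and takes essentially the paper's route: you test with $\pm\varphi\in\K$, take $k$ large so that $T_k$ is the identity, and bound $\int_\Omega \partial_t\tilde\rho^\epsilon\, p^\epsilon\,\dd x$ from below by the telescoping primitive of $\beta^{-1}$. Your $\lambda$-perturbation is equivalent to testing directly with $\pm\varphi$, and the extra $\xi=0$ inequality is not needed; the paper gets the lower bound through $\beta^{-1}(\tilde\rho^\epsilon)$ and the monotonicity argument of \cref{lem:d-ineq} instead of your discrete convexity inequality, but the two are equivalent. Two details of yours are more careful than the paper: taking the supremum over $\varphi$ at each time step before summing is what the $L^1(0,T;\Lip_D^*(\Omega))$ norm actually requires (a supremum over time-independent $\xi$ after integrating only controls $\tilde\rho^\epsilon(T)-\rho_0$), and the condition $\rho^0=\beta(p^0)$ with $p^0\in\K$ is an assumption the paper uses implicitly.
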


\begin{proof}
First, we note that $p^{\epsilon}(t) \in \K$, for all $t \in [0,T]$. Since the space $\K$ is a bounded subset of $\Lip_D(\Omega)$ and $L^\infty(\Omega)$, the sequence $(p^{\epsilon})_{\epsilon}$ is bounded in $L^{\infty}(0,T; \Lip_D(\Omega))$. Moreover, since $\rho^\epsilon = \beta(p^{\epsilon})$ \pp and $\beta$ is Lipschitz continuous, we deduce that $(\rho^\epsilon)_{\epsilon}$ is bounded in $L^{\infty}(0,T; \Lip_{D}(\Omega))$. By convex combination, the piecewise linear interpolant $(\tilde{\rho}^{\epsilon})_{\epsilon}$ is itself bounded in $L^\infty(0,T; \Lip_{D}(\Omega))$.

     To bound the time derivative $\partial_t \tilde{\rho}^{\epsilon}$, we observe that since $\K$ is symmetric, $-\xi \in \K$ for any $\xi \in \K$. Moreover, going back to the proof of \cref{lem:d-ineq}, we have  for all $t \in (t_i, t_{i+1}]$,
\begin{equation}\label{eq:v_ineq_lemrho}
    \int_{\Omega} \partial_t \tilde{\rho}^{\epsilon} T_k (p^\epsilon-\xi) \dd x + \int_{\Omega} \rho^\epsilon \nabla \de^\epsilon \cdot \nabla T_k(p^\epsilon-\xi) \dd x \le \int_{\Omega} f^\epsilon T_k(p^\epsilon-\xi) \dd x, \quad \text{for all } \xi \in \K.
\end{equation}
    Since $p^\epsilon$ and $\xi$ belong to $\K \subset L^\infty(\Omega)$, taking $k > \Vert p^\epsilon - \xi \Vert_\infty$ ensures that $T_k(p^\epsilon - \xi) = p^\epsilon - \xi$. Thus, the inequality \eqref{eq:v_ineq_lemrho} simplifies to
\begin{equation}\label{eq:v_ineq_k_large}
    \int_{\Omega} \partial_t \tilde{\rho}^{\epsilon} (p^\epsilon-\xi) \dd x + \int_{\Omega} \rho^{\epsilon}\nabla \de^\epsilon \cdot \nabla(p^{\epsilon}-\xi) \dd x \le \int_{\Omega} f^{\epsilon} (p^{\epsilon}-\xi) \dd x,
\end{equation}
     for any $\xi \in \K$. Substituting $-\xi$ for $\xi$ in the \eqref{eq:v_ineq_k_large}, we obtain
    \begin{equation}
        \int_{\Omega} \partial_t \tilde{\rho}^{\epsilon} (\beta^{-1}(\tilde{\rho}^{\epsilon})+\xi) \dd x + \int_{\Omega} \rho^{\epsilon}\nabla \de^\epsilon \cdot \nabla(p^{\epsilon}+\xi) \dd x \le \int_{\Omega} f^{\epsilon} (p^{\epsilon}+\xi) \dd x.
    \end{equation}	
    Expanding the first term yields
    \begin{equation}\label{eq:bound_xi}
        \int_{\Omega} \partial_t \tilde{\rho}^{\epsilon} \xi \dd x \leq - \int_{\Omega} \rho^{\epsilon}\nabla \de^\epsilon \cdot \nabla(p^{\epsilon}+\xi) \dd x + \int_{\Omega} f^{\epsilon} (p^{\epsilon}+\xi) \dd x - \int_{\Omega} \partial_t \tilde{\rho}^{\epsilon} \beta^{-1}(\tilde{\rho}^{\epsilon}) \dd x, 
    \end{equation}
    for all  $t \in [t_i, t_{i+1})$.
    Since $\partial_t \tilde{\rho}^{\epsilon} \beta^{-1}(\tilde{\rho}^{\epsilon}) = \frac{\dd}{\dd t} \int_{0}^{\tilde{\rho}^{\epsilon}(t)} \beta^{-1}(s) \dd s$, we get, by substituting this into \eqref{eq:bound_xi},
    \begin{equation}\label{eq:eqxx}
        \int_{\Omega} \partial_t \tilde{\rho}^{\epsilon} \xi \dd x \leq -\int_{\Omega} \rho^\epsilon \nabla \de^\epsilon \cdot \nabla(p^\epsilon +\xi) \dd x + \int_{\Omega} f^{\epsilon} (p^{\epsilon}+\xi) \dd x - \frac{\dd}{\dd t} \int_{\Omega} \int_{0}^{\tilde{\rho}^{\epsilon}(t, x)} \beta^{-1}(s) \dd s \dd x, 
    \end{equation}
    for all $t \in [t_i, t_{i+1})$.
    Integrating \eqref{eq:eqxx} over the entire time interval $[0,T]$, we obtain
    \begin{equation*}
        \begin{aligned}
            \int_{0}^{T}\int_{\Omega} \partial_t \tilde{\rho}^{\epsilon} \xi \dd x \dd t 
            &\leq -\int_{0}^{T} \int_{\Omega} \rho^\epsilon \nabla \de^\epsilon \cdot \nabla(p^\epsilon +\xi) \dd x \dd t + \int_{0}^{T}\int_{\Omega} f^{\epsilon} (p^{\epsilon}+\xi) \dd x \dd t \\
            &\quad + \int_{\Omega}\int_{0}^{\rho_0(x)} \beta^{-1}(s)\dd s \dd x - \int_{\Omega} \int_{0}^{\tilde{\rho}^\epsilon(T, x)} \beta^{-1}(s) \dd s \dd x.
        \end{aligned}
    \end{equation*}
    Since $\rho^\epsilon$ and $p^\epsilon$ are bounded in $L^{\infty}(0,T; \Lip_{D}(\Omega))$, $\nabla \de^\epsilon$ is bounded, and $\beta^{-1}$ is continuous, the right-hand side is bounded by a constant $C > 0$ independent of $\epsilon$. Taking the supremum over all test functions $\xi \in \K$, we deduce that
    \begin{equation}
        \sup_{\xi \in \K} \int_{0}^{T} \int_{\Omega} \partial_t \tilde{\rho}^{\epsilon} \xi \dd x \dd t \leq C < \infty,
    \end{equation}
    which means that the sequence $(\partial_t \tilde{\rho}^{\epsilon})_{\epsilon}$ is bounded in $L^1(0,T; \Lip_{D}^*(\Om))$.
\end{proof}
	\begin{lemma}\label{lem:rhoep}
    There exist two subsequences of $(\tilde{\rho}^{\epsilon})_{\epsilon}$ and $(\rho^{\epsilon})_{\epsilon}$, denoted respectively by $(\tilde{\rho}^{\epsilon_k})_{k}$ and $(\rho^{\epsilon_k})_{k}$, such that 	
    \[
        \tilde{\rho}^{\epsilon_k} \to \rho \quad \text{in } L^1(0,T; C(\Omega)) \quad \text{and} \quad \rho^{\epsilon_k} \to \rho \quad \text{in } L^1(0,T; C(\Omega)).
    \]
\end{lemma}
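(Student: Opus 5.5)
We apply the Aubin--Lions--Simon lemma (\cref{lem:ALS}) to the piecewise linear interpolants $(\tilde\rho^\epsilon)_\epsilon$, and then transfer the convergence to the piecewise constant interpolants $(\rho^\epsilon)_\epsilon$.

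\textbf{Step 1 (compactness of $\tilde\rho^\epsilon$).} We use the triple of spaces
\[
\Lip_D(\Omega)\hookrightarrow C(\overline\Omega)\hookrightarrow \Lip_D^*(\Omega).
\]
The first embedding is compact by Arzel\`a--Ascoli, since $\Omega$ is bounded and a bounded set of $\Lip_D(\Omega)$ is uniformly bounded and equi-Lipschitz. The second embedding is continuous and injective, since an element of $C(\overline\Omega)$ acts on $\Lip_D(\Omega)$ by integration. By \cref{lem:rho}, $(\tilde\rho^\epsilon)_\epsilon$ is bounded in $L^\infty(0,T;\Lip_D(\Omega))\subset L^1(0,T;\Lip_D(\Omega))$. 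Also by \cref{lem:rho}, $(\partial_t\tilde\rho^\epsilon)_\epsilon$ is bounded in $L^1(0,T;\Lip_D^*(\Omega))$. Simon's version of the lemma, with $p=1$ and time derivatives in $L^1$, then gives relative compactness in $L^1(0,T;C(\overline\Omega))$. We extract a subsequence $\tilde\rho^{\epsilon_k}\to\rho$ in $L^1(0,T;C(\Omega))$. If the $L^\infty$-in-time bound is used, the convergence even holds in $C([0,T];\Lip_D^*)\cap L^q(0,T;C)$ for every $q<\infty$.

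\textbf{Step 2 (same limit for $\rho^\epsilon$).} On $(t_i,t_{i+1}]$ we have
\[
\rho^\epsilon-\tilde\rho^\epsilon=\frac{t_{i+1}-t}{\epsilon}\,(\rho^{i+1}-\rho^i)=(t_{i+1}-t)\,\partial_t\tilde\rho^\epsilon,
\]
so $\|\rho^\epsilon-\tilde\rho^\epsilon\|_{L^1(0,T;\Lip_D^*)}\le \epsilon\,\|\partial_t\tilde\rho^\epsilon\|_{L^1(0,T;\Lip_D^*)}\le C\epsilon\to0$. Moreover $(\rho^\epsilon)_\epsilon$ is bounded in $L^\infty(0,T;\Lip_D(\Omega))$ by \cref{lem:rho}, since $\rho^\epsilon=\beta(p^\epsilon)$ with $\beta$ Lipschitz on the bounded range of $p^\epsilon$. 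We then use the Ehrling--Lions interpolation inequality: for every $\eta>0$ there is $C_\eta$ such that
\[
\|u\|_{C}\le \eta\|u\|_{\Lip_D}+C_\eta\|u\|_{\Lip_D^*}.
\]
This follows by contradiction from the compactness of the first embedding. Applying it to $u=\rho^\epsilon-\tilde\rho^\epsilon$ and integrating in time gives
\[
\|\rho^\epsilon-\tilde\rho^\epsilon\|_{L^1(0,T;C)}\le \eta\,TC+C_\eta C\epsilon .
\]
Letting first $\epsilon\to0$ and then $\eta\to0$ shows that this difference tends to $0$, hence $\rho^{\epsilon_k}\to\rho$ in $L^1(0,T;C(\Omega))$ as well.

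\textbf{Main obstacle.} The delicate point is Step 1. The time derivative is only controlled in $L^1$ with values in the dual space $\Lip_D^*$, which is non-reflexive and not even a standard Banach space of distributions. We must check that Simon's criterion applies as stated in \cref{lem:ALS}, and in particular that $C(\overline\Omega)\hookrightarrow\Lip_D^*$ is injective. For injectivity: if $\int u\varphi=0$ for all $\varphi\in\Lip_D$, then $u=0$ in $\Omega$, by density of $C_c^\infty(\Omega)$-type Lipschitz functions vanishing near $\Gamma_D$. If needed, we can instead verify Simon's time-translation criterion directly: $\|\tilde\rho^\epsilon(\cdot+h)-\tilde\rho^\epsilon\|_{L^1(\Lip_D^*)}\le h\|\partial_t\tilde\rho^\epsilon\|_{L^1(\Lip_D^*)}$, combined with the same Ehrling inequality as in Step 2.
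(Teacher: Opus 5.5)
Your proposal is correct and follows the paper's route. You apply Aubin--Lions--Simon to $\tilde\rho^\epsilon$ in the triple $\Lip_D\hookrightarrow C(\overline\Omega)\hookrightarrow\Lip_D^*$, use the identity $\rho^\epsilon-\tilde\rho^\epsilon=(t_{i+1}-t)\,\partial_t\tilde\rho^\epsilon$ to get an $O(\epsilon)$ bound in $L^1(0,T;\Lip_D^*)$, and upgrade it to $L^1(0,T;C)$ through the uniform $L^\infty(0,T;\Lip_D)$ bound. Your Ehrling inequality simply makes explicit the step the paper asserts with a bare ``therefore''; applying it to $\rho^\epsilon-\tilde\rho^\epsilon$ also means you never need regularity of the limit.

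One side remark should be dropped: compactness in $C([0,T];\Lip_D^*)$ does not follow from a time-derivative bound that is only $L^1$ in time, since that case of \cref{lem:ALS} needs $r>1$. You never use it, so the proof is unaffected.
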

\begin{proof}
    Thanks to \cref{lem:rho}, the sequence $(\tilde{\rho}^{\epsilon})_{\epsilon}$ is bounded in $L^{\infty}(0,T; \Lip_D(\Omega))$ and its time derivative $(\partial_t \tilde{\rho}^{\epsilon})_{\epsilon}$ is bounded in $L^1(0,T; \Lip_{D}^*(\Om))$. 
    Recall the inclusions
    \[
        \Lip_{D}(\Om) \hookrightarrow C(\Om) \hookrightarrow \Lip_{D}^*(\Om).
    \]
    Since the embedding $\Lip_{D}(\Om) \hookrightarrow C(\Om)$ is compact, and $C(\Om) \hookrightarrow \Lip_{D}^*(\Om)$ is continuous, the Aubin-Lions-Simon lemma (\cref{lem:ALS}) ensures the existence of a subsequence $(\tilde{\rho}^{\epsilon_k})_{k}$ and a limit function $\rho$ such that
    \begin{equation}\label{eq:vek}
        \tilde{\rho}^{\epsilon_k} \to \rho \quad \text{in } L^s(0,T; C(\Omega)), \quad \text{for all } 1 \le s < +\infty.
    \end{equation}
     By definition, for any $t \in [t_i, t_{i+1})$, we have $\rho^\epsilon(t) = \rho^{i+1}$, which yields, by the definition of $\tilde{\rho}^{\epsilon}$,
    \begin{equation}
        \tilde{\rho}^{\epsilon}(t) - \rho^\epsilon(t) = (t - t_{i+1}) \partial_t \tilde{\rho}^{\epsilon}(t).
    \end{equation}
    We get the estimate
    \begin{equation}\label{eq:vek2}
        \Vert\tilde{\rho}^{\epsilon} - \rho^{\epsilon}\Vert_{L^1(0,T; \Lip_{D}^*(\Om))} \leq \epsilon \Vert\partial_t \tilde{\rho}^{\epsilon}\Vert_{L^1(0,T; \Lip_{D}^*(\Om))}.
    \end{equation}
    From \cref{lem:rho}, $(\partial_t \tilde{\rho}^{\epsilon})_{\epsilon}$ is bounded in $L^1(0,T; \Lip_{D}^*(\Om))$. Thus, taking the limit as $\epsilon \to 0$ in \eqref{eq:vek2}, we deduce that $\Vert\tilde{\rho}^{\epsilon} - \rho^{\epsilon}\Vert_{L^1(0,T; \Lip_{D}^*(\Om))} \to 0$. By the triangle inequality,
    \begin{equation}\label{eq:rhoep}
        \Vert\rho^{\epsilon_k} - \rho\Vert_{L^1(0,T; \Lip^{*}_{D}(\Om))} \leq \Vert\rho^{\epsilon_k} - \tilde{\rho}^{\epsilon_k}\Vert_{L^1(0,T; \Lip^{*}_{D}(\Om))} + \Vert\tilde{\rho}^{\epsilon_k} - \rho\Vert_{L^1(0,T; \Lip^{*}_{D}(\Om))}.
    \end{equation}
    Since $\tilde{\rho}^{\epsilon_k} \to \rho$ in $L^1(0,T; C(\Omega))$, it also converges in $L^1(0,T; \Lip_D^*(\Omega))$. Combining this with the vanishing difference \eqref{eq:vek2}, we conclude that 	
    \begin{equation}\label{eq:convrho}
        \rho^{\epsilon_k} \to \rho \quad \text{in } L^1(0,T; \Lip_D^*(\Om)).
    \end{equation}
 We know from \cref{lem:rho} that the sequence $(\rho^{\epsilon_k})_{k}$ is bounded in $L^{\infty}(0,T; \Lip_D(\Omega))$. Therefore, 
    \begin{equation*}
        \rho^{\epsilon_k} \to \rho \quad \text{in } L^1(0,T; C(\Omega)),
    \end{equation*}
    which completes the proof.
\end{proof}
\begin{lemma}\label{lem:peps}
    There exists a subsequence $(p^{\epsilon_k})_{k}$ of $(p^{\epsilon})_{\epsilon}$ and a function $p \in L^1(0,T; C(\Omega))$, with $\rho=\beta(p)$, such that
    \begin{equation*}
        p^{\epsilon_k} \to p \quad \text{in } L^1(0,T; C(\Omega)).
    \end{equation*}
    Moreover, $p(t, \cdot) \in \K$ for almost all $t \in (0,T)$.
\end{lemma}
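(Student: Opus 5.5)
The plan is to obtain the convergence of $p^{\epsilon}$ directly from that of $\rho^{\epsilon}$, using the bi-Lipschitz character of $\beta$ on compact sets. No new compactness argument should be needed.

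\textbf{Uniform bounds.} Since $p^{\epsilon}(t)\in\K$ for every $t$, and $\K$ is bounded in $L^\infty(\Om)$ ($\Omega$ bounded, $\vert\nabla p\vert\le 1$, $p=0$ on $\Gamma_D$), there is $M>0$ independent of $\epsilon$ with $\vert p^\epsilon(t,x)\vert\le M$. Consequently $\rho^\epsilon=\beta(p^\epsilon)$ takes values in the compact interval $[\beta(-M),\beta(M)]$. Because $\beta$ is bi-Lipschitz on compact sets, $\beta^{-1}$ is well defined and Lipschitz on this interval, with some constant $L'_M$. This yields the pointwise estimate
\[
\vert p^{\epsilon}(t,x)-p^{\epsilon'}(t,x)\vert\le L'_M\,\vert\rho^{\epsilon}(t,x)-\rho^{\epsilon'}(t,x)\vert .
\]

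\textbf{Transfer of convergence.} By \cref{lem:rhoep}, $\rho^{\epsilon_k}\to\rho$ in $L^1(0,T;C(\Omega))$. All $\rho^{\epsilon_k}$ take values in the closed interval above, so up to a further subsequence ($L^1$ convergence in time gives a.e.\ convergence in time in $C(\Omega)$) the limit $\rho$ does too. I then set $p:=\beta^{-1}(\rho)$. The Lipschitz bound on $\beta^{-1}$ gives
\[
\Vert p^{\epsilon_k}(t)-p(t)\Vert_{C(\Omega)}\le L'_M\Vert\rho^{\epsilon_k}(t)-\rho(t)\Vert_{C(\Omega)},
\]
and integrating in $t$ yields $p^{\epsilon_k}\to p$ in $L^1(0,T;C(\Omega))$ with $\rho=\beta(p)$. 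As an alternative, one could rerun the Aubin--Lions argument on $p^\epsilon$, but the time-derivative bound is available only for $\tilde\rho^\epsilon$, so the composition route is cleaner.

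\textbf{Membership in $\K$.} This is the step I expect to need the most care, since it requires closedness of $\K$ under the available convergence. Extract a subsequence such that $p^{\epsilon_k}(t)\to p(t)$ uniformly on $\Omega$ for a.e.\ $t$. For such $t$, each $p^{\epsilon_k}(t)$ is $1$-Lipschitz along segments in $\Omega$ (locally), vanishes on $\Gamma_D$, and is bounded by $M$. Uniform limits preserve these properties, so $\vert\nabla p(t)\vert\le1$ a.e.\ and $p(t)=0$ on $\Gamma_D$. Equivalently, $\K$ is convex and closed in $C(\overline\Omega)$, or weak-$*$ closed in $W^{1,\infty}$ by Arzel\`a--Ascoli together with lower semicontinuity of the gradient bound. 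Hence $p(t)\in\K$ for a.e.\ $t$. Finally, if $\beta$ is merely nondecreasing rather than strictly increasing, the bi-Lipschitz assumption is exactly what guarantees injectivity, so $\beta^{-1}$ is unambiguous.
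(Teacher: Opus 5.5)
Your proposal is correct and follows essentially the same route as the paper. You set $p=\beta^{-1}(\rho)$, transfer the $L^1(0,T;C(\Omega))$ convergence of $\rho^{\epsilon_k}$ through the Lipschitz bound on $\beta^{-1}$ over the compact range given by the uniform $L^\infty$ bound on $\K$, and pass the $1$-Lipschitz property and the Dirichlet condition to the limit to get $p(t,\cdot)\in\K$. Your version is slightly more careful than the paper's in three places: you check that $\rho$ stays in the interval where $\beta^{-1}$ is Lipschitz, you extract a subsequence with uniform convergence for a.e.\ $t$ before passing to the limit in the Lipschitz inequality, and you use the gradient bound only locally, which matters if $\Omega$ is not convex.
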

\begin{proof}

Thanks to \cref{lem:rhoep}, we know that 
    \begin{equation}\label{eq:rr}
        \rho^{\epsilon_k} \to \rho \quad \text{in } L^1(0,T; C(\Omega)).
    \end{equation}
    Let $p= \beta^{-1}(\rho)$. From \cref{lem:rho}, the sequence $(p^{\epsilon_k})_{k}$ is bounded in $L^\infty(0,T; \Lip_D(\Omega))$, and thus in $L^\infty((0,T)\times\Omega)$. Since $\beta^{-1}$ is Lipschitz continuous on compact sets, there exists a constant $L > 0$ such that, for almost every $(t,x)$, 
    \begin{equation}\label{eq:mg}
        \vert p^{\epsilon_k}(t,x) - p(t,x) \vert \leq L \vert \rho^{\epsilon_k}(t,x) - \rho(t,x) \vert,
    \end{equation}
    which implies that $\Vert p^{\epsilon_k} - p \Vert_{L^1(0,T; C(\Omega))} \leq L \Vert \rho^{\epsilon_k} - \rho \Vert_{L^1(0,T; C(\Omega))}$. Using \eqref{eq:rr}, we deduce that $p^{\epsilon_k} \to p$ in $L^1(0,T; C(\Omega))$.
    To show that $p(t, \cdot) \in \K$ for \pp $t\in (0,T)$, we notice that for any $\epsilon_k >0$, the function $p^{\epsilon_k}(t, \cdot) \in \K$ is $1$-Lipschitz continuous, \ie
        \begin{equation}\label{eq:lipsh}
        \vert p^{\epsilon_k}(t,x) - p^{\epsilon_k}(t,y) \vert \le \vert x-y \vert, \quad \text{for any } x, y \in \Omega.
    \end{equation}
    Passing to the limit in \eqref{eq:lipsh} as $k \to \infty$ yields
    \begin{equation*}
        \vert p(t,x) - p(t,y)\vert \le \vert x-y \vert, \quad \text{for any } x, y \in \Omega.
    \end{equation*}
    Thus, $p(t, \cdot)$ is also uniformly $1$-Lipschitz continuous. By Rademacher's theorem, $p(t, \cdot)$ is differentiable almost everywhere in $\Omega$ with $\vert \nabla p(t, \cdot) \vert \leq 1$. Furthermore, the uniform convergence preserves the homogeneous Dirichlet boundary condition on $\Gamma_D$. Consequently, $p(t, \cdot) \in \K$ for \pp $t\in (0,T)$.
\end{proof}
\begin{lemma}\label{lem:l5}
    Let $\xi \in W^{1,\infty}(\Omega)$ be such that $\vert\nabla \xi(x)\vert \le c(x)$ \pp, and let $(c_n)_{n \in \mathbb{N}}$ be a sequence of positive functions in $C(\overline{\Omega})$ such that $c_n \to c$ in $C(\overline{\Omega})$ as $n \to \infty$. Then, for any $\alpha \in (0,1)$, there exists $n_\alpha\in\N$ such that, for any $n \ge n_\alpha$, 
    \[
        \vert\nabla \xi\vert\le \frac{c_n}{1-\alpha} \quad \text{\pp~ in } \Omega.
    \]
\end{lemma}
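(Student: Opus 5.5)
The plan is to reduce the claim to a pointwise comparison between $c$ and $c_n$ that holds uniformly on $\overline{\Omega}$. The key quantity is the minimum of $c$. I will assume, as the statement implicitly requires, that the limit $c$ is positive on $\overline{\Omega}$; this is the situation in the application, where $c=\he(\rho)>0$. Since $\overline{\Omega}$ is compact and $c$ is continuous, this gives $c_{\min}:=\min_{\overline{\Omega}} c>0$.

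Fix $\alpha\in(0,1)$. By uniform convergence, we choose $n_\alpha$ such that $\Vert c_n-c\Vert_{C(\overline{\Omega})}\le \alpha\, c_{\min}$ for all $n\ge n_\alpha$. Then for every $x\in\overline{\Omega}$,
\[
c(x)\le c_n(x)+\alpha c_{\min}\le c_n(x)+\alpha c(x),
\]
which gives $(1-\alpha)c(x)\le c_n(x)$, that is, $c(x)\le c_n(x)/(1-\alpha)$. Combining this with the hypothesis $\vert\nabla\xi\vert\le c$ a.e. yields $\vert\nabla\xi\vert\le c_n/(1-\alpha)$ a.e. in $\Omega$ for every $n\ge n_\alpha$, which is the claim.

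The only delicate point is the positivity of the limit $c$. Uniform convergence alone allows only an additive perturbation $c\le c_n+\delta$. Turning that into the multiplicative bound with factor $1/(1-\alpha)$ requires $\delta$ to be dominated by a fixed fraction of $c$ at every point, and this is exactly what $c_{\min}>0$ provides. If $c$ were only nonnegative, the statement could fail where $c$ vanishes, so I will state the positivity explicitly. In the intended use it follows because $\he$ takes values in $(0,+\infty)$ and is evaluated on a uniformly bounded density, so $\he(\rho)$ is continuous and bounded below by a positive constant.
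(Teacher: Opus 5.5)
Your proof is correct and is essentially the paper's argument. The paper writes $c=(c/c_n)\,c_n$ and asserts that $c/c_n\to 1$ uniformly, which tacitly requires $\min_{\overline{\Omega}}c>0$; that is exactly the hypothesis you make explicit, and your choice $\Vert c_n-c\Vert_{C(\overline{\Omega})}\le\alpha c_{\min}$ is simply a more direct way of getting the same multiplicative bound. You are right that the hypothesis cannot be dropped: on $\Omega=(-1,1)$, take $c(x)=\vert x\vert$, $c_n(x)=\max(\vert x\vert-1/n,1/n^2)$ and $\xi(x)=x\vert x\vert/2$, and the conclusion fails on $(1/(2n),1/n)$ for every $n\ge 2/(1-\alpha)$. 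The hypothesis does hold in the paper's application, since there $c=\he(\rho(t,\cdot))$ with $\he>0$ continuous and $\rho$ bounded.
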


\begin{proof}
    Since $c_n(x) > 0$, we can write $\vert\nabla \xi(x)\vert \le c(x) = \left(\frac{c(x)}{c_n(x)}\right) c_n(x)$, and since $c_n \to c$ uniformly in $C(\overline{\Omega})$, the ratio $\frac{c}{c_n}$ converges to $1$ uniformly. Thus, for any $\epsilon > 0$, there exists $n_0 \in \N$ such that for all $n \geq n_0$ and all $x \in \Omega$, we have $\frac{c(x)}{c_n(x)} \le 1+\epsilon$.
    This implies that 
    \[
        \vert\nabla \xi(x)\vert \leq (1+\epsilon) c_n(x), \quad \text{for all } n \ge n_0.
    \]
    Now pick $\alpha \in (0,1)$, and thus $\frac{1}{1-\alpha} > 1$. Taking $\epsilon > 0$ small enough such that $1+\epsilon \le \frac{1}{1-\alpha}$ and $n_\alpha = n_0$ yields $\vert\nabla \xi\vert \le \frac{c_n}{1-\alpha}$, as claimed.
\end{proof}
\begin{lemma}\label{lem:Deps}
    There exists a subsequence of $(\de^\epsilon)_{\epsilon}$, denoted by $(\de^{\epsilon_k})_{k}$, and a limit function $\de \in L^s(0,T; W^{1,s}(\Omega))$, such that 	
    \begin{equation*}
        \de^{\epsilon_k} \to \de \quad \text{in } L^s(0,T; W^{1,s}(\Omega)).
    \end{equation*}
    Moreover, for \pp $t \in [0,T]$, $\de(t, \cdot)$ is the maximal subsolution to the Eikonal equation
    \begin{equation}\leqnomode
        \tag{$\textup{E}$}
        \label{eq:eikonal-H}
        \begin{cases}
            \vert \nabla \de(t, \cdot) \vert = \he(\rho(t, \cdot)) & \text{in } \Omega, \\
            \de(t, \cdot) = 0 & \text{on } \Gamma_D.
        \end{cases}
    \end{equation}
\end{lemma}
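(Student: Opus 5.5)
The plan is to combine the uniform convergence of the cost functions, inherited from \cref{lem:rhoep}, with the variational characterization of \cref{EnnajiaugmentedLag}, using \cref{lem:l5} to compare admissible sets. Strong convergence of the gradients will then follow from a norm-identity argument.

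\textbf{Setup and uniform convergence of the costs.} Recall that $\de^\epsilon(t)=\de^{i+1}$ solves the Eikonal equation with cost $\kk^\epsilon(t):=\he(\rho^{i})$, which is the cost built from the shifted interpolant $\rho^\epsilon(t-\epsilon)$. Since $\rho^{\epsilon_k}\to\rho$ in $L^1(0,T;C(\Omega))$ and translations are continuous in $L^1(0,T;C(\Omega))$, the shifted sequence also converges to $\rho$ in the same space. Extracting a further subsequence, we get $\rho^{\epsilon_k}(t-\epsilon_k)\to\rho(t)$ uniformly on $\overline\Omega$ for a.e.\ $t$. All these densities take values in a fixed compact set $\beta([-M,M])$, on which $\he$ is positive, continuous and locally Lipschitz. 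Hence $\kk_k(t):=\he(\rho^{\epsilon_k}(t-\epsilon_k))\to \kk(t):=\he(\rho(t))$ in $C(\overline\Omega)$, and these costs are bounded above and below by positive constants uniformly in $k$ and $t$. It follows that $|\nabla\de^{\epsilon_k}|\le C$ and, since $\de^{\epsilon_k}=0$ on $\Gamma_D$, also $\|\de^{\epsilon_k}(t)\|_\infty\le C$.

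\textbf{Identification of the limit for fixed $t$.} For a.e.\ fixed $t$, Arzel\`a--Ascoli gives that every subsequence of $\de_k:=\de^{\epsilon_k}(t)$ has a further subsequence converging uniformly, with $\nabla\de_k\rightharpoonup^*\nabla w$, to some $w$ vanishing on $\Gamma_D$. We then show $w=\de(t)$, the maximizer from \cref{EnnajiaugmentedLag} with cost $\he(\rho(t))$.
\begin{itemize}
\item \emph{Admissibility of $w$.} Since $|\nabla\de_k|\le\kk_k$, we have $|\nabla\de_k|\le(1+\eta)\kk(t)$ for $k$ large. The set $\{|\nabla z|\le(1+\eta)\kk\}$ is convex and weak-$*$ closed, so $|\nabla w|\le(1+\eta)\kk$ for every $\eta>0$, hence $|\nabla w|\le\kk$.
\item \emph{Maximality of $w$.} Take any competitor $z$ with $|\nabla z|\le\kk$. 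By \cref{lem:l5}, $(1-\alpha)z$ is admissible for the cost $\kk_k$ once $k\ge n_\alpha$. Therefore $\int(1-\alpha)z\le\int\de_k$, and letting $k\to\infty$ and then $\alpha\to0$ gives $\int z\le\int w$.
\end{itemize}
By uniqueness of the maximizer, $w=\de(t)$, so the whole sequence converges uniformly to $\de(t)$.

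\textbf{Strong convergence of the gradients (main obstacle).} Weak-$*$ convergence of the gradients is not enough to obtain convergence in $W^{1,s}$. We would use the Eikonal identities $|\nabla\de_k|=\kk_k$ a.e.\ and $|\nabla\de(t)|=\kk(t)$ a.e. These require the maximal subsolution to be an a.e.\ solution, which \cref{EnnajiaugmentedLag} asserts. Then
\[
\int_\Omega|\nabla\de_k-\nabla\de|^2\dd x=\int_\Omega\kk_k^2\dd x+\int_\Omega\kk^2\dd x-2\int_\Omega\nabla\de_k\cdot\nabla\de\dd x\longrightarrow 2\int_\Omega\kk^2\dd x-2\int_\Omega|\nabla\de|^2\dd x=0.
\]
Here the first integral converges by the uniform convergence $\kk_k\to\kk$, and the cross term converges by weak convergence of $\nabla\de_k$. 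Combined with the uniform $L^\infty$ bound on the gradients, this gives $\nabla\de_k\to\nabla\de$ in $L^s(\Omega)$ for every $s<\infty$.

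\textbf{Conclusion and measurability.} Dominated convergence in $t$, with the uniform bound $C$, then yields $\de^{\epsilon_k}\to\de$ in $L^s(0,T;W^{1,s}(\Omega))$. Measurability of $t\mapsto\de(t)$ follows because $\de$ is the pointwise a.e.\ limit of the measurable maps $t\mapsto\de^{\epsilon_k}(t)$.
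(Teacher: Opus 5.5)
Your proof is correct, but it is organized differently from the paper's.

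\textbf{The paper's route.} It works globally in time. It extracts a weak limit of $(\de^{\epsilon_k})_k$ in $L^s(0,T;W^{1,s}(\Omega))$. It identifies this limit by multiplying $\int_\Omega\de^{\epsilon_k}(t)\,\dd x\ge(1-\delta)\int_\Omega\xi\,\dd x$ by $\sigma(t)\ge0$ and passing to the weak limit. It then upgrades to strong convergence via the Radon--Riesz property of $L^s(0,T;W^{1,s}_D(\Omega))$, using $\vert\nabla\de^{\epsilon_k}\vert=\he(\rho^{\epsilon_k})\to\he(\rho)=\vert\nabla\de\vert$.

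\textbf{Your route.} You freeze $t$ and combine Arzel\`a--Ascoli with uniqueness of the maximizer to get convergence of the whole sequence $\de^{\epsilon_k}(t)$. You replace Radon--Riesz by the elementary Hilbert identity in $L^2(\Omega)$, and only then integrate in time by dominated convergence. The mechanism is the same: \cref{lem:l5} gives maximality, and convergence of norms plus weak convergence gives strong convergence.

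\textbf{What your route repairs.} Working pointwise in time fixes three points the paper passes over.
\begin{enumerate}
\item You use the correct cost $\he(\rho^{i})$ for $\de^{i+1}$. The paper writes $\he(\rho^{\epsilon_k})$ and ignores the time shift built into the scheme.
\item You prove the limit is admissible, $\vert\nabla\de(t)\vert\le\he(\rho(t))$, by weak-$*$ closedness. Without this, maximality of $\int_\Omega\de(t)\,\dd x$ alone does not identify the maximal subsolution.
\item You avoid a non-uniformity in the paper's identification step. There, the admissibility of $\xi$ and the threshold $k_0$ from \cref{lem:l5} depend on $t$, yet the inequality is integrated over the support of $\sigma$.
\end{enumerate}
Deriving the two-sided bounds on the costs from $\rho^{\epsilon_k}\in\beta([-M,M])$ is also more careful than the paper's appeal to $0\le\rho^{\epsilon_k}\le1$, which the assumptions on $\beta$ do not provide.

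\textbf{What the paper's route buys.} It is shorter, and its limit lives in the Bochner space from the start, so measurability is automatic. You must argue measurability separately, which you do.

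\textbf{A minor inaccuracy.} On the first time interval the shifted density is $\rho_0$, which is only in $L^1(\Omega)$. So the shifted sequence need not converge in $L^1(0,T;C(\Omega))$ as you state. This is harmless: $\int_{\epsilon_k}^{T}\Vert\rho^{\epsilon_k}(t-\epsilon_k)-\rho(t)\Vert_{C(\overline\Omega)}\,\dd t\to0$ still holds, and for a.e.\ fixed $t>0$ one eventually has $t>\epsilon_k$.
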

\begin{proof}
    Since $0\leq \rho^{\epsilon_k}\leq 1$ \pp, the sequence $(\rho^{\epsilon_k})_{k}$ is uniformly bounded in $L^{\infty}(Q)$. Since $\he$ is continuous, the sequence $\kk_k = \he(\rho^{\epsilon_k})$ is also bounded in $L^{\infty}(Q)$, which implies that $(\de^{\epsilon_k})_{k}$ is bounded in $L^{s}(0,T; W^{1,s}(\Omega))$. Consequently, there exists a subsequence, still denoted by $(\de^{\epsilon_k})_{k}$, and a limit function $\de \in L^{s}(0,T; W^{1,s}(\Omega))$ such that
    \begin{equation}\label{eq:weak-de}
        \de^{\epsilon_k} \rightharpoonup \de \quad \text{weakly in } L^s(0,T; W^{1,s}(\Omega)).
    \end{equation}
    We now prove that, for almost every $t \in [0,T]$, $\de(t, \cdot)$ is the maximal subsolution to \eqref{eq:eikonal-H} in the sense of \cref{EnnajiaugmentedLag}. Take $\kk = \he(\rho)$. Since $p^{\epsilon_k} \to p$ in $L^1(0,T; C(\Omega))$ and $\beta$ is Lipschitz continuous, we have that $\rho^{\epsilon_k} =\beta(p^{\epsilon_k}) \to \beta(p):=\rho$ in $L^1(0,T; C(\Omega))$. Since $\he$ is Lipschitz continuous, we have $\kk_k \to \kk$ in $L^1(0,T; C(\Omega))$. Moreover, up to a subsequence, we have that $\kk_k(t) \to \kk(t)$ uniformly in $C(\overline{\Omega})$ for \pp $t \in [0,T]$.

    Now take $t\in [0,T]$. For any test function $\xi \in W^{1,\infty}_D(\Om)$ satisfying $\vert\nabla \xi\vert \le \kk(t)$ \pp, and for any $\delta \in (0,1)$, we define $\xi_\delta = (1-\delta) \xi$. By \cref{lem:l5}, since $\kk_k(t) \to \kk(t)$ in $C(\overline{\Omega})$, there exists an integer $k_0 \ge 0$ such that for all $k \ge k_0$, we have $\vert\nabla \xi\vert \le \frac{\kk_k(t)}{1-\delta}$, which implies $\vert\nabla \xi_\delta\vert \le \kk_k(t)$.

    Since $\de^{\epsilon_k}(t, \cdot)$ is the maximal subsolution of the regularized Eikonal equation, and $\xi_\delta$ is an admissible subsolution, we have by definition
    \[
        \int_{\Omega} \de^{\epsilon_k}(t,x) \dd x \ge \int_{\Omega} \xi_\delta(x) \dd x = (1-\delta) \int_{\Omega} \xi(x) \dd x, \quad \text{for all } k \ge k_0.
    \]
    Multiplying this inequality by a non-negative test function $\sigma \in \D(0,T)$ ($\sigma \ge 0$) and integrating over $[0,T]$, we obtain
    \[
        \int_{0}^{T} \int_{\Omega} \de^{\epsilon_k}(t,x) \sigma(t) \dd x \dd t \ge (1-\delta) \int_{\Omega} \xi(x) \int_{0}^{T} \sigma(t) \dd x \dd t.
    \]
    Using the weak convergence \eqref{eq:weak-de} to pass to the limit as $k \to \infty$, gives 
    \[
        \int_{0}^{T} \int_{\Omega} \de(t,x) \sigma(t) \dd x \dd t \ge (1-\delta) \int_{\Omega} \xi(x) \int_{0}^{T} \sigma(t) \dd x \dd t.
    \]
    Taking the limit as $\delta \to 0^+$, yields
    \[
        \int_{0}^{T} \int_{\Omega} \de(t,x) \sigma(t) \dd x \dd t \ge \int_{\Omega} \xi(x) \int_{0}^{T} \sigma(t) \dd x \dd t.
    \]
    Since this holds for any nonnegative test function $\sigma \in \D(0,T)$, 
    \[
        \int_{\Omega} \de(t,x) \dd x \ge \int_{\Omega} \xi(x) \dd x,~\text{for \pp } t\in [0,T].
    \]
    We deduce that $\de(t, \cdot)$ is the maximal subsolution to \eqref{eq:eikonal-H}, that is  $\vert\nabla \de(t, \cdot)\vert = \he(\rho(t, \cdot))$ \pp in $\Omega$. 
    
    Finally, notice that since $\he(\rho^{\epsilon_k}) \to \he(\rho)$ strongly in $L^s(0,T; C(\Omega))$, we have $\vert\nabla \de^{\epsilon_k}\vert \to \vert\nabla \de\vert$ strongly in $L^s(0,T; C(\Omega))$, and thus in $L^s(0,T; L^s(\Omega))$. Because $\de^{\epsilon_k} = 0$ on $\Gamma_D$, Poincaré's inequality ensures that $\Vert\nabla u \Vert_{L^s((0,T)\times\Omega)}$ is an equivalent norm to $\Vert u\Vert_{L^s(0,T; W^{1,s}_D(\Omega))}$ on the space $L^s(0,T; W^{1,s}_D(\Omega))$. Since this space is uniformly convex for $1 < s < \infty$ (see, \textit{e.g.}, \cite[Theorem 3.6]{Adam&Fournier}) and 
    \[
        \limsup_{k \to \infty} \Vert\de^{\epsilon_k}\Vert_{L^s(0,T; W^{1,s}_D(\Omega))} \leq \Vert \de\Vert_{L^s(0,T; W^{1,s}_D(\Omega))},
    \]
    we conclude by the Radon-Riesz property (see, \textit{e.g.}, \cite[Proposition 3.32]{brezis-FASSPDE}) that 
    \[
        \de^{\epsilon_k} \to \de \quad \text{strongly in } L^s(0,T; W^{1,s}(\Omega)),
    \]
    which completes the proof.
   \end{proof}
\begin{proof}[Proof of \cref{thm:th2}]
    First, recall that by testing the regularized problem against $\xi \in \K$, we obtain, for \pp $t \in [t_i, t_{i+1})$,
    \begin{equation}\label{eq:xix1}
        \int_{\Omega} \partial_t \tilde{\rho}^{\epsilon} T_k(\beta^{-1}(\tilde{\rho}^{\epsilon})-\xi) \dd x + \int_{\Omega} \rho^{\epsilon} \nabla \de^{\epsilon} \cdot \nabla T_k(p^{\epsilon}-\xi) \dd x \le \int_{\Omega} f^{\epsilon} T_k(p^{\epsilon}-\xi) \dd x.
    \end{equation}
    Multiplying \eqref{eq:xix1} by a non-negative test function $\sigma \in C^{\infty}_{c}([0,T))$ and integrating by parts over $[0,T]$, we get
    \begin{equation}\label{eq:pet}
        \begin{aligned}
        &-\int_{0}^{T}\partial_t \sigma(t) \int_{\Omega} \left(\int_{0}^{\tilde{\rho}^{\epsilon}(t,x)}T_k(\beta^{-1}(s)-\xi) \dd s \right) \dd x \dd t  - \sigma(0) \int_{\Om}\int_{0}^{{\rho}_0(x)} T_{k}(\beta^{-1}(s) -\xi)\dd x\\
        & \int_{0}^{T}\int_{\Omega} \rho^{\epsilon} \nabla \de^{\epsilon} \cdot \nabla T_k(p^{\epsilon}-\xi)\sigma(t) \dd x \dd t \leq \int_{0}^{T} \int_{\Omega} f^{\epsilon} T_k(p^{\epsilon}-\xi) \sigma(t) \dd x \dd t.
        \end{aligned}
    \end{equation}
    We now pass to the limit as $\epsilon \to 0$ in each term of \eqref{eq:pet}. 
    
    Thanks to \cref{lem:rhoep}, the sequence $(\tilde{\rho}^{\epsilon})_{\epsilon>0}$ converges, up to a subsequence, to a function $\rho \in L^1(0,T; C(\overline{\Omega}))$. Since the sequence is uniformly bounded and the function $z \mapsto \int_0^z T_k(\beta^{-1}(s)-\xi) \dd s$ is continuous, Lebesgue's dominated convergence theorem ensures that the first term of \eqref{eq:pet} converges to 
    \[
        -\int_{0}^{T}\partial_t \sigma(t) \int_{\Omega} \left(\int_{0}^{\rho(t,x)} T_k(\beta^{-1}(s)-\xi) \dd s \right) \dd x \dd t.
    \]
   
    Regarding the second term on the left-hand side of \eqref{eq:pet}, we know from Lemmas \ref{lem:rhoep}, \ref{lem:peps}, and \ref{lem:Deps} that the sequences $(\rho^{\epsilon_j})_{j}$, $(p^{\epsilon_j})_{j}$, and $(\de^{\epsilon_j})_{j}$ converge respectively to $\rho$, $p$, and $\de$, where $\rho=\beta(p)$ and $\de$ is the maximal subsolution of \eqref{eq:eikonal-H}. Since $T_k$ is Lipschitz continuous, the gradients $\nabla T_k(p^{\epsilon_j} -\xi)$ are uniformly bounded in $L^\infty(Q)$, and thus they converge weakly-* in $L^\infty(Q)$ and weakly in any $L^s(Q)$ to $\nabla T_k(p-\xi)$. By combining this with the strong convergence of $\rho^{\epsilon_j} \to \rho$ in $L^s(0,T; C(\overline{\Omega}))$ and $\nabla \de^{\epsilon_j} \to \nabla \de$ strongly in $L^s(0,T; L^s(\Omega))$, we obtain
    \begin{equation*}
        \lim_{j \to \infty} \int_{0}^{T}\int_{\Omega} \rho^{\epsilon_j} \nabla \de^{\epsilon_k} \cdot \nabla T_k(p^{\epsilon_j}-\xi)\sigma(t) \dd x \dd t = \int_{0}^{T}\int_{\Omega} \rho \nabla \de \cdot \nabla T_k(p-\xi)\sigma(t) \dd x \dd t. 
    \end{equation*}
    Finally, since $f^{\epsilon_j} \to f$ strongly in $L^1(0,T; \Lip_D^*(\Om))$ and the test function $T_k(p^{\epsilon_j}-\xi)\sigma(t)$ is uniformly bounded in $L^\infty(0,T; \Lip_D(\Omega))$ and converges to $T_k(p-\xi)\sigma(t)$, passing to the limit as $j\to\infty$ on the right-hand side of \eqref{eq:pet}, yields
    \begin{equation*}
        \lim_{j \to \infty} \int_{0}^{T} \langle f^{\epsilon_j}, T_k(p^{\epsilon_j}-\xi) \rangle_{\Lip_D^*, \Lip_D} \sigma(t) \dd t = \int_{0}^{T} \langle f, T_k(p-\xi) \rangle_{\Lip_D^*, \Lip_D} \sigma(t) \dd t.
    \end{equation*}
   Consequently, passing to the limit $\epsilon \to0$ in \eqref{eq:pet}, we obtain that for any $\xi \in \K$ and any nonnegative test function $\sigma \in C^{\infty}_{c}([0,T))$, 
    \begin{equation}
        \begin{aligned}
        -\int_{0}^{T}\partial_t \sigma(t) \int_{\Omega} &\left( \int_{0}^{\rho(t,x)} T_k(\beta^{-1}(s)-\xi) \dd s \right) \dd x \dd t  -\sigma(0) \int_{\Om}\int_{0}^{\rho_{0}(x)} T_{k}(\beta^{-1}(s) -\xi)\dd x \\
        & \int_{0}^{T}\int_{\Omega} \rho \nabla \de \cdot \nabla T_k(p-\xi)\sigma(t) \dd x \dd t \le \int_{0}^{T} \langle f, T_k(p-\xi) \rangle_{\Lip_D^*, \Lip_D} \sigma(t) \dd t.
        \end{aligned}
    \end{equation}
   In other words, $(\rho, p, \de)$ is a variational solution to \eqref{eq:regularized-model} in the sense of \cref{thm:th2}. This concludes the proof.
\end{proof}
			
\section{Numerical approximation}\label{section:4}
We follow the main lines of the prediction-correction approach developed in \cite{ennaji2023prediction}. It is worth noting that while the theoretical analysis is focused on the regularized model \eqref{eq:regularized-model}, our proposed numerical scheme is flexible enough to directly approximate and simulate the hard congestion Hughes' model \eqref{evolution223} in the presence of the maximal monotone $\Signp$ graph.
\subsection{Domain discretization}
We consider a spatial domain $\Omega$ representing a room, with its boundary partitioned as $\partial \Omega = \Gamma_N \cup \Gamma_D$. Here, $\Gamma_D$ corresponds to the exit doors, and $\Gamma_N$ represents the impenetrable walls, as illustrated in \cref{fig:domain}. 

\begin{figure}[H]
	\centering
	\includegraphics[scale=0.27]{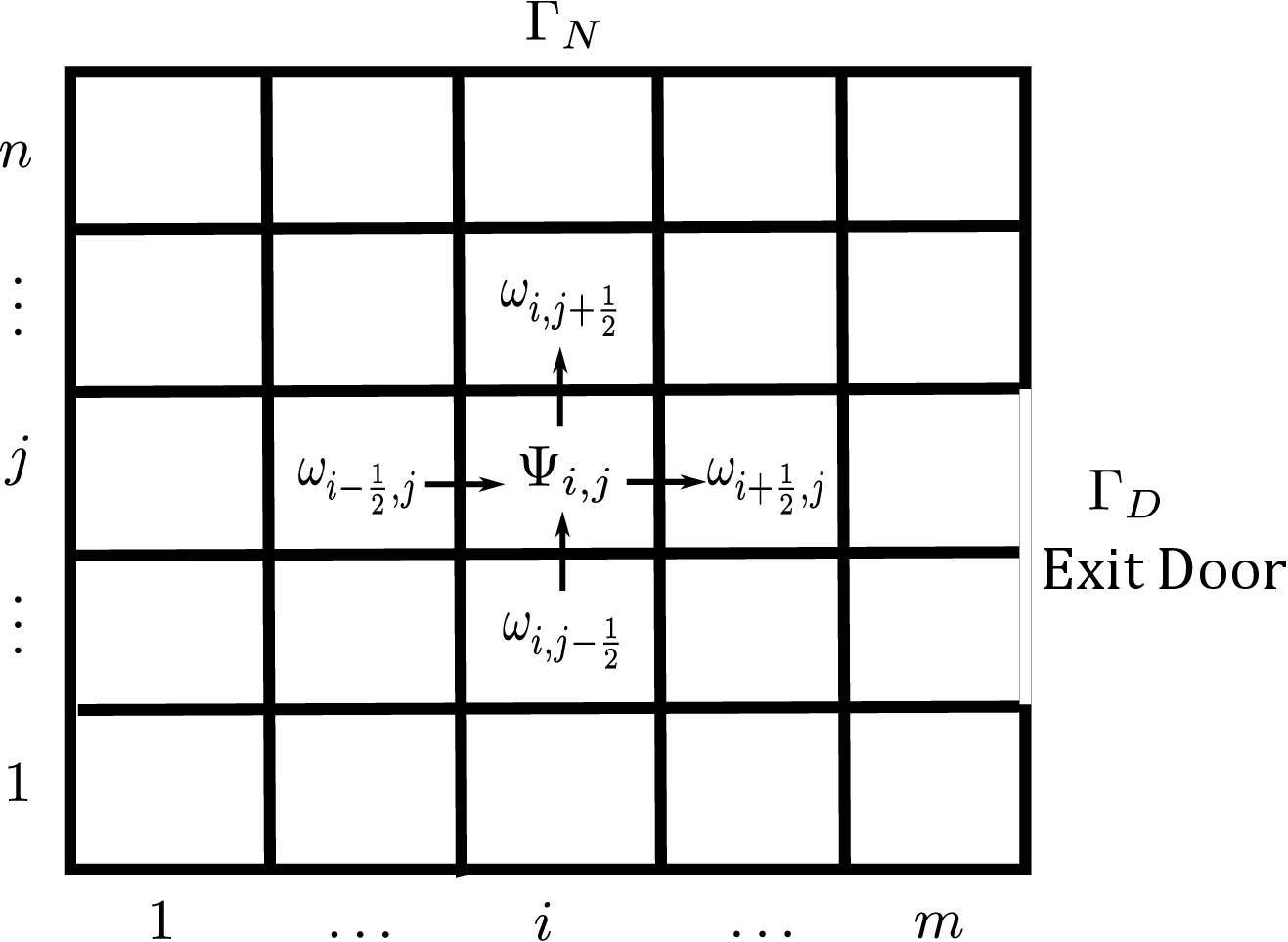}
	\caption{Discretization of the domain $\Omega$.}
	\label{fig:domain}
\end{figure}

We discretize $\Omega$ into a Cartesian grid of $m \times n$ square control volumes of side length $h$. For $1 \leq i \leq m$ and $1 \leq j \leq n$, let $C_{i,j}$ denote the cell at position $(i,j)$. To build our finite-volume scheme, we distinguish between cell-centered and face-centered variables. We denote by $\Psi_{i,j}$ the cell-averaged value of a generic scalar field $\Psi$ over $C_{i,j}$, while variables denoted by $w_{i-\frac{1}{2},j}$ and $w_{i+\frac{1}{2},j}$ are evaluated at the cell interfaces and typically represent numerical fluxes or gradients across the boundaries of $C_{i,j}$.

Depending on the considered step of our numerical scheme, these generic discrete quantities take on different physical and mathematical meanings.

\begin{itemize}
	\item In the prediction step: $\Psi_{i,j}$ represents the intermediate population density $\rho^{k+\frac{1}{2}}$, while the interface values correspond to the components of the transport flux $\rho V$.	
	\item In the correction step (primal problem): $\Psi_{i,j}$ represents the newly corrected density $\rho^{k+1}$, and the interface variables correspond to the components of the decongestion flux $\Phi$. 
    \item In the correction step (dual problem): $\Psi_{i,j}$ represents the congestion pressure $p$ (the dual variable), and the interface values correspond to the components of its discrete gradient $\nabla_h p$.
\end{itemize}
\subsection{Operator Splitting Method}
We aim to approximate the solution of \eqref{eq:regularized-model} using an operator splitting scheme. We discretize the time interval $[0,T]$ into sub-intervals of the form $[t_k,t_{k+1}]$, where $t_k = k\tau$ for $k=0,\dots,N-1$, and $\tau > 0$ is the time step size.

Given the density $\rho^k$ and the congestion pressure $p^k$ at time $t_k$, the updated variables $\rho^{k+1}$ and $p^{k+1}$ at time $t_{k+1}$ are computed in two steps: a \emph{prediction} step (or transport) followed by a \emph{correction} step (or decongestion). To do so, we introduce an intermediate time $t_{k+\frac{1}{2}}$.

\begin{itemize}
	\item \textbf{Prediction step:} Starting from the current density $\rho^k$, we compute an intermediate predicted density $\rho^{k+\frac{1}{2}}$ by solving the continuity equation. We denote by $\tilde{\rho}$ the solution of the transport sub-problem
	\begin{equation}\label{H11}
		\left\{
		\begin{array}{ll}			
			\partial_t \tilde{\rho} + \dive(\tilde{\rho}\: V^k ) = 0 \quad & \hbox{in } (t_k, t_{k+\frac{1}{2}}] \times \Omega, \\ 
			\tilde{\rho}(t_{k}, \cdot) = \rho^k,
		\end{array}
		\right.
	\end{equation}
	and we set the predicted density as $\rho^{k+\frac{1}{2}} = \tilde{\rho}(t_{k+\frac{1}{2}}, \cdot)$. The advection field is given by $V^k=-\nabla \de^k$, where the potential $\de^k$ solves the Eikonal equation at instant $t_k$,
	\begin{equation}
	\left\lbrace\begin{array}{ll}
		\vert \nabla \de^k \vert = \he(\rho^k) & \hbox{in }\Om,\\ 
		\de^k = 0 & \hbox{on } \Gamma_D.
	\end{array}
	\right.
	\end{equation}

	\item \textbf{Correction step:} The predicted density $\rho^{k+\frac{1}{2}}$ might violate the maximal density constraint. To obtain an admissible density at time $t_{k+1}$, we correct $\rho^{k+\frac{1}{2}}$ by considering the decongestion part of our modified Hughes' model, taking $\rho^{k+\frac{1}{2}}$ as the initial condition for the following problem:
	 	\begin{equation} \label{eq_hu}	\left\{
		\begin{array}{ll}
			\left. 	\begin{array}{l}
				\partial_t\bar{\rho} -\dive(\Phi)  =0,   \\  \\
				\Phi=m \nabla p, \: m\geq0, \: \vert \nabla p\vert \leq 1,\:  m(1-\vert \nabla p\vert)=0\\   \\
				\bar{\rho}  =   \beta(p) 
			\end{array} \right\}  \quad & \hbox{ in } [t_{k+\frac{1}{2}},t_{k+1}[ \times \Omega,\\  \\
			\Phi \cdot \nu =0  & \hbox{ on }[t_{k+\frac{1}{2}},t_{k+1}[ \times\Gamma_N,\\  \\
			p=0   & \hbox{ on }[t_{k+\frac{1}{2}},t_{k+1}[ \times\Gamma_D, \\ \\
			\bar{\rho}(t_{k+\frac{1}{2}})=\tilde{\rho}_{k+\frac{1}{2}}.
		\end{array}
		\right.
	\end{equation} 

\end{itemize}

By applying an implicit Euler time discretization to the continuity equation with time step $\tau$, we obtain the following stationary system for the updated variables $(\rho^{k+1}, \Phi^{k+1}, p^{k+1})$ at time $t_{k+1}$:
\begin{equation} \label{eq_hu2}
    \left\{
    \begin{array}{ll}
        \rho^{k+1} - \tau \:\dive(\Phi^{k+1}) = \rho^{k+\frac{1}{2}} & \hbox{in } \Omega,\\ 
        \Phi^{k+1} = m \nabla p^{k+1}, \quad m\geq0, \quad \vert \nabla p^{k+1}\vert \leq 1,\quad m(1-\vert \nabla p^{k+1}\vert)=0 & \hbox{in } \Omega,\\ 
        \rho^{k+1} = \beta(p^{k+1}) & \hbox{in } \Omega,\\ 
        \Phi^{k+1} \cdot \nu = 0 & \hbox{on } \Gamma_N,\\ 
        p^{k+1} = 0 & \hbox{on } \Gamma_D.
    \end{array}
    \right.
\end{equation}

Using the results of \cite{ennaji2023prediction}, the solutions $\rho^{k+1}$ and $p^{k+1}$ of \eqref{eq_hu2} can be recast as the solutions of the Beckmann-like optimization problem
\begin{equation}\label{minproc22}
    \begin{array}{l}
        \inf\left \{  \int_\Omega \tau \vert\Phi(x) \vert\: \dd x +  \int_{\Omega} \tau \BB(\rho(x))\dd x   \: :\: \rho \in L^\infty(\Omega), \right.  \\  
        \left. \hspace*{1.5cm}  \Phi\in (L^1(\Omega))^N,\: \Phi \cdot \nu=0 \hbox{ on } \Gamma_N,\:    -\tau\dive(\Phi) = \rho^{k+\frac{1}{2}} -\rho    \hbox{ in  }  \Omega \right\},
    \end{array}
\end{equation}
where $\BB(r) = \int_{0}^{r}\beta(s)\dd s$. Of course, the main difference between \eqref{minproc22} and the exact problem considered in \cite{ennaji2023prediction} is the presence of the smooth functional $\BB(\rho)$, instead of the rigid indicator function $\I_{[0,1]}(\rho)$.

Notice that the corresponding dual problem associated with \eqref{minproc22} reads
\begin{equation}\label{dual221}
    \min_{z}\left\{ \int_{\Om} \tau \BB^{*}(z)\dd x  - \int_\Om  z\:  \rho^{k+\frac{1}{2}} \: \dd x \: :\:  z\in W^{1,\infty}_{D}(\Omega)~\hbox{and}~\vert\nabla z \vert \leq 1~\pp\right\}.
\end{equation}

In the next section, we present the implementation of the proposed splitting method to compute the density $\rho$, which is the solution of the problem \eqref{eq:regularized-model}. \\
\subsection{Discretization of the transport equation \eqref{H11}}
In the prediction step, we compute the intermediate density $\rho^{k+\frac{1}{2}} = \tilde{\rho}(t_{k+\frac{1}{2}}, \cdot)$ by solving the continuity equation
\begin{equation}
	\left\{
	\begin{array}{ll}
		\partial_t\tilde{\rho} + \dive(\tilde{\rho}\: V^k )=0\quad & \hbox{ in } (t_k, t_{k+\frac{1}{2}}] \times \Omega, \\  \\
		\tilde{\rho}(t_{k}, \cdot)=\rho^k,
	\end{array}
	\right.
\end{equation}
where $V^k = -\nabla \de^k$ is the velocity field evaluated at time $t_k$. This can be rewritten in scalar form as 	
\begin{equation} \label{VF1}
	\left\{
	\begin{array}{ll}
		\partial_t\tilde{\rho}(t,x,y) + \partial_x \tilde{\FF}(t,x,y) + \partial_y \tilde{\GG}(t,x,y)=0\quad & \hbox{ in } (t_k, t_{k+\frac{1}{2}}] \times \Omega, \\   \\
		\tilde{\rho}(t_{k}, x, y)=\rho^k(x, y),
	\end{array}
	\right.
\end{equation}
with the flux components given by $\tilde{\FF} = \tilde{\rho} V^k_x$ and $\tilde{\GG} = \tilde{\rho} V^k_y$.

We discretize equation \eqref{VF1} by combining a Finite Volume Method (FVM) in space and an explicit Euler scheme in time \cite{finitevolumeGallouet}. This leads to the following update:
\begin{equation*}
	\frac{\rho_{i,j}^{k+\frac{1}{2}}-\rho_{i,j}^{k}}{\tau} + \frac{1}{h} \left(\tilde{\FF}^k_{i+\frac{1}{2},j}-\tilde{\FF}^k_{i-\frac{1}{2},j}\right) + \frac{1}{h} \left(\tilde{\GG}^k_{i,j+\frac{1}{2}}-\tilde{\GG}^k_{i,j-\frac{1}{2}}\right)=0,
\end{equation*}
where $\tilde{\FF}^k_{i\pm\frac{1}{2},j}$ and $\tilde{\GG}^k_{i,j\pm\frac{1}{2}}$ are the numerical fluxes evaluated at the interfaces of the cell $C_{i,j}$ (see \cref{fig:domain}). 

To ensure numerical stability and prevent oscillations, these numerical fluxes must be carefully approximated using upwind-type schemes. Well-known examples include the Godunov, Lax-Wendroff, and Rusanov (also known as local Lax-Friedrichs) schemes \cite{finitevolumeGallouet, Rusanovflux}. In this work, we employ the Rusanov approximation. Since the advection velocity $V^k$ is fixed during the prediction step, the transport equation is linear with respect to the density. Therefore, the local wave speed at the interface is determined solely by the magnitudes of the velocity components. The numerical flux at the interface $(i+\frac{1}{2}, j)$ is given by
\begin{equation}\label{eq:rusanov}
\FF_{i+\frac{1}{2},j}^{k,\textup{Rus}} = \frac{1}{2}\left(\tilde{\FF}^k_{i,j} + \tilde{\FF}^k_{i+1,j}\right) - \frac{\lambda_{i+\frac{1}{2},j}^x}{2}\left(\rho^k_{i+1,j}-\rho^k_{i,j}\right),
\end{equation}
where $\tilde{\FF}^k_{i,j} = \rho^k_{i,j} (V^k_x)_{i,j}$ and the local maximum wave speed is defined as 
\[
\lambda_{i+\frac{1}{2},j}^x = \max\left(\vert (V^k_x)_{i,j} \vert , \vert (V^k_x)_{i+1,j} \vert\right).
\]
A similar formulation is applied for the $y$-directional flux $\GG^{k,\textup{Rus}}_{i,j+\frac{1}{2}}$.

To sum up, the explicit implementation of the prediction step reads
\begin{equation}\label{t2}
	\rho_{i,j}^{k+\frac{1}{2}} = \rho_{i,j}^{k} - \frac{\tau}{h} \left(\FF_{i+\frac{1}{2},j}^{k,\textup{Rus}}-\FF_{i-\frac{1}{2},j}^{k,\textup{Rus}}\right) - \frac{\tau}{h} \left({\GG}^{k,\textup{Rus}}_{i,j+\frac{1}{2}}-{\GG}^{k,\textup{Rus}}_{i,j-\frac{1}{2}}\right).
\end{equation}
\subsection{Discretization of the minimum flow problem \eqref{minproc22}}

Recall that the density $\rho^{k+\frac{1}{2}}$ obtained in the prediction step is corrected by solving the minimum flow problem
\begin{equation}
    \label{beckmann1}
    \inf_{(\rho,\Phi)}\left\{  \int_\Om\tau\vert\Phi(x)\vert\dd x + \int_{\Om}\tau \BB(\rho(x))\dd x:~-\tau \:\dive(\Phi) = \rho^{k+\frac{1}{2}} - \rho\hbox{ in }  \Omega,\ \Phi\cdot \nu=0\hbox{ on }\Gamma_N \right\}.
\end{equation}
Here, the congestion constraint is entirely handled by the potential functional $\BB(\rho)$. This general formulation encompasses both our models:
\begin{itemize}
    \item For \eqref{eq:regularized-model}, $\BB(\rho) = \int_0^\rho \beta(s)\dd s$, where $\beta$ satisfies \cref{assumption:1}.
    \item For \eqref{evolution223} , $\BB(\rho) = \I_{[0,1]}(\rho)$, which acts as a hard constraint enforcing $0 \leq \rho \leq 1$.
\end{itemize}

The problem \eqref{beckmann1} can be recast in the abstract optimization form
\begin{equation}
    \leqnomode
    \tag{M}
    \label{formulation}
    \min_{(\rho,\Phi)} \A(\rho,\Phi) +  \I_{\CC} (\Lambda(\rho,\Phi)),
\end{equation}
where $\A(\rho,\Phi) =\int_\Om \tau\vert\Phi(x)\vert\dd x + \int_{\Om}\tau \BB(\rho(x))\dd x$, the linear operator is given by $\Lambda(\rho,\Phi)  = \rho-\tau\dive\Phi$, and the constraint set is $\CC = \{ \rho^{k+\frac{1}{2}} \}$.

Based on the discrete gradient and divergence operators defined in \cref{section:discrete_op}, we propose a fully discrete version of \eqref{formulation}, denoted $\M_d$:
\begin{equation}
    \label{P1}
    \min_{(\rho,\Phi)} \A_h(\rho,\Phi) + \B_h(\Lambda_h(\rho,\Phi)),
\end{equation}
where the discrete functionals are defined by
\begin{equation}
    \label{eq:functionals_h}
    \A_h(\rho,\Phi) = h^2\sum_{i=1}^{m}\sum_{j=1}^{n}\left( \tau\Vert\Phi_{i,j}\Vert  + \tau\BB(\rho_{i,j}) \right) \quad \mbox{and} \quad \B_h=\I_{\CC_h},
\end{equation}
with $\CC_h:=\left\{ (a_{i,j})\: : \: a_{i,j} = \rho_{i,j}^{k+\frac{1}{2}},~~ \forall (i,j) \right\}$ and $\Lambda_h (\rho,\Phi) = \rho -\tau\dive_h\Phi$.
By introducing the dual variable $p$ (which acts as the congestion pressure), \eqref{P1} can be written in a primal-dual saddle-point form
\begin{equation}
    \label{eq:P1PD}
    \min_{(\rho,\Phi)}\max_{p} \A_h(\rho,\Phi) + \langle p, \Lambda_{h}(\rho,\Phi)\rangle - \B_{h}^{*}(p).
\end{equation}

This saddle-point problem can be efficiently solved using the Chambolle-Pock primal-dual algorithm (PD) \cite{chambolle2011first}. The algorithm requires computing the proximal operators for $\A_h$ and $\B_h^*$. Since $\A_h$ is separable in its variables, its proximal operator decouples into
\begin{equation}
    \label{eq:prox_F}
    \left(\prox_{\sigma\A_h}(\rho,\Phi)\right)_{i,j} = \left( \prox_{\sigma\tau\BB}(\rho_{i,j}), \ \max\left(0, 1 - \frac{\sigma\tau}{\vert\Phi_{i, j}\vert}\right)\Phi_{i, j} \right).
\end{equation}
The computation of $\prox_{\sigma\tau\BB}(\rho)$ depends on the chosen model:
\begin{itemize}
    \item\eqref{evolution223}: $\BB = \I_{[0,1]}$, thus the proximal operator is the standard projection onto the unit interval
    \[
     \prox_{\sigma\tau\BB}(r) = \max(0, \min(1, r)). 
    \]
    \item\eqref{eq:regularized-model}: For a smooth $\beta_\epsilon$, the proximal step reads $\prox_{\sigma\tau\BB}(r) = \argmin_{q} \frac{1}{2}(q-r)^2 + \sigma\tau\BB(q)$. The first-order optimality condition yields the nonlinear equation
    \begin{equation}\label{eq:prox_BB_OC}
        q + \sigma\tau\beta_{\epsilon}(q) = r.
    \end{equation}
    Since $q \mapsto \beta_{\epsilon}(q)$ is strictly monotonically increasing, the function $q \mapsto q+\sigma\tau\beta_{\epsilon}(q)$ is strictly increasing, ensuring that \eqref{eq:prox_BB_OC} admits a unique solution $q$, which can be easily computed using Newton's method.
\end{itemize}

As for the dual functional $\B_{h}^{*}$, we make use of Moreau's identity $p = \prox_{\alpha\B^*_h}(p) + \alpha\prox_{\alpha^{-1}\B_h}(p/\alpha)$ and the fact that $\B_h$ is the indicator of $\CC_h$, which yields
\begin{equation}
    \left(\prox_{\alpha\B^*_h}(p)\right)_{i,j} = p_{i,j} - \alpha\proj_{\CC_{i,j}}(p_{i,j}/\alpha) = p_{i,j} - \alpha \rho_{i,j}^{k+\frac{1}{2}}.
\end{equation}

In summary, denoting $\sigma$ and $\alpha$ the primal and dual step sizes respectively, the proposed algorithm for the correction step is formulated as follows:

\begin{algorithm}[H]
    \caption{Chambolle-Pock (PD) iterations for the Correction Step}
    \label{alg:pd}
    \begin{algorithmic}[1]
        \State $\textbf{Initialization:}$ Choose stepsizes $\sigma, \alpha > 0$ such that $\sigma\alpha\Vert\Lambda_{h}\Vert^2 < 1$. Set $l=0$, initialize primal variables $(\rho^0, \Phi^0)$, and dual variables $p^0 = \bar{p}^0 = p_0$.
        \State $\textbf{Primal step:}$ Update density and fluxes:
        \begin{equation*}
            \begin{aligned}
                \rho_{i,j}^{l+1} &= \prox_{\sigma\tau\BB}\left( \rho_{i,j}^{l} - \sigma \bar{p}^{l}_{i,j} \right) \\
                \Phi_{i, j}^{l+1} &= \max\left(0, 1 - \frac{\sigma\tau}{\vert\Phi_{i, j}^{l} - \sigma\tau\nabla_h \bar{p}_{i,j}^{l}\vert}\right) \left(\Phi_{i, j}^{l} - \sigma\tau\nabla_h \bar{p}_{i,j}^{l}\right).
            \end{aligned}
        \end{equation*}
        \State $\textbf{Dual step:}$ Update the congestion pressure:
        \begin{equation*}
            \begin{aligned}
                v^{l+1}_{i,j} &= p^{l}_{i,j} + \alpha \left( \rho^{l+1}_{i,j} - \tau\dive_h(\Phi^{l+1})_{i,j} \right) \\
                p^{l+1}_{i,j} &= v^{l+1}_{i,j} - \alpha \rho_{i,j}^{k+\frac{1}{2}}.
            \end{aligned}
        \end{equation*}
        \State $\textbf{Extragradient step:}$ Update the extrapolated dual variable:
        \begin{equation*}
            \bar{p}^{l+1} = 2p^{l+1} - p^{l}.
        \end{equation*}
        \State Set $l \gets l+1$ and repeat until convergence.
    \end{algorithmic}
\end{algorithm}
\cref{alg:pd} allows calculating the corrected density $\rho^{k+1}$ in the primal step, and the pressure $p^{k+1}$ in the dual step. The updated pressure $p^{k+1}$ is then used to compute the updated velocity field $V$ for the next time loop.

\begin{remark}
 Notice that in \cref{alg:pd}, if $\Vert\nabla_h p^{l+1}\Vert<1$ is given in a region, then the proximal step for the flux gives $\Phi = 0$. Consequently, one has $\rho^{k+1} = \rho^{k+\frac{1}{2}}$, \ie in this scenario, the correction step leaves the intermediate density unchanged, which means that the numerical scheme reduces to a standard discretization of the Hughes' variant model \eqref{eq:CL-intro}-\eqref{VH1}-\eqref{VH2}.
\end{remark}

\section{Numerical simulations}\label{sm_Hughes}
This section presents numerical experiments to illustrate the effectiveness of our approach. These simulations allow for a comparison among three distinct formulations: the constant-velocity model \eqref{evolgran0} proposed in \cite{ennaji2023prediction}, the soft congestion model \eqref{eq:regularized-model}, and the hard congestion model \eqref{evolution223} introduced in this paper.

\subsection{Setting and configuration}

\subsubsection{Choice of the velocity field}
In all scenarios, we use the macroscopic velocity $V = -\nabla\de$ where $\de$ solves the Eikonal equation \eqref{eq:V-model}. For the constant velocity model \eqref{evolgran0}, we assume that $\he(\rho) \equiv 1$, meaning that $\de$ is simply the Euclidean distance to the exit set $\Gamma_D$. 
For both the soft congestion model \eqref{eq:regularized-model} and the hard congestion variant \eqref{evolution223}, we make use of the exponential cost function $\he(\rho) = e^{\lambda \rho}$ with a sensitivity parameter $\lambda >0$, which is suitable as it smoothly penalizes regions with high pressure. We discuss in \cref{subsection:comp_H,subsection:impact_H} the impact of the parameter $\lambda$ and compare with other costs.
\subsubsection{The choice of the soft congestion constitutive law $\beta$}
To implement the numerical scheme, we must define an explicit choice of the law $\beta(p)$ introduced in the \eqref{eq:regularized-model} model. To this end, we introduce a family of functions $\beta_{\delta}$, where the parameter $\delta>0$ dictates the stiffness of the congestion.
\begin{figure}[htbp]
    \centering
    
    \begin{tikzpicture}
        \begin{axis}[
            width=0.44\textwidth,
            height=0.4\textwidth,
            xlabel={$p$},
            ylabel={$\beta^{\text{MY}}_{\delta}(p)$},
            domain=-2:2,
            samples=300,
            ymin=-0.2, ymax=1.2,
            axis lines=middle,
            xtick={-2,-1,0,1,2},
            ytick={0,1},
            enlargelimits=true
        ]
            \addplot[red, thick] {min(1, max(0, x/0.5))};
            
            \addplot[green, thick] {min(1, max(0, x/0.2))};
            
            \addplot[black, thick] {min(1, max(0, x/0.1))};
            
            \addplot[blue, thick] {min(1, max(0, x/0.02))};
        \end{axis}
    \end{tikzpicture}
    \begin{tikzpicture}
        \begin{axis}[
            width=0.44\textwidth,
            height=0.4\textwidth,
            xlabel={$p$},
            ylabel={$\beta^{1}_{\delta}(p)$},
            domain=-2:2,
            samples=300,
            ymin=-0.2, ymax=1.2,
            axis lines=middle,
            xtick={-2,-1,0,1,2},
            ytick={0,1},
            enlargelimits=true
        ]
            \addplot[red, thick] {x < 0 ? 0 : x / (x + 0.5)};
            \addplot[green, thick] {x < 0 ? 0 : x / (x + 0.2)};
            \addplot[black, thick] {x < 0 ? 0 : x / (x + 0.1)};
            \addplot[blue, thick] {x < 0 ? 0 : x / (x + 0.02)};
        \end{axis}
    \end{tikzpicture}
    \begin{tikzpicture}
        \begin{axis}[
            width=0.44\textwidth,
            height=0.4\textwidth,
            xlabel={$p$},
            ylabel={$\beta^{2}_{\delta}(p)$},
            domain=-2:2,
            samples=300,
            ymin=-0.2, ymax=1.2,
            axis lines=middle,
            xtick={-2,-1,0,1,2},
            ytick={0,1},
            enlargelimits=true,
            legend pos=south east,
            legend style={font=\footnotesize}
        ]
            \addplot[red, thick] {x < 0 ? 0 : tanh(x/0.5)};
            \addlegendentry{$\delta=0.5$}
            
            \addplot[green, thick] {x < 0 ? 0 : tanh(x/0.2)};
            \addlegendentry{$\delta=0.2$}
            
            \addplot[black, thick] {x < 0 ? 0 : tanh(x/0.1)};
            \addlegendentry{$\delta=0.1$}
            
            \addplot[blue, thick] {x < 0 ? 0 : tanh(x/0.02)};
            \addlegendentry{$\delta=0.02$}
        \end{axis}
    \end{tikzpicture}
    
    \caption{Left: The Moreau-Yosida approximation of $\text{Sign}^+$. Middle: The homographic approximation $\beta^{1}_{\delta}(p) = \frac{\max(0,p)}{\max(0,p)+\delta}$. Right: The hyperbolic tangent approximation $\beta^{2}_{\delta}(p) = \max\left(0, \tanh\left(\frac{p}{\delta}\right)\right)$. Both regularizations $\beta^1_\delta$ and $\beta^2_\delta$ assign zero value for negative and null pressures.}
    \label{fig:tanh-approx}
\end{figure}

A standard choice in the literature is to use the sigmoid or the arctangent functions. However, as illustrated in \cref{fig:smooth-approx}, these regularizations satisfy $\beta_\delta(0) = 1/2$. In our model, the pressure satisfies a homogeneous Dirichlet boundary condition $p=0$ on the exit doors $\Gamma_D$. Consequently, using such functions for $\beta$ would artificially force the density to $\rho = 1/2$ at the exits, creating a non-physical boundary layer that restricts the evacuation flow. To avoid this numerical artifact and in compliance with our theoretical assumptions (\cf  \cref{assumption:1}), we must select a constitutive law satisfying $\beta(0) = 0$. 
While the exact Moreau-Yosida approximation (see \cref{fig:tanh-approx}, Left) achieves this, its lack of continuous differentiability and strict monotonicity makes it unsuitable for Newton-based solvers. Thus, we propose using the hyperbolic tangent regularization by taking $\beta(p) = \beta_{\delta}(p)$ where (see \cref{fig:tanh-approx}, Right):
\begin{equation}\label{eq:tanh}
    \beta_\delta(p) = \max\left(0,\tanh\left(\frac{p}{\delta}\right)\right).
\end{equation}
It is worth noting that even though the maximum operator in \eqref{eq:tanh} introduces a kink at $0$, we use in practice $\beta_\delta(p) = \tanh\left(\frac{p}{\delta}\right)$. This is justified by the fact that the pressure $p$, acting as a dual variable associated with the density $\rho$, satisfies $p\geq 0$. Thus, in practice, the model operates in the regime where $\beta(p) = \tanh\left(\frac{p}{\delta}\right)$. Therefore, we can restrict our theoretical and numerical analysis to $\mathbb{R}^+$. On any compact subset $[0, M] \subset \mathbb{R}^+$, the proposed function $\beta_\delta$ is smooth, correctly enforces the boundary condition $\beta_\delta(0)=0$, and maintains a strictly positive derivative, thus satisfying the required bi-Lipschitz property for our framework.

\subsubsection{Boundary condition} While Neumann boundary conditions acting on the total flux remain essentially unchanged for both hard and soft congestion regimes, Dirichlet boundary conditions become more delicate to handle in the soft congestion framework. Indeed, the prediction-correction framework intrinsically requires the introduction of an outlet boundary condition on the exit region $\Gamma_D$. In the hard-congestion setting, the condition
\[
p=0
\quad \text{on } \Gamma_D
\]
naturally drives pedestrians toward the exits while keeping the density unconstrained at the boundary, namely
\[
0\leq \rho \leq 1
\quad \text{on } \Gamma_D.
\]
Consequently, pedestrians are allowed to leave the computational domain without imposing any prescribed density at the exit. 

The soft-congestion case is more delicate. Indeed, since the pressure is explicitly related to the density through a constitutive law of the form $\rho=\beta(p)$, any boundary condition imposed on the pressure automatically induces a boundary condition on the density. This may significantly restrict the range of admissible evacuation scenarios and reduce the model's flexibility. Moreover, from a numerical standpoint, imposing outflow conditions together with an outward velocity field may generate boundary-layer effects and spurious density accumulations near the exits.

To overcome these difficulties, we introduce auxiliary fictitious cells (often referred to as ghost cells) outside the computational domain, in the vicinity of the exit region. The Dirichlet conditions are then imposed on these external cells rather than directly on $\Gamma_D$. This construction creates an artificial evacuation buffer, allowing pedestrians to progressively leave the room while avoiding undesirable boundary effects and preserving the free evolution of the density at the physical boundary.
\subsubsection{Domain discretization and primal-dual parameters}
We assume that the crowd moves in a room represented by the domain $\Omega=[0,1]^2$, discretized with a rectangular grid with a mesh size $h = 0.02$. As for the time variable, we use a timestep $\tau = 0.006$. 
To adjust the parameters in \cref{alg:pd}, we use the approximation $\Vert \Lambda_h\Vert \simeq \sqrt{1 + 8/h^2}:=\LL$ and take $\sigma = \alpha = 0.99/\LL$.
\subsection{Numerical tests}\label{section:tests}
We perform several tests across different scenarios\footnote{Demonstration videos are available at \url{https://github.com/enhamza/PC-Hughes-Model}}: different initial densities, single or multiple exits. We also provide tests demonstrating the influence of the velocity field model \eqref{eq:V-model} on crowd dynamics by considering different values of $\lambda$. 

\subsubsection{Evacuation of a room with one exit.}
We begin by comparing the behavior of the models in a standard evacuation setup. The crowd is evacuated from the square room $\Omega=[0,1]^2$ through a single exit located on the right boundary, defined by
\begin{equation}
    \Gamma_D = \{1\} \times [0.4, 0.6].
\end{equation}
The maximal evacuation time is set to $T=2$.
We consider two distinct initial density profiles to test the models' ability to handle both discontinuous (patch-like) and smooth distributions.

\begin{itemize}
    \item \textbf{Scenario 1: Two Groups.}
    The initial density represents two distinct groups of agents positioned at the top-left and bottom-left corners. It is defined by a scaled indicator function \begin{equation}\label{eq:rho0_scenario1}
    	\rho_{0}(x,y) = 0.9 \cdot \chi_{S}(x,y),
    	   \end{equation}
    where $S$ is the union of two rectangular regions 
    \[
    S = \left([0, 0.5] \times [0, 1/3]\right) \cup \left([0, 0.5] \times [2/3, 1]\right).
    \]
    \item \textbf{Scenario 2: Gaussian Mixture.}
    The initial density is given by a sum of three unnormalized Gaussian functions centered at $\mathbf{c}_1=(0.2, 0.2)$, $\mathbf{c}_2=(0.2, 0.8)$, and $\mathbf{c}_3=(0.75, 0.5)$: 
        \begin{equation}\label{eq:rho0_scenario2}
        \rho_0(x,y) = \sum_{i=1}^3 \exp\left(-\frac{\Vert(x,y)-\mathbf{c}_i\Vert^2}{0.02}\right).
    \end{equation}
    This configuration allows us to observe the merging dynamics of sub-groups with smooth interfaces.
\end{itemize}
For the cost function $\he(\rho) = e^{\lambda \rho}$, we fix the sensitivity parameter to $\lambda = 2.75$.
 \begin{figure}[H]
	\centering
	\includegraphics[width=0.8\columnwidth]{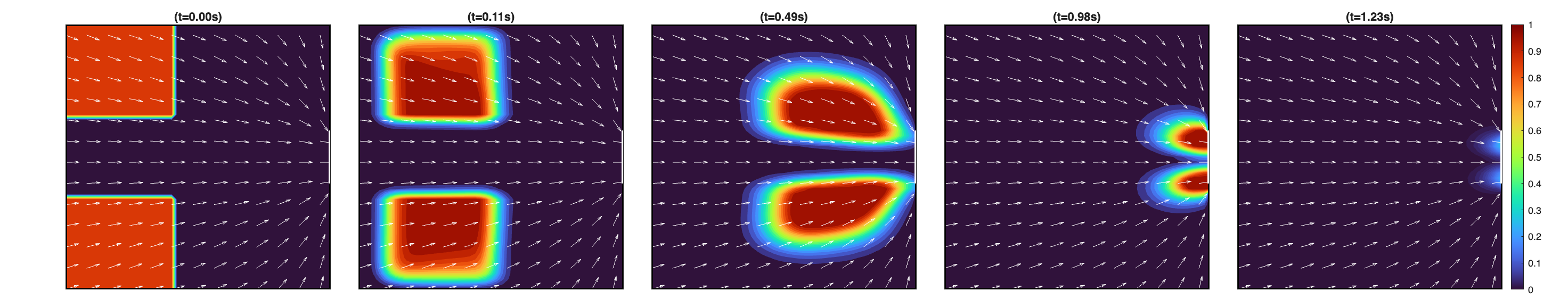}\\
	\includegraphics[width=0.8\columnwidth]{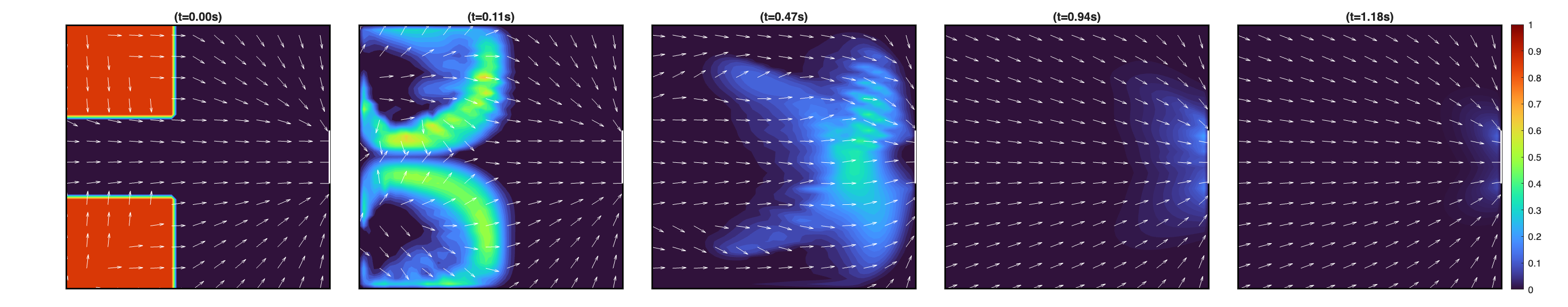}\\
	\includegraphics[width=0.8\columnwidth]{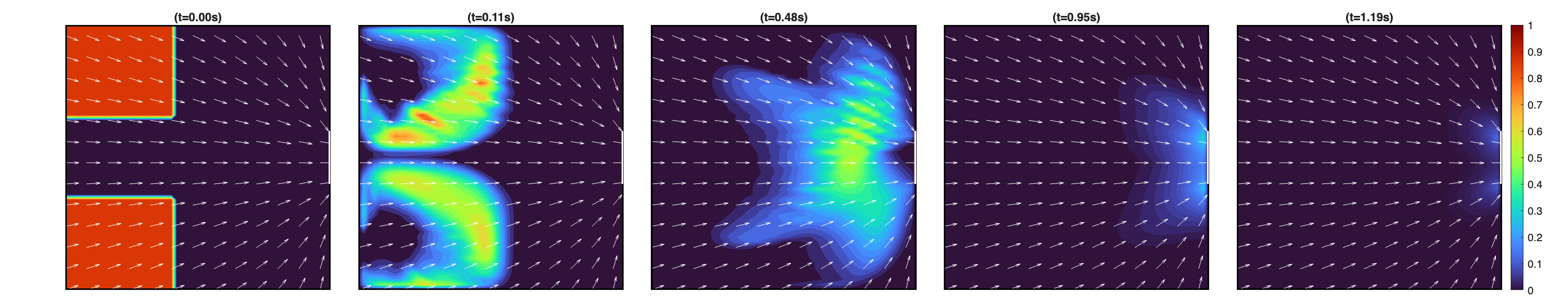}\\
\caption{Time evolution of the crowd density $\rho$ computed at different instants. 
\textbf{First row:} The constant velocity  model governed by \eqref{evolgran0}. 
\textbf{Second row:} The hard congestion Hughes' model governed by \eqref{evolution223}.
\textbf{Third row:} The soft congestion Hughes' Model governed by \eqref{eq:regularized-model}.}
\label{fig:comparison_models}
	\label{fig:eg1}
\end{figure}
 \begin{figure}[H]
	\centering
	\includegraphics[width=0.8\columnwidth]{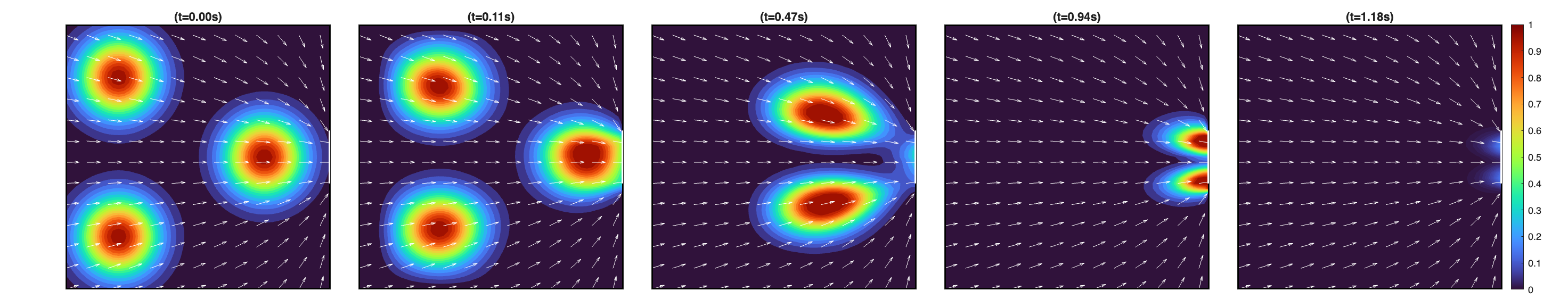}\\
	\includegraphics[width=0.8\columnwidth]{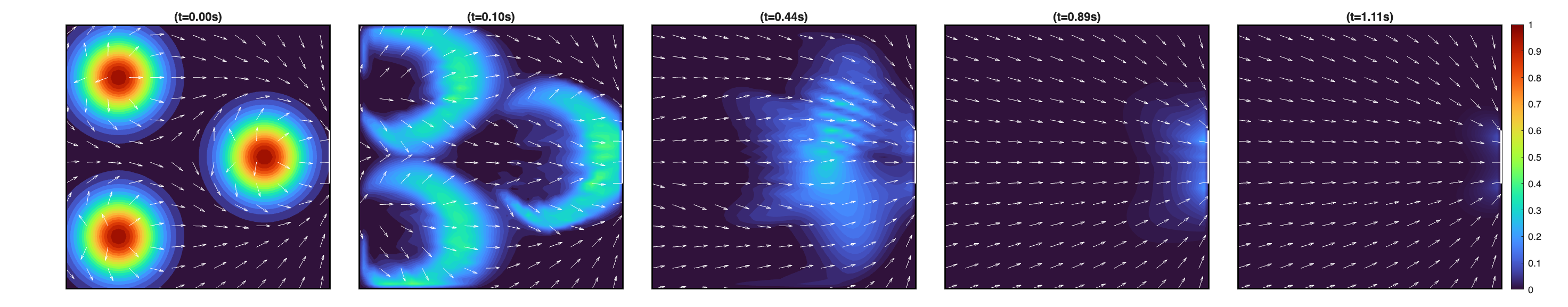}\\
	\includegraphics[width=0.8\columnwidth]{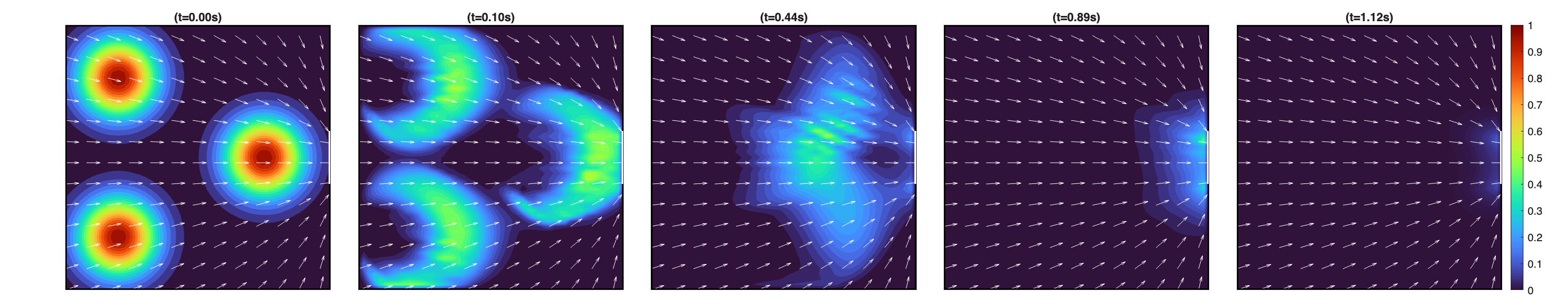}\\
	\caption{Time evolution of the crowd density $\rho$ computed at different instants. 
\textbf{First row:} The constant velocity  model governed by \eqref{evolgran0}. 
\textbf{Second row:} The hard congestion Hughes' model governed by \eqref{evolution223}.
\textbf{Third row:} The soft congestion Hughes' Model governed by \eqref{eq:regularized-model}.}
		\label{fig:eg1_gaussians}
\end{figure}
We observe that in the first row of both \cref{fig:eg1,fig:eg1_gaussians}, the model \eqref{evolgran0} transports the population in a rigid way. Pedestrians remain highly concentrated and move straight towards the exit without exploiting the available empty space in the room.

The second and third rows illustrate the dynamics of the proposed models \eqref{evolution223} and \eqref{eq:regularized-model}. The crowd diffuses into the low-density zones to avoid congestion. For instance, we observe in \cref{fig:eg1,fig:eg1_gaussians} that the initial density is smoothly deformed and spreads, significantly reducing the maximum local density early in the evacuation process. Furthermore, we observe that the soft congestion model \eqref{eq:regularized-model} yields density profiles that are virtually identical to those of the hard congestion framework \eqref{evolution223}. This confirms that our mathematical regularization provides a highly accurate approximation of the singular dynamics.

\subsubsection{Evacuation of a room with two exits.} 
In this second test, we examine a more complex configuration, in which the crowd is evacuated through two distinct exits located at the bottom and top boundaries of the domain. The exit set $\Gamma_D = D_1 \cup D_2$ is defined by
\begin{equation}
    D_1 = [0.4, 0.6] \times \{0\} \quad \text{and} \quad D_2 = [0.6, 0.75] \times \{1\}.
\end{equation}
The maximal evacuation time is maintained at $T=2$.
Similar to the previous test, we consider two initial density profiles to evaluate the impact of the geometry on both compact and smooth distributions.
\begin{itemize}
    \item \textbf{Scenario 1: Circular Cluster.}
    The initial density is concentrated in a single circular group located slightly to the left of the domain center. It is defined by the scaled characteristic function
    \begin{equation}
        \rho_{0}(x,y) = 0.9 \cdot \chi_{S}(x,y),
    \end{equation}
    where $S = B(\mathbf{c}, r)$ is the Euclidean ball centered at $\mathbf{c}=(0.3, 0.5)$ with radius $r=0.25$.

    \item \textbf{Scenario 2: Gaussian Mixture.}
    We use the same smooth distribution as in the single-exit case given by \eqref{eq:rho0_scenario2}. This configuration tests the models' ability to partition the crowd effectively between the two available exits.
\end{itemize}
The sensitivity parameter $\lambda$ remains unchanged from the previous simulation.
   \begin{figure}[H]
	\centering
	\includegraphics[width=0.8\columnwidth]{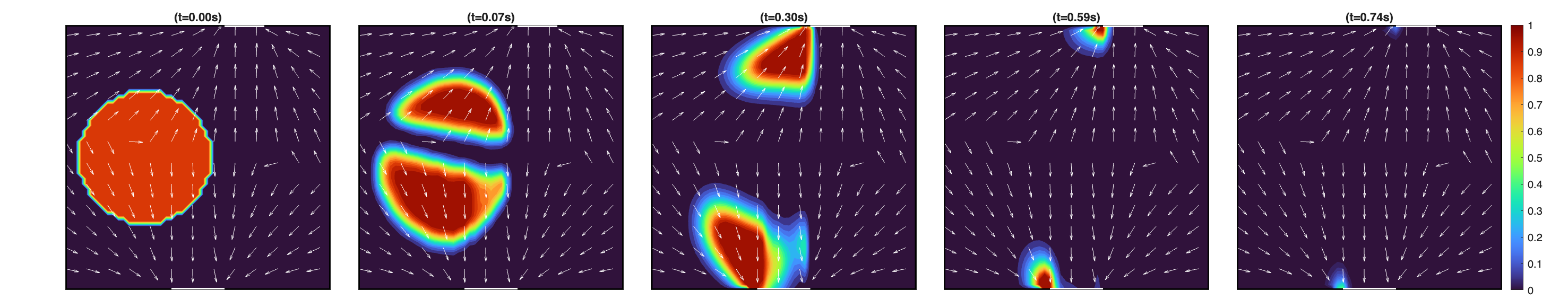}\\
	\includegraphics[width=0.8\columnwidth]{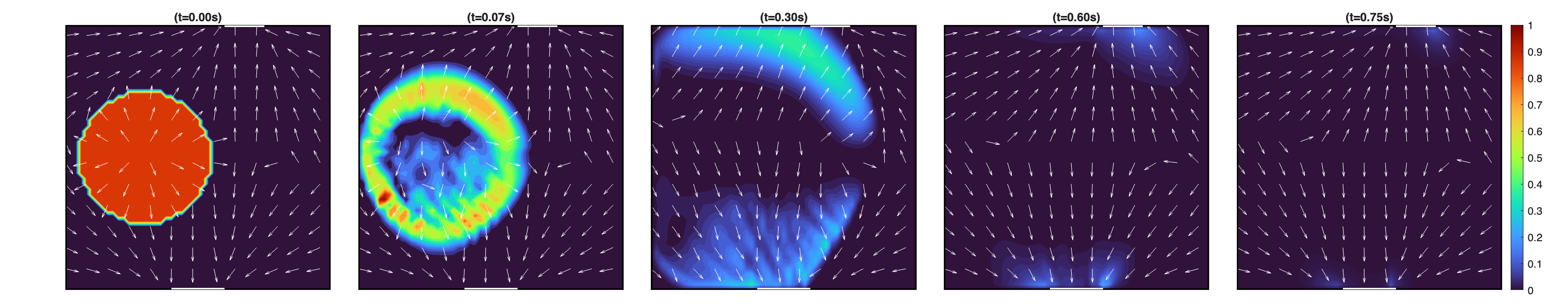}\\
		\includegraphics[width=0.8\columnwidth]{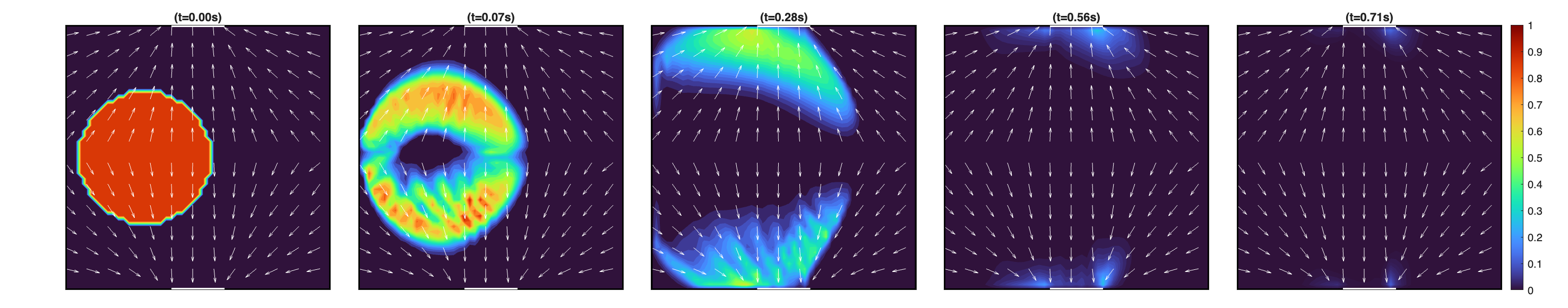}\\

\caption{Time evolution of the crowd density $\rho$ computed at different instants. 
\textbf{First row:} The constant velocity  model governed by \eqref{evolgran0}. 
\textbf{Second row:} The hard congestion Hughes' model governed by \eqref{evolution223}.
\textbf{Third row:} The soft congestion Hughes' Model governed by \eqref{eq:regularized-model}.}
\label{fig:eg2}
\end{figure}
 \begin{figure}[H]
	\centering
	\includegraphics[width=0.8\columnwidth]{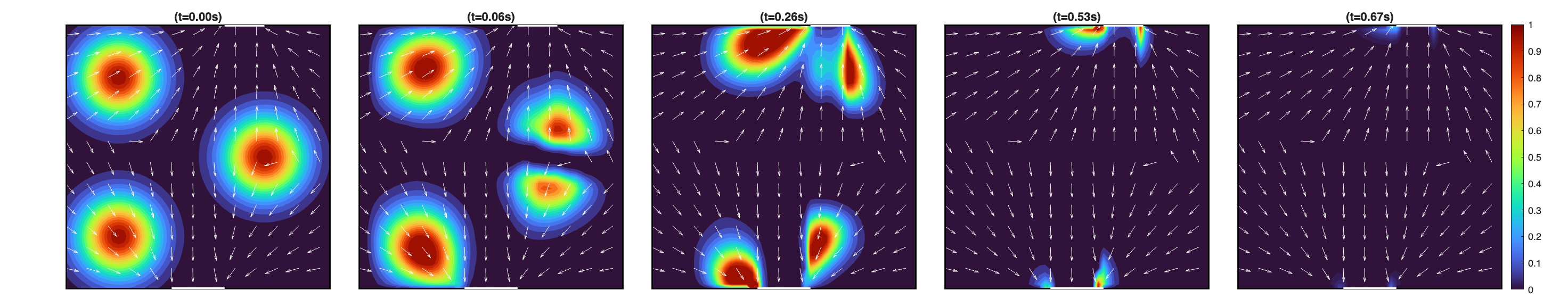}\\
	\includegraphics[width=0.8\columnwidth]{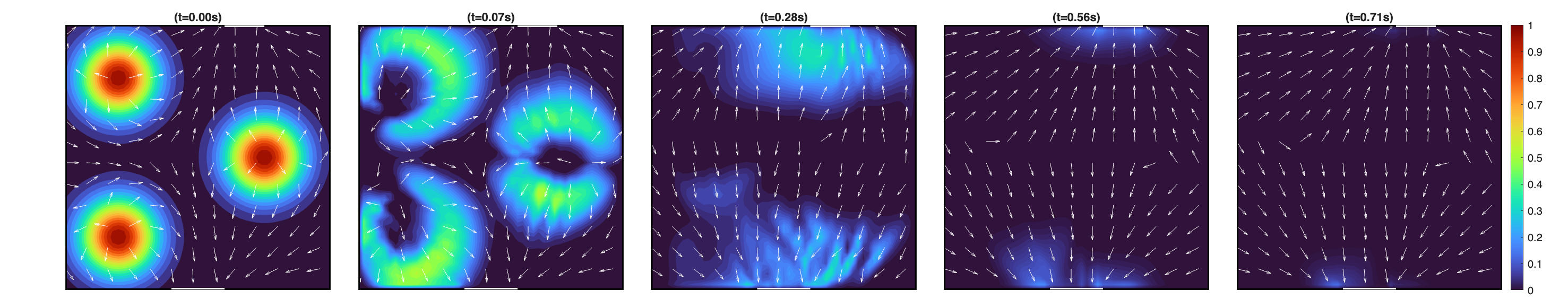}\\
		\includegraphics[width=0.8\columnwidth]{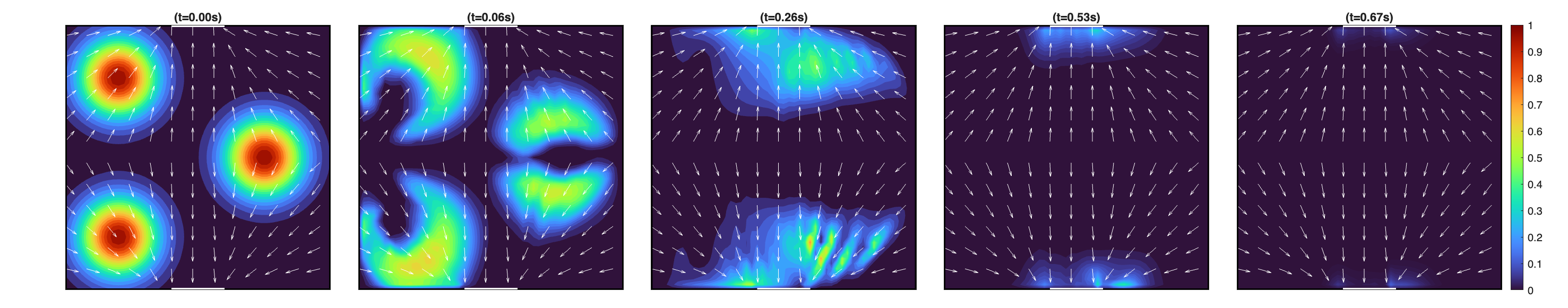}\\
\caption{Time evolution of the crowd density $\rho$ computed at different instants. 
\textbf{First row:} The constant velocity  model governed by \eqref{evolgran0}. 
\textbf{Second row:} The hard congestion Hughes' model governed by \eqref{evolution223}.
\textbf{Third row:} The soft congestion Hughes' Model governed by \eqref{eq:regularized-model}.}
\label{fig:eg2_gaussians}
\end{figure}
As observed in the first row of \cref{fig:eg2,fig:eg2_gaussians}, the constant velocity model \eqref{evolgran0} simply splits the population into two rigid groups moving directly towards the nearest exit. Since the velocity field does not take into account the local pressure, the created clusters remain highly dense and compact throughout the evacuation process.

In contrast, the second and third rows highlight the interplay between path optimization and congestion avoidance in models \eqref{eq:regularized-model} and \eqref{evolution223}. We observe that the crowd splits to utilize both exits while exhibiting a diffusive behavior. For instance, in \cref{fig:eg2}, the initially circular density is hollowed out from the center, expanding into a wide crescent-like front that lowers the maximal density. Similarly, in \cref{fig:eg2_gaussians}, the three initial groups expand and smoothly merge before bifurcating towards the exits. Once again, the evolution of the soft congestion model \eqref{eq:regularized-model} is visually indistinguishable from the singular hard congestion framework \eqref{evolution223}, further validating our theoretical approximation. 

 \subsubsection{Evacuation of a room with multiple exits.} 
In this final test, we investigate the evacuation dynamics in a room equipped with three exit doors distributed along the left and right boundaries. The exit set $\Gamma_D = D_1 \cup D_2 \cup D_3$ is defined by
\begin{equation}
    D_1 = \{0\} \times [0.15, 0.2], \quad D_2 = \{0\} \times [0.6, 0.8], \quad \text{and} \quad D_3 = \{1\} \times [0.4, 0.55].
\end{equation}
The maximal evacuation time is set to $T=2$.
We introduce two specific initial configurations to test the models against symmetric and periodic structures.
\begin{itemize}
    \item \textbf{Scenario 1: The Annulus.}
    The crowd is initially distributed in a ring shape centered at the middle of the room. The density is defined by
    \begin{equation}\label{eq:rho_annulus}
        \rho_{0}(x,y) = 0.9 \cdot \chi_{\mathcal{C}}(x,y),
    \end{equation}
    where the support $\mathcal{C}$ is an annulus centered at $\mathbf{c}=(0.5, 0.5)$ with inner radius $r_{\text{in}} = 0.15$ and outer radius $r_{\text{out}} = 0.35$:
    \[
        \mathcal{C} = \left\{ (x,y) \in \Omega : r_{\text{in}} < \Vert(x,y) - \mathbf{c}\Vert_2 < r_{\text{out}}\right\}.
    \]

    \item \textbf{Scenario 2: The Checkerboard.}
    The initial density follows a periodic pattern of alternating empty and occupied squares of size $1/8 \times 1/8$. It is given by
    \begin{equation}
        \rho_0(x,y) = 0.9 \cdot \chi_{\mathcal{C}}(x,y),
    \end{equation}
    where the domain $\mathcal{C}$ is defined by
    \[
        \mathcal{C} = \left\{ (x,y) \in \Omega : \sin(8\pi x)\sin(8\pi y) > 0 \right\}.
    \]
\end{itemize}
   \begin{figure}[H]
	\centering
	\includegraphics[width=0.8\columnwidth]{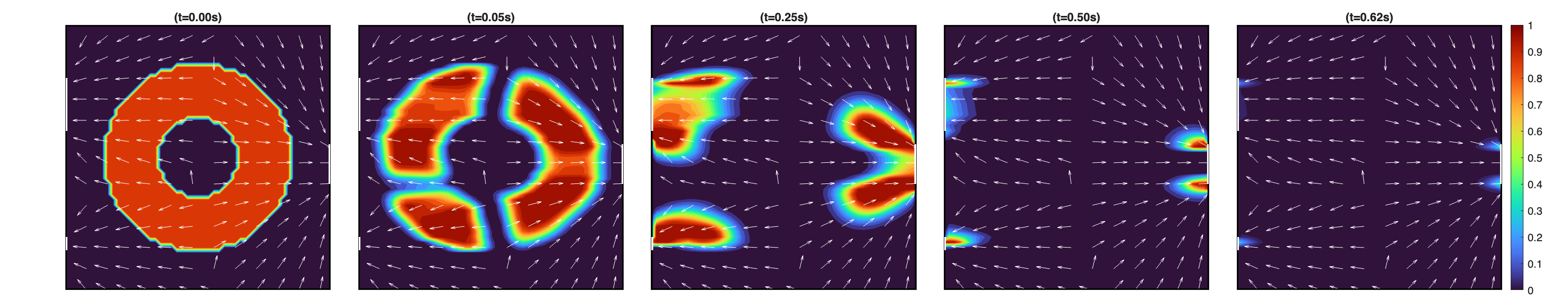}\\
	\includegraphics[width=0.8\columnwidth]{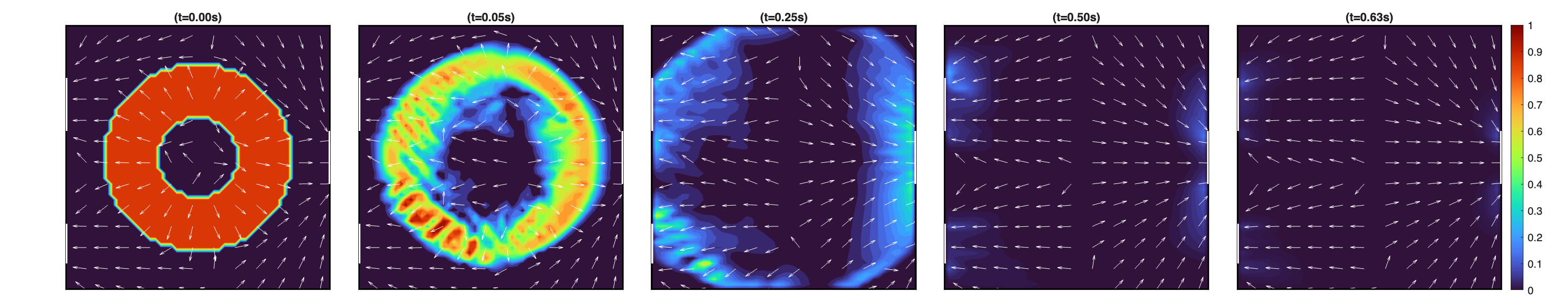}\\
		\includegraphics[width=0.8\columnwidth]{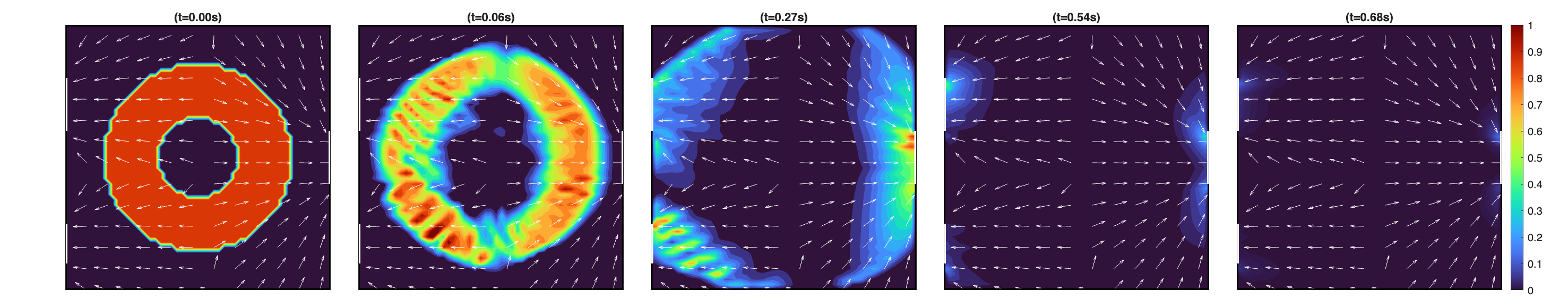}
\caption{Time evolution of the crowd density $\rho$ computed at different instants. 
\textbf{First row:} The constant velocity  model governed by \eqref{evolgran0}. 
\textbf{Second row:} The hard congestion Hughes' model governed by \eqref{evolution223}.
\textbf{Third row:} The soft congestion Hughes' Model governed by \eqref{eq:regularized-model}.}
\label{fig:eg3_ring}
\end{figure}
   \begin{figure}[H]
	\centering
	\includegraphics[width=0.8\columnwidth]{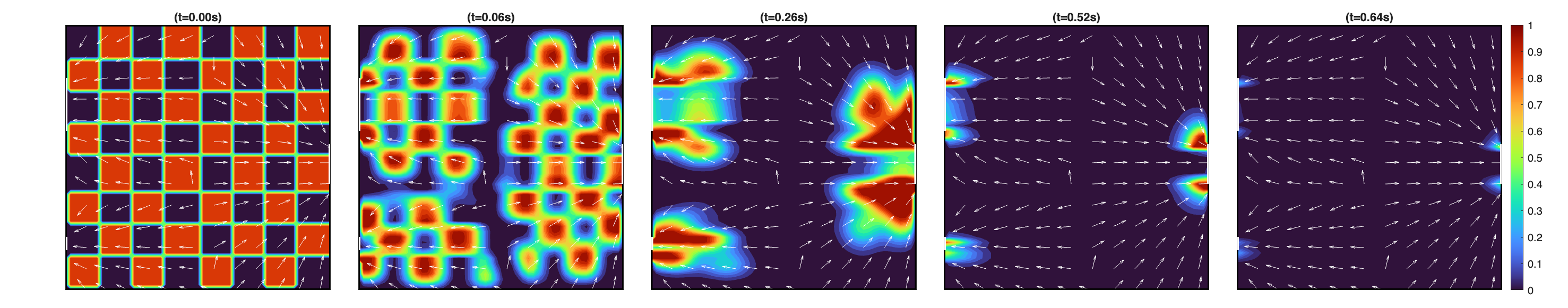}\\
	\includegraphics[width=0.8\columnwidth]{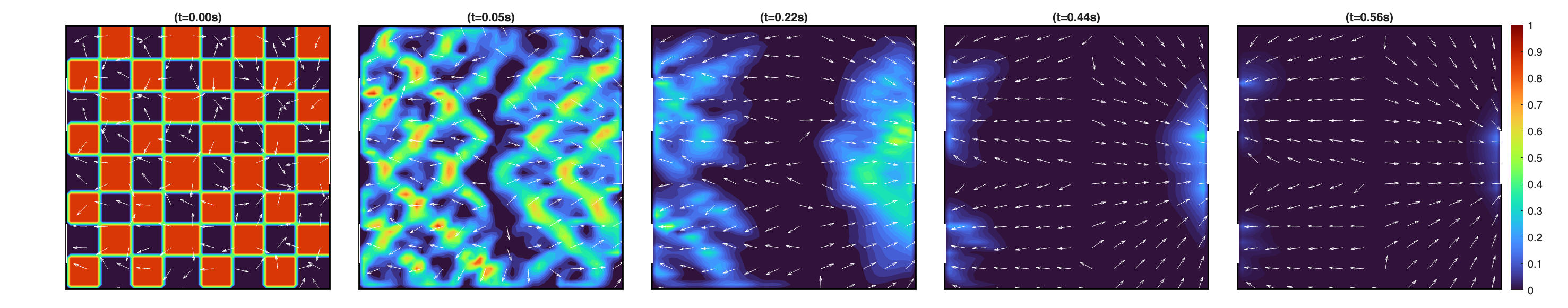}\\
		\includegraphics[width=0.8\columnwidth]{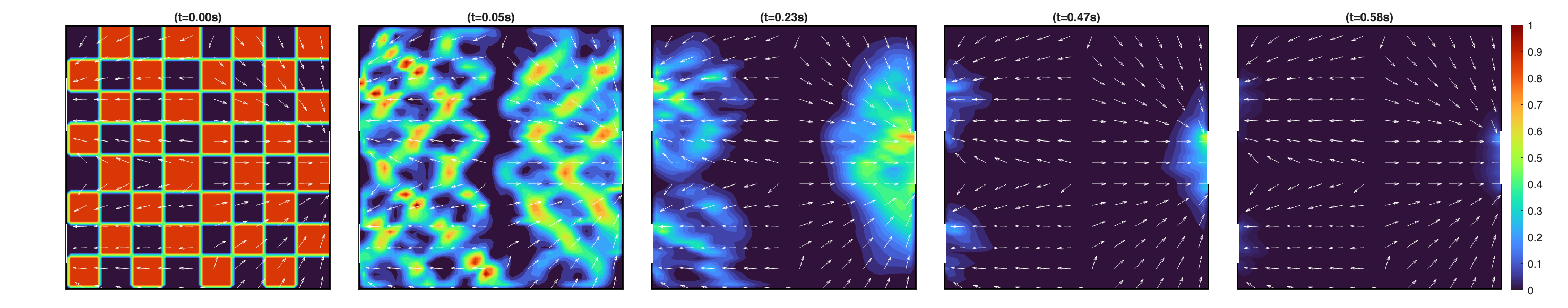}\\
\caption{Time evolution of the crowd density $\rho$ computed at different instants. 
\textbf{First row:} The constant velocity  model governed by \eqref{evolgran0}. 
\textbf{Second row:} The hard congestion Hughes' model governed by \eqref{evolution223}.
\textbf{Third row:} The soft congestion Hughes' Model governed by \eqref{eq:regularized-model}.}
\label{fig:eg3_checkerboard}
\end{figure}

The first row of both \cref{fig:eg3_checkerboard,fig:eg3_ring} shows again that the constant velocity model \eqref{evolgran0} moves the initial shapes in a completely rigid manner. Since pedestrians do not anticipate congestion, the created groups simply slide towards the nearest exits. This lack of diffusion creates high-density clusters that overlap near the boundaries.

In contrast, the second and third rows demonstrate the robust, fluid-like behavior of the models \eqref{eq:regularized-model} and \eqref{evolution223}. In particular, we see how they handle the complex initial geometries: the dense ring smoothly expands and divides, while the disjointed checkerboard squares rapidly diffuse and merge into a cohesive flow. This pressure-driven expansion efficiently dissipates the initial congestion, allowing the crowd to optimally organize and bifurcate towards the three available exits. Once again, the soft congestion model \eqref{eq:regularized-model} visually replicates the dynamics of the hard congestion model \eqref{evolution223}. 

\begin{table}[!ht]
    \centering
    \caption{Evacuation times (in seconds) for different mathematical models and initial density configurations.}
    \label{tab:models_evacuation}
    \begin{tabular}{|l|ccc|}
        \hline
        \textbf{Initial Density} & 
        \textbf{\eqref{evolgran0}} & 
        \textbf{\eqref{evolution223} } & 
        \textbf{\eqref{eq:regularized-model}} \\
        \hline
        \hline
        
        Two Blocks & 
        \texttt{1.23s} & 
        \texttt{1.176s} & 
        \texttt{1.194s} \\
        \hline
        
        Gaussians & 
        \texttt{1.176s} & 
        \texttt{1.110s} & 
        \texttt{1.116s} \\
        \hline

        Disc & 
        \texttt{0.744s} & 
        \texttt{0.75s} & 
        \texttt{0.696s} \\
        \hline

        Annulus & 
        \texttt{0.624s} & 
        \texttt{0.630s} & 
        \texttt{0.678s} \\
        \hline

        Checkerboard & 
        \texttt{0.642s} & 
        \texttt{0.558s} & 
        \texttt{0.588s} \\
        \hline
    \end{tabular}

\end{table}
\subsubsection{Impact of the velocity field } \label{subsection:impact_H}
In what follows, we provide several examples to demonstrate the effect of the velocity field choice in the soft congestion model \eqref{eq:regularized-model} and its hard congestion counterpart \eqref{evolution223}. Specifically, we analyze the sensitivity of the dynamics to the congestion parameter $\lambda$ appearing in the exponential cost function:
\begin{equation}
    \he(\rho) = \exp(\lambda\rho), \quad \text{with } \lambda \in \{1.5, \, 3.75, \, 7.25\}.
\end{equation}
Higher values of $\lambda$ correspond to a stronger repulsion effect from high-density regions, leading to more pronounced detour behaviors. To illustrate this, we consider two specific geometrical configurations.

\begin{itemize}
    \item \textbf{Example 1: The C-Shape.}
    We consider a single-exit scenario in which the initial density is distributed along a C shape, forcing the agents to circumvent the empty center. The exit is located at $\Gamma_D = \{1\} \times [0.4, 0.6]$. The initial density is defined by
    \begin{equation}
        \rho_0(x,y) = 0.95 \cdot \chi_{\mathcal{C}}(x,y),
    \end{equation}
    where the support $\mathcal{C}$ is the intersection of an annulus and a domain excluding the right-side opening:
    \[
        \mathcal{C} = \left\{ (x,y)\in \Omega : 0.2 < \Vert(x,y)-\mathbf{c}\Vert_2 < 0.4 \right\} \cap \left( \{x < 0.5\} \cup \{\vert y-0.5\vert > 0.1\} \right),
    \]
    with $\mathbf{c} = (0.5, 0.5)$.

    \item \textbf{Example 2: The Cross-Shape.}
    In this multipoint evacuation test, the room is equipped with four exits situated on the vertical boundaries
    \[
        D_1 = \{0\}\times [0.1, 0.25], \quad D_2 = \{0\}\times [0.6, 0.8],
    \]
    \[
        D_3 = \{1\}\times [0.2, 0.4], \quad D_4 = \{1\}\times [0.78, 0.9].
    \]
    The initial density forms a cross centered in the domain:
    \begin{equation}
        \rho_0(x,y) = 0.95 \cdot \chi_{\mathcal{C}}(x,y),
    \end{equation}
    where $\mathcal{C}$ is the union of a horizontal and a vertical strip of width $0.2$,
    \[
        \mathcal{C} = \left\{ \vert x-0.5\vert < 0.1 \right\} \cup \left\{ \vert y-0.5\vert < 0.1 \right\}.
    \]
\end{itemize}
 \begin{figure}[H]
	\centering
	\includegraphics[width=0.8\columnwidth]{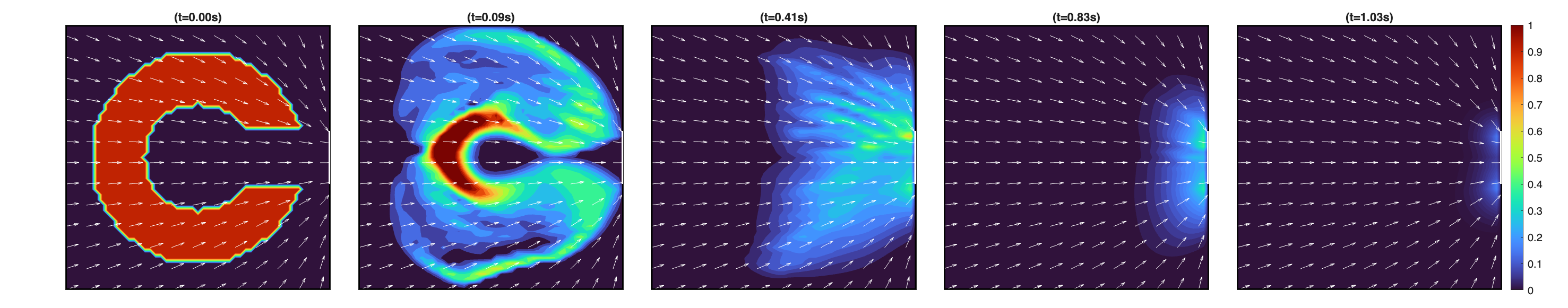}\\
	\includegraphics[width=0.8\columnwidth]{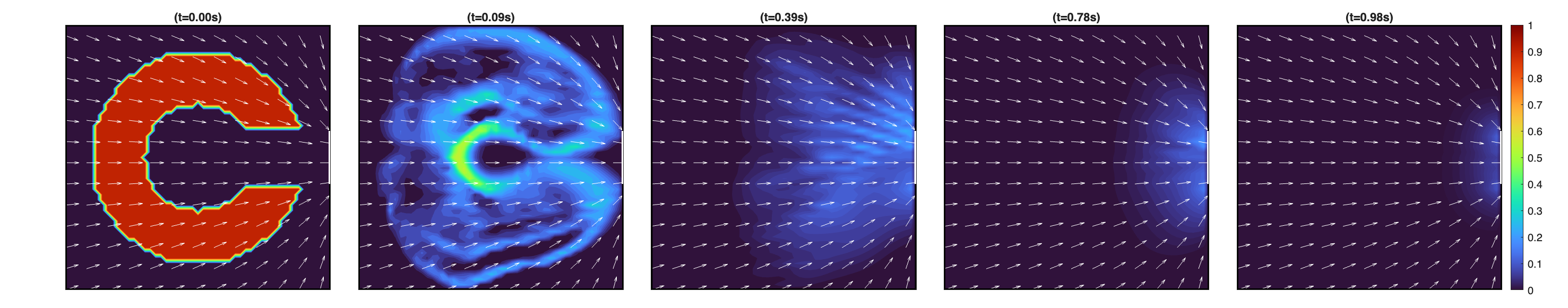}\\
	\includegraphics[width=0.8\columnwidth]{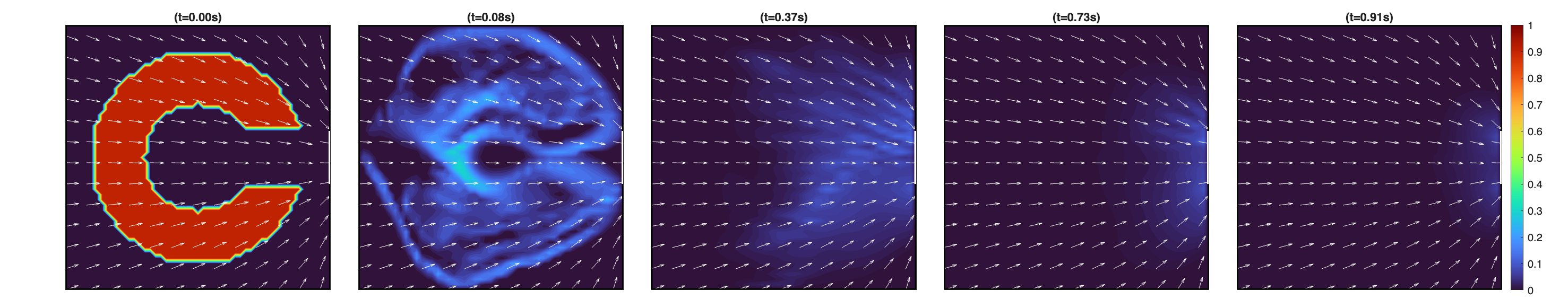}
\caption{The crowd density $\rho$ computed by the model \eqref{eq:regularized-model} at different time steps. Top row: $\lambda = 1.5$. Middle row: $\lambda = 3.75$. Bottom row: $\lambda = 7.25$.}
	\label{fig:eg1-reg-lambda}
\end{figure}
 \begin{figure}[h!]
	\centering
	\includegraphics[width=0.8\columnwidth]{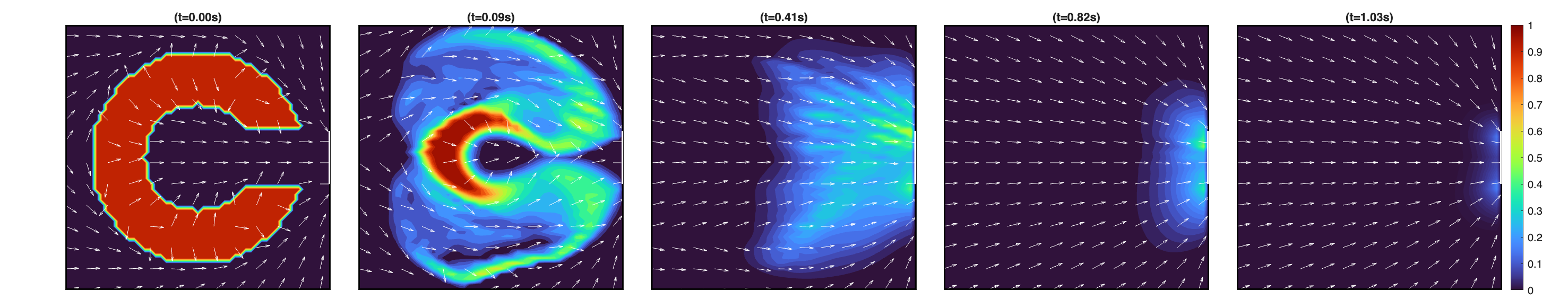}\\
	\includegraphics[width=0.8\columnwidth]{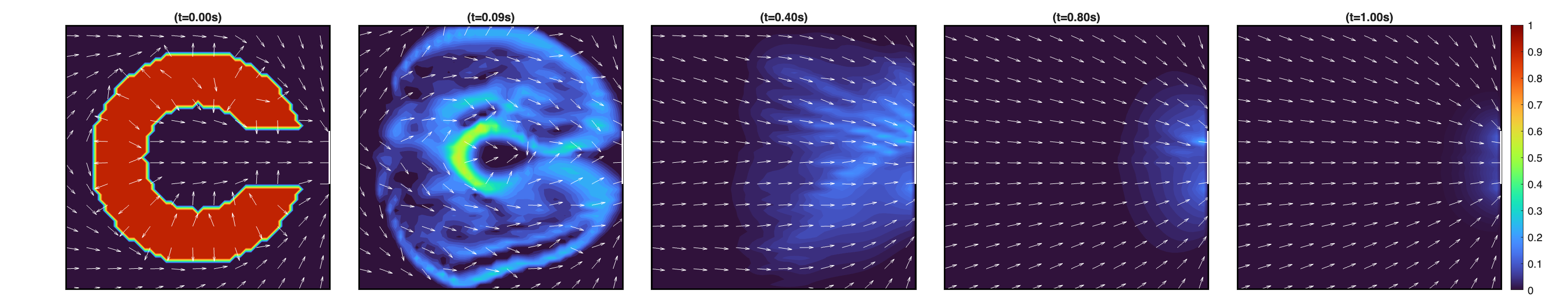}\\
	\includegraphics[width=0.8\columnwidth]{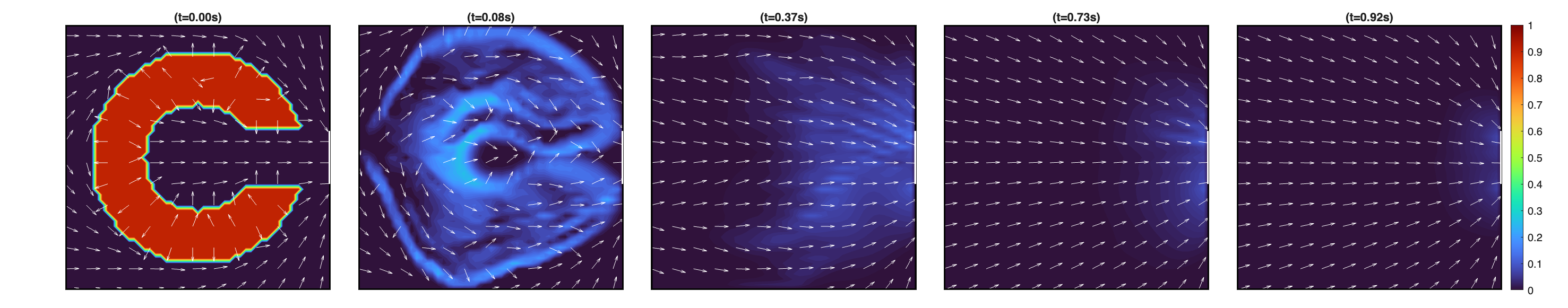}
\caption{The crowd density $\rho$ computed by the model \eqref{evolution223} at different time steps. Top row: $\lambda = 1.5$. Middle row: $\lambda = 3.75$. Bottom row: $\lambda = 7.25$.}
	\label{fig:eg1-lambda}
\end{figure}

 \begin{figure}[H]
	\centering
	\includegraphics[width=0.8\columnwidth]{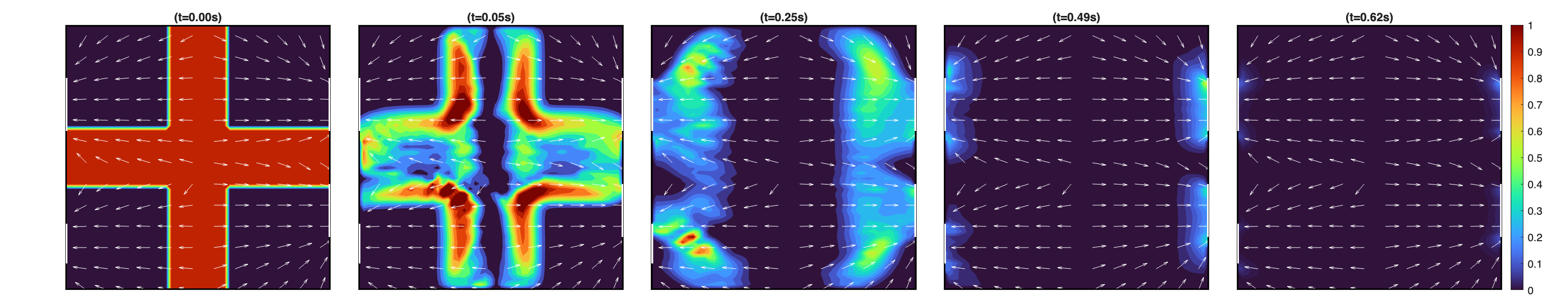}\\
	\includegraphics[width=0.8\columnwidth]{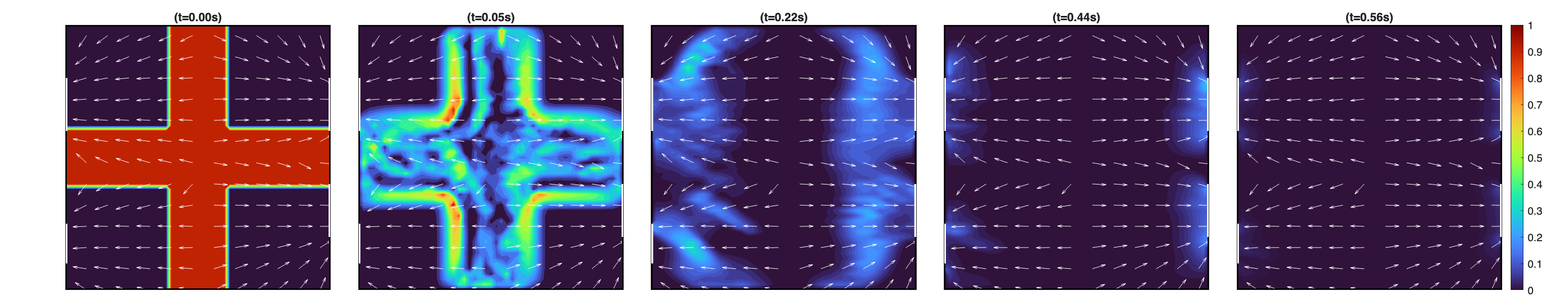}\\
	\includegraphics[width=0.8\columnwidth]{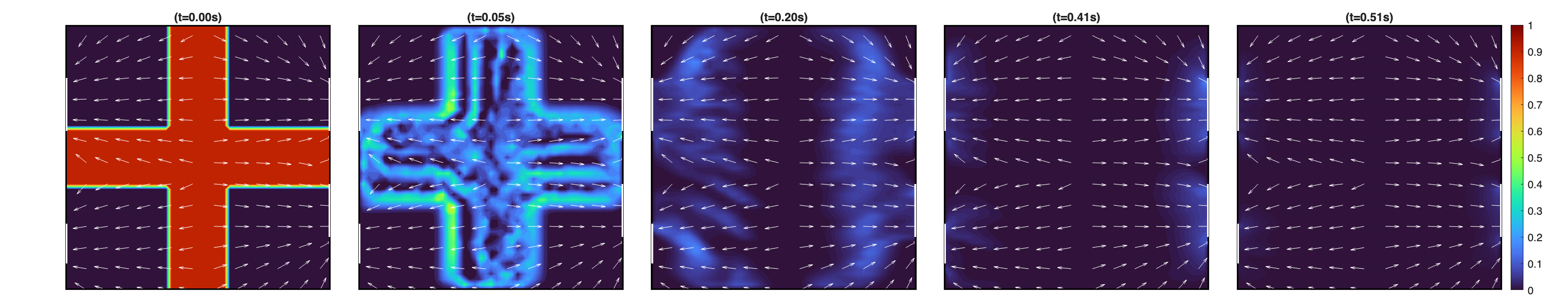}
\caption{The crowd density $\rho$ computed by the model \eqref{eq:regularized-model} at different time steps. Top row: $\lambda = 1.5$. Middle row: $\lambda = 3.75$. Bottom row: $\lambda = 7.25$.}
	\label{fig:eg2-reg-lambda}
\end{figure}
 \begin{figure}[H]
	\centering
	\includegraphics[width=0.8\columnwidth]{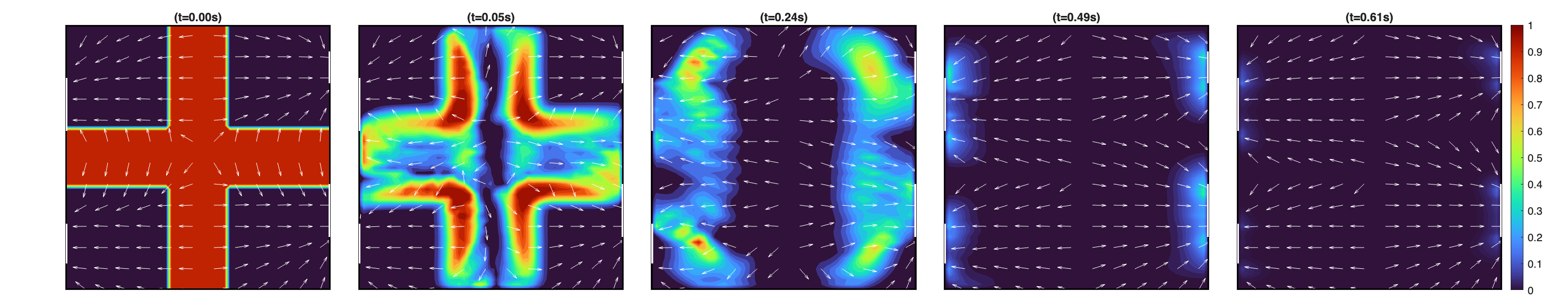}\\
	\includegraphics[width=0.8\columnwidth]{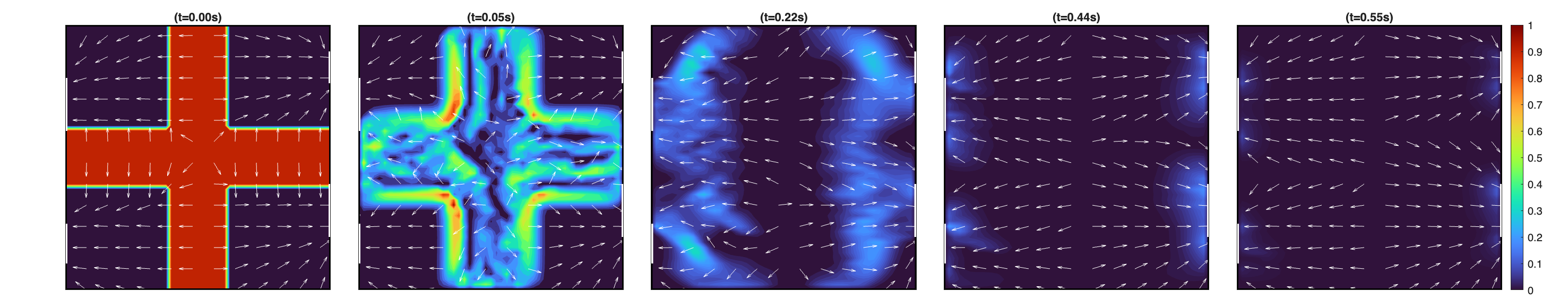}\\
	\includegraphics[width=0.8\columnwidth]{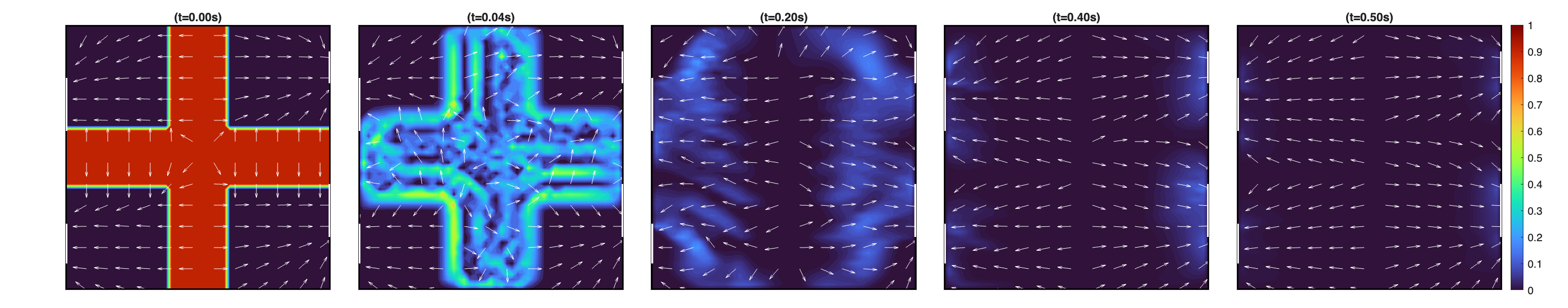}
\caption{The crowd density $\rho$ computed by the model \eqref{evolution223} at different time steps. Top row: $\lambda = 1.5$. Middle row: $\lambda = 3.75$. Bottom row: $\lambda = 7.25$.}
	\label{fig:eg2-lambda}
\end{figure}
\begin{table}[H]
    \centering
    \caption{Evacuation times for \eqref{eq:regularized-model} for different values of the congestion parameter $\lambda$.}
    \label{tab:reg_model_lambda}
    \begin{tabular}{|l|ccc|}
        \hline
        \textbf{Congestion parameter} & 
        \textbf{$\lambda = 1.5$} & 
        \textbf{$\lambda = 3.75$} & 
        \textbf{$\lambda = 7.25$}\\
        \hline
        \hline
        
        C-Shape & 
        \texttt{1.032s} & 
        \texttt{0.978s} & 
        \texttt{0.912s}\\
        \hline
        
        Cross & 
        \texttt{0.618s} & 
        \texttt{0.558s} & 
        \texttt{0.510s}\\
        \hline

    \end{tabular}
\end{table}
\begin{table}[H]
    \centering
    \caption{Evacuation times for \eqref{evolution223} for different values of the congestion parameter $\lambda$.}
    \label{tab:model_lambda}
    \begin{tabular}{|l|ccc|}
        \hline
        \textbf{Congestion parameter} & 
        \textbf{$\lambda = 1.5$} & 
        \textbf{$\lambda = 3.75$} & 
        \textbf{$\lambda = 7.25$}\\
        \hline
        \hline
        
        C-Shape & 
        \texttt{1.026s} & 
        \texttt{0.996s} & 
        \texttt{0.918s}\\
        \hline
        
        Cross & 
        \texttt{0.606s} & 
        \texttt{0.552s} & 
        \texttt{0.498s}\\
        \hline

    \end{tabular}
\end{table}
As seen in \cref{fig:eg1-lambda,fig:eg2-lambda,fig:eg1-reg-lambda,fig:eg2-reg-lambda}, the parameter $\lambda$ plays a crucial role in spreading the crowd and preventing severe congestion in saturated areas. For smaller values ($\lambda = 1.5$, top rows), the population remains relatively compact. This results in pronounced, high-density regions (highlighted in red) as pedestrians make their way toward the exit.

Conversely, as $\lambda$ increases (middle and bottom rows), spatial dispersion becomes much more noticeable. The crowd diffuses rapidly into the available empty spaces. Specifically, we can see the initial "C" shape expanding and the empty quadrants of the "Cross" pattern quickly filling up. This natural expansion actively prevents the formation of dense clusters, leading to a highly fluid and cooperative movement.

Naturally, this enhanced spatial distribution has a direct impact on the overall evacuation efficiency. As shown in \cref{tab:model_lambda,tab:reg_model_lambda}, larger values of $\lambda$ noticeably reduce the total evacuation time.
\subsection{Comparison of cost functions}\label{subsection:comp_H}
To further investigate the effect of the cost $\he$ on the crowd dynamics, we perform a comparison using the one-exit scenario. We use  both models \eqref{eq:regularized-model} and \eqref{evolution223} with three cost functions
\begin{equation}
	\he_1(\rho) = \exp(\lambda \rho),\quad \he_2(\rho) = \frac{1}{v(\rho)},\quad\mbox{and}\quad\he_{3}(\rho) = \frac{1}{f^{\delta}(\rho)},
\end{equation}
where $v(\rho) = 1-\rho$ is the classical Hughes' model, and $v^{\delta} = 1- \exp\left(-c\frac{1-\rho}{\max(\delta,\rho)}\right)$ is an exponential barrier model for a given $c>0$. We fix the sensitivity parameter at $\lambda = 1.2$,  the safety truncation factor $\delta = 10^{-3}$ and set $c=1$.

 The initial configuration is given by three blocks of densities:
\begin{equation}\label{eq:init-asym}
    \rho_0(x,y) = 0.95 \chi_{\mathcal{C}_1}(x,y) + 0.5 \chi_{\mathcal{C}_2}(x,y) + 0.75 \chi_{\mathcal{C}_3}(x,y),
\end{equation}
where $\mathcal{C}_1 = [0.1, 0.35] \times [0.6, 0.8]$, $\mathcal{C}_2 = [0.2, 0.6] \times [0.15, 0.35]$, and $\mathcal{C}_3 = [0.66, 0.85] \times [0.6, 0.8]$.
  \begin{figure}[H]
	\centering
	\includegraphics[width=0.8\columnwidth]{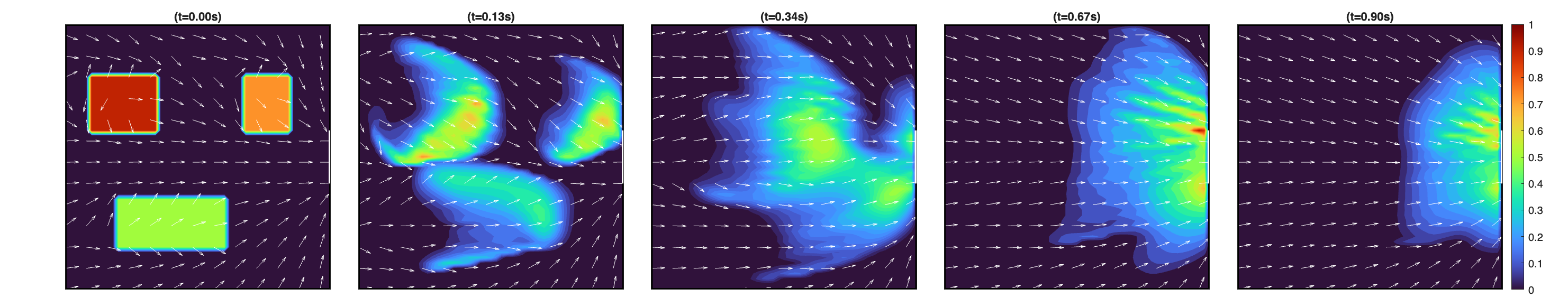}\\
	\includegraphics[width=0.8\columnwidth]{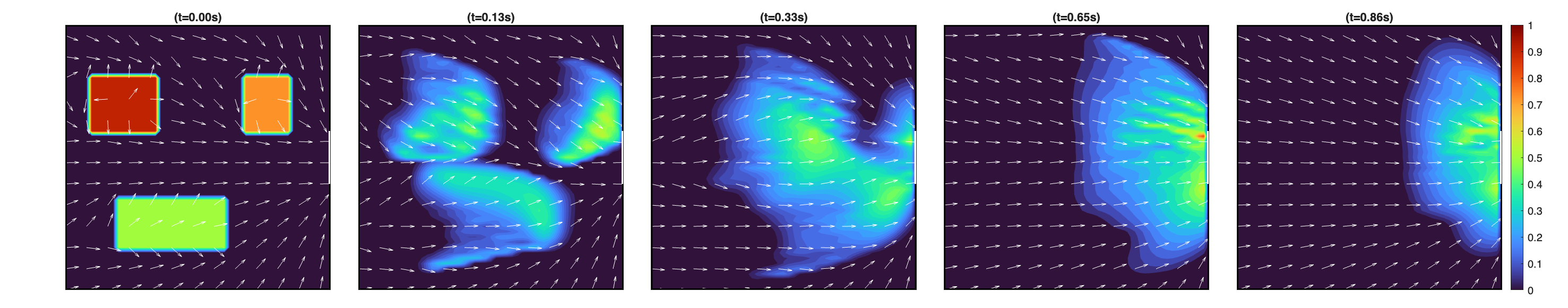}\\
	\includegraphics[width=0.8\columnwidth]{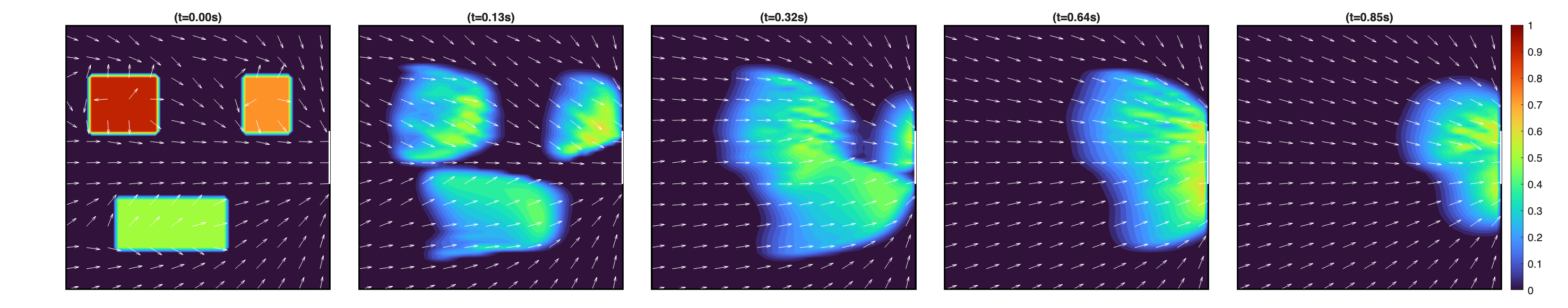}\\
\caption{The crowd density $\rho$ computed by the \eqref{eq:regularized-model} model at different time steps. Top row: $\he_1$. Middle row: $\he_2$. Bottom row: $\he_3$.}
	\label{fig:eg_H_SC}
\end{figure}
 \begin{figure}[H]
	\centering
	\includegraphics[width=0.8\columnwidth]{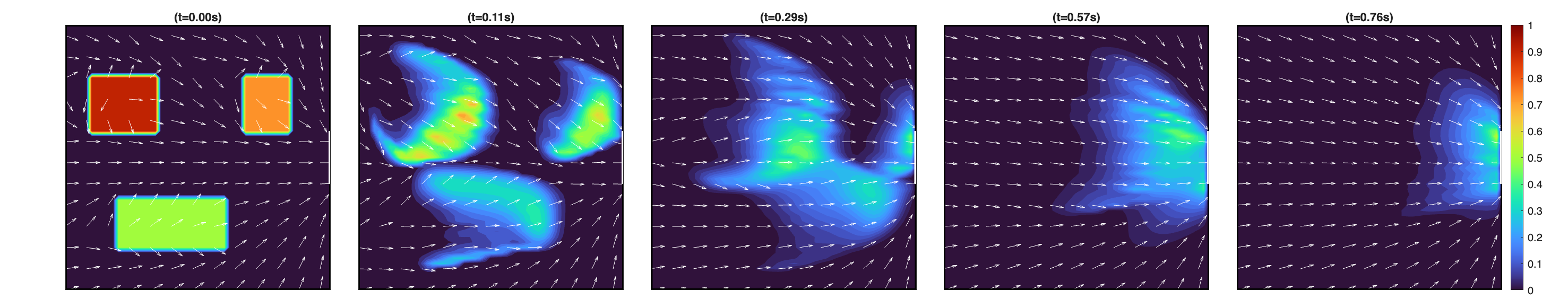}\\
	\includegraphics[width=0.8\columnwidth]{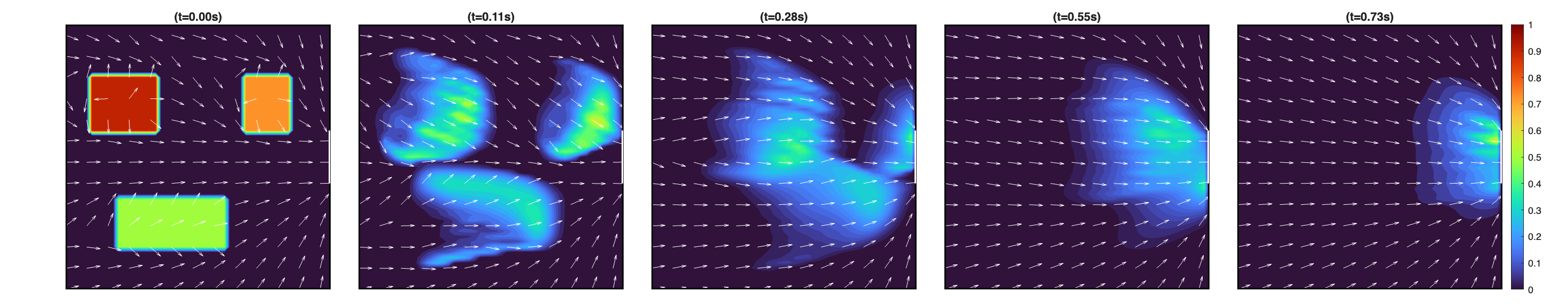}\\
	\includegraphics[width=0.8\columnwidth]{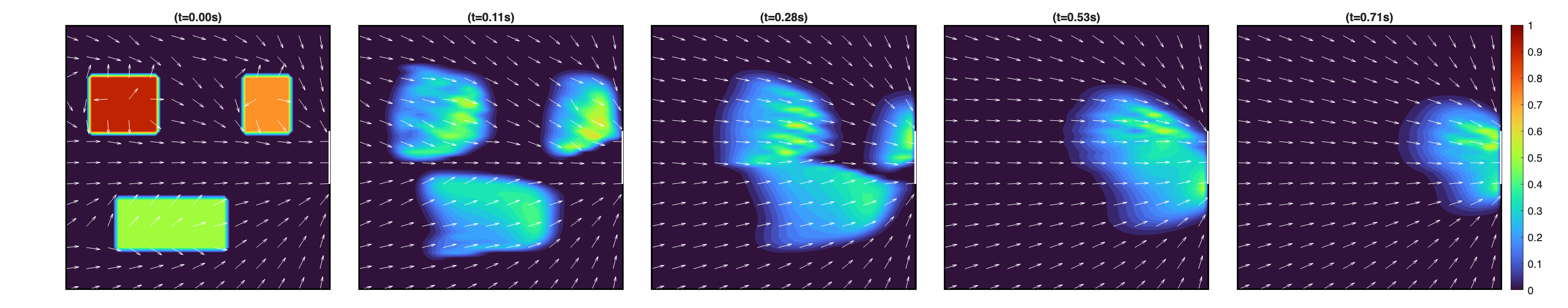}\\
\caption{The crowd density $\rho$ computed by the \eqref{evolution223} model at different time steps. Top row: $\he_1$. Middle row: $\he_2$. Bottom row: $\he_3$.}
	\label{fig:eg_H_HC}
\end{figure}
In the top rows (with cost $\he_1$) of both \cref{fig:eg_H_SC} and \cref{fig:eg_H_HC}, we observe that the deformation of the density occurs early and smoothly. Since the cost grows smoothly even at intermediate densities, pedestrians adjust their velocities to avoid dense regions. This results in a fluid merging of the three blocks. In contrast, the models using $\he_2$ and $\he_3$ display sharper densities. Since these costs remain relatively flat at low and medium densities and only blow up as $\rho\to 1$, pedestrians maintain straight paths and only start detouring once they collide. 
In \cref{fig:eg_H_SC} we see that densities are more diffuse than in \cref{fig:eg_H_HC}. The penalty function $\beta(p)$ seems to slow down pedestrians but allows a continuous wave-like compression of the crowd.

 \begin{figure}[H]
	\centering
	\includegraphics[width=0.6\columnwidth]{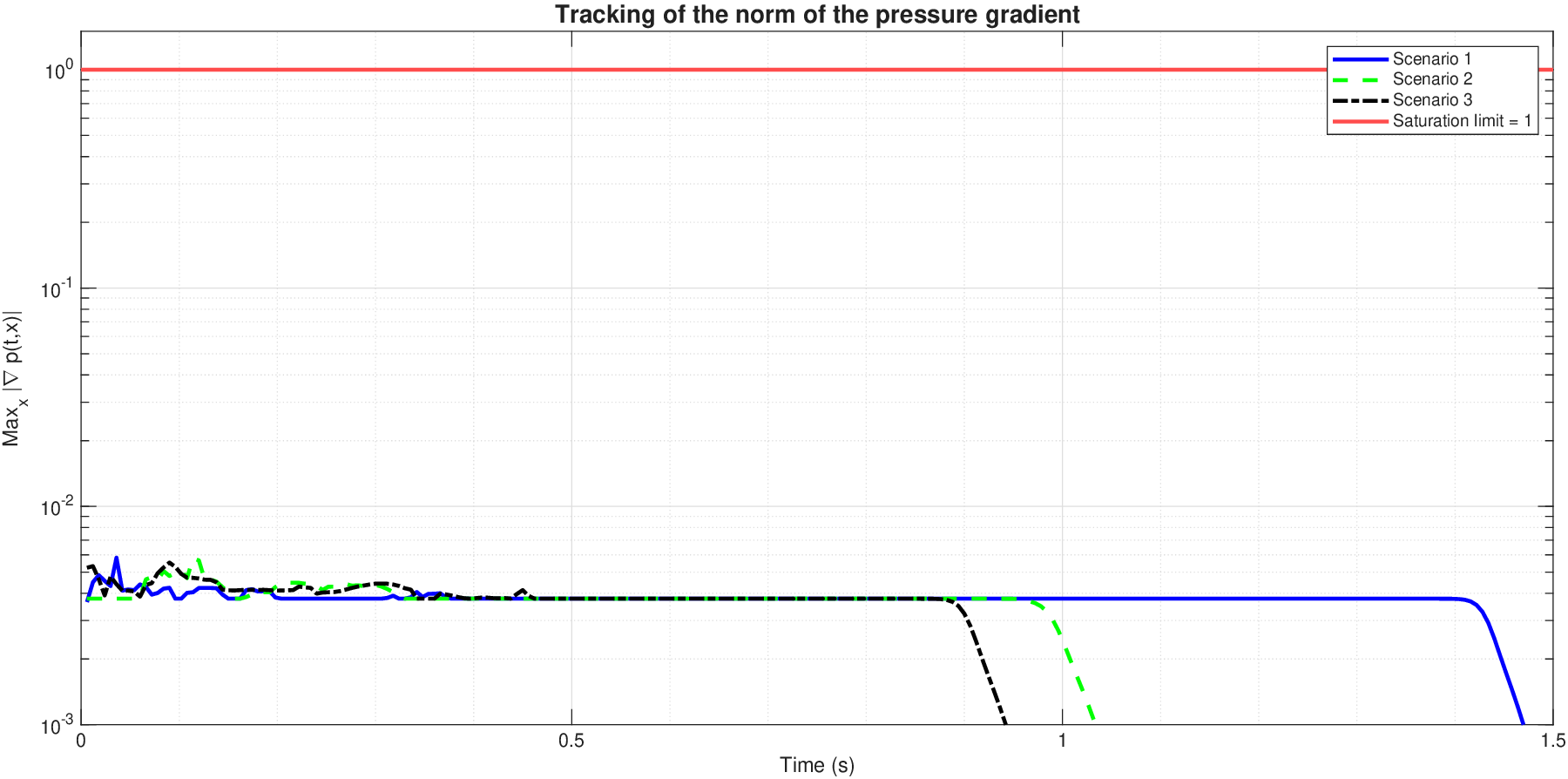}\\
\caption{Evolution of the maximum pressure gradient norm. The curves in blue, green, and black track the maximum spatial norm of the pressure $p$, plotted on a logarithmic scale. The red line represents the theoretical saturation limit, \ie $\vert \nabla p\vert =1$.}
	\label{fig:eg_pressure}
\end{figure}
In Scenario 1, the density is given by \eqref{eq:init-asym}, in Scenario 2 by \eqref{eq:rho0_scenario2}, while in Scenario 3 it is given by \eqref{eq:rho_annulus}. In all these examples, we make use of the classical Hughes' cost $\he_2(\rho) = \frac{1}{v(\rho)}$ with $v(\rho) = 1-\rho$. These numerical observations confirm what we stressed in \cref{sec:model}. The corrective flux $\Phi$ is never activated, confirming that the dynamic is driven by the standard  Hughes' anticipation cost.

In some sense, since the gradient $\nabla p$ may be interpreted as a measure of the interaction forces exerted between agents, the deviation mechanism induced by the Hughes-type strategy \eqref{VH1}-\eqref{VH2} naturally regulates these interactions and prevents such forces from reaching the critical threshold 
\[
|\nabla p|=1.
\]
 
As a consequence, at least from a numerical perspective, solving the Hughes' model supplemented with the corrective flux $\Phi$ appears to be essentially equivalent to solving the Hughes' model itself, since the corrective mechanism remains inactive whenever the congestion constraint is not saturated. We believe that the same phenomenon should persist when replacing \eqref{VH1} by the classical Hughes' choice 
\[
U[\rho]=-\rho v(\rho)^2\nabla\de.
\]
A rigorous proof of this property would provide a new route towards the well-posedness theory of the classical Hughes' model, a notoriously difficult problem that has remained largely open for more than two decades. 

\appendix
\section{Technical results}\label{app:tr}
This section is devoted to several technical lemmas and auxiliary results that were used throughout the paper.
\begin{lemma}\label{lem:app-lem-1}
The operator $\T: W^{1,s}_{D}(\Omega)\to \left(W^{1,s}_{D}(\Omega)\right)^{*}$ defined by
\begin{equation}\label{eq:op-T}
    \langle\T (u),\xi\rangle = \int_{\Omega}\beta(u)\xi \dd x + \int_{\Omega} \beta(u)\nabla\de\cdot\nabla \xi \dd x, \quad \mbox{for all } \xi\in W^{1,s}_{D}(\Omega),
\end{equation}
is pseudo-monotone\footnote{See, \textit{e.g.}, \cite[Definition 2.1]{LionsLivre}}.
\end{lemma}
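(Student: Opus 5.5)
The plan is to verify the Brezis/Lions definition of pseudo-monotonicity directly. We must show three things: $\T$ is well defined and bounded; for $u_n\rightharpoonup u$ weakly in $W^{1,s}_D(\Omega)$ with $\limsup_n\langle\T(u_n),u_n-u\rangle\le 0$, we have $\liminf_n\langle\T(u_n),u_n-\xi\rangle\ge\langle\T(u),u-\xi\rangle$ for every $\xi$. The whole argument rests on the fact that $\T$ contains no term in $\nabla u$ other than through the test function. So $\T$ is a lower-order (compact) perturbation, and pseudo-monotonicity should follow from compactness alone.

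\textbf{Step 1 (well-definedness and boundedness).} By \cref{EnnajiaugmentedLag}, $\de\in W^{1,\infty}$, so $\vert\nabla\de\vert\le C$. We take $s>d$, as is implicit in the paper's use of $\K\subset W^{1,s}_D$. Then $W^{1,s}_D(\Omega)\hookrightarrow C(\overline\Omega)$, so bounded sets of $u$ take values in a compact interval $[-M,M]$. Since $\beta$ is Lipschitz on compacts with $\beta(0)=0$, we get $\vert\beta(u)\vert\le L_M\vert u\vert$. Hölder's inequality then gives
\[
\vert\langle\T(u),\xi\rangle\vert\le C(1+\Vert\nabla\de\Vert_\infty)L_M\Vert u\Vert_{\infty}\Vert\xi\Vert_{W^{1,s}},
\]
so $\T$ maps bounded sets to bounded sets. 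If one only has $s\le d$, the same works on the bounded set $\W=\K$, which is what \cref{thm:app-thm1} actually uses, since elements of $\K$ are $1$-Lipschitz and vanish on $\Gamma_D$, hence are uniformly bounded.

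\textbf{Step 2 (strong convergence of the coefficient).} Let $u_n\rightharpoonup u$ in $W^{1,s}_D$. By Rellich–Kondrachov (or Arzelà–Ascoli when $s>d$), $u_n\to u$ strongly in $L^s$, and uniformly when $s>d$. The $u_n$ stay in a fixed compact range, so the local Lipschitz bound gives $\beta(u_n)\to\beta(u)$ strongly in $L^s$ and in every $L^q$, $q<\infty$, with a uniform $L^\infty$ bound. Consequently $\beta(u_n)\nabla\de\to\beta(u)\nabla\de$ strongly in $L^{s'}(\Omega)^d$, and $\beta(u_n)\to\beta(u)$ strongly in $L^{s'}$.

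\textbf{Step 3 (passage to the limit).} For any $\xi$, we write
\[
\langle\T(u_n),u_n-\xi\rangle=\int\beta(u_n)(u_n-\xi)+\int\beta(u_n)\nabla\de\cdot\nabla(u_n-\xi).
\]
Each integral pairs a strongly convergent factor in $L^{s'}$ with a weakly convergent factor in $L^s$ ($u_n-\xi$, resp.\ $\nabla(u_n-\xi)$). Hence $\langle\T(u_n),u_n-\xi\rangle\to\langle\T(u),u-\xi\rangle$. This already gives the liminf inequality, without even using the $\limsup$ hypothesis. It also shows that $\T$ is weak-to-weak$^*$ sequentially continuous, which yields the required demicontinuity on finite-dimensional subspaces.

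The main obstacle is Step 1/2's need for uniform $L^\infty$ control of $u_n$, since $\beta$ is only locally Lipschitz. I would handle it either by the embedding when $s>d$, or by restricting to $\W=\K$, where the bound is automatic. I would note explicitly that this restriction is exactly the setting in which the lemma is invoked in \cref{prop:existence-stat}.
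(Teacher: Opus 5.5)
Your proposal is correct and follows essentially the same route as the paper. The compact embedding $W^{1,s}_D(\Omega)\hookrightarrow C(\overline{\Omega})$ for $s>d$ gives strong convergence of $\beta(u_n)$; pairing it with the weak convergence of $u_n-\xi$ and $\nabla(u_n-\xi)$ gives full convergence of $\langle \T(u_n),u_n-\xi\rangle$, so the $\liminf$ condition holds without using the $\limsup$ hypothesis. Two of your points are small improvements: you rely on a uniform $L^\infty$ bound plus the local Lipschitz property of $\beta$, rather than the paper's claim that $\beta$ is bounded (which is not among the standing assumptions), and you note that when $s\le d$ the argument still works on $\K$.
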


\begin{proof}
Let $(u_n)_{n \in \mathbb{N}}$ be a sequence in $W^{1,s}_{D}(\Omega)$ such that $u_n \rightharpoonup u$ weakly in $W^{1,s}_{D}(\Omega)$ and $\limsup_{n \to \infty} \langle\T(u_n),u_n-u\rangle \leq 0$. It is clear that $\T$ is bounded. To establish pseudo-monotonicity, it remains to show that
\[
    \liminf_{n\to\infty} \langle\T(u_n),u_n-\xi\rangle \geq \langle\T(u),u-\xi\rangle, \quad \mbox{for all } \xi \in W^{1,s}_{D}(\Omega).
\]
We have, by definition of the operator,
\begin{equation}\label{eq:lem1-1}
	\langle\T(u_n),u_n-\xi\rangle = \int_{\Om}\beta(u_n) (u_n - \xi)\dd x + \int_{\Om}\beta(u_n)\nabla\de\cdot\nabla(u_n-\xi)\dd x.
\end{equation}
Since $s>N$, Rellich-Kondrachov's theorem ensures that the embedding $W^{1,s}_{D}(\Omega) \hookrightarrow C(\overline{\Omega})$ is compact. Consequently, the weak convergence of $(u_n)_n$ in $W^{1,s}_{D}(\Omega)$ implies its strong uniform convergence $u_n \to u$ in $C(\overline{\Omega})$. Since $\beta$ is continuous and bounded, it follows that $\beta(u_n) \to \beta(u)$ strongly in $C(\overline{\Omega})$, and thus in any $L^s(\Omega)$. 

For the first term of \eqref{eq:lem1-1}, the strong convergence of $\beta(u_n)$ and the weak convergence of $u_n - \xi$ allow us to pass to the limit. For the second term, the product $\beta(u_n)\nabla\de$ converges strongly to $\beta(u)\nabla\de$, while $\nabla(u_n-\xi)$ converges weakly in $L^s(\Omega)^N$. Consequently, passing to the limit as $n \to \infty$ in \eqref{eq:lem1-1}, we obtain
\begin{equation*}
\begin{aligned}
    \lim_{n \to \infty} \langle\T(u_n),u_n-\xi\rangle &= \lim_{n \to \infty} \left( \int_{\Om}\beta(u_n) (u_n - \xi)\dd x + \int_{\Om}\beta(u_n)\nabla\de\cdot\nabla(u_n-\xi)\dd x \right) \\
    &= \int_{\Om}\beta(u) (u - \xi)\dd x + \int_{\Om}\beta(u)\nabla\de\cdot\nabla(u-\xi)\dd x \\
    &= \langle\T(u),u-\xi\rangle.
\end{aligned}
\end{equation*}
Thus, the $\liminf$ condition is trivially satisfied. This completes the proof.
\end{proof}
The following result can be found, for example, in \cite[Theorem 8.1]{LionsLivre}.
\begin{theorem}\label{thm:app-thm1}
Let $\V$ be a separable reflexive Banach space, and let $\W$ be a non-empty, closed, convex, and bounded subset of $\V$. Let $\T:\W\to\V^*$ be a pseudo-monotone operator. Then, for any $f\in\V^{*}$, there exists $u\in\W$ such that 
\[
    \langle\T(u),u-v\rangle \leq \langle f,u-v\rangle \quad \text{for all } v\in \W.
\]
\end{theorem}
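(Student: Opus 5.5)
The statement is the abstract existence result \cref{thm:app-thm1}: a pseudo-monotone operator on a closed, convex, bounded set admits a solution of the variational inequality. I plan to prove it by a Galerkin-type approximation combined with Brouwer's fixed point theorem in finite dimensions, followed by a limit passage that uses pseudo-monotonicity. Throughout I use that, in the sense of \cite{LionsLivre}, pseudo-monotone operators are bounded and continuous from finite-dimensional subspaces (intersected with $\W$) into $\V^*$ with the weak-$*$ topology.

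\textbf{Step 1: finite-dimensional problems.} By separability, choose an increasing sequence of finite-dimensional subspaces $\V_n$ whose union is dense in $\V$. Set $\W_n=\W\cap\V_n$, assuming without loss of generality that $\V_1\cap\W\neq\emptyset$; if $\W$ has no interior, I would approximate elements of $\W$ by elements of $\W_n$, for instance through a fixed point $u_0\in\W$ and a density argument. Each $\W_n$ is compact and convex in $\V_n$.

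On $\W_n$ I want $u_n\in\W_n$ such that $\langle \T(u_n)-f,u_n-v\rangle\le 0$ for all $v\in\W_n$. This is the Hartman--Stampacchia theorem. I would prove it by applying Brouwer's theorem to the map $u\mapsto P_{\W_n}\bigl(u-J_n^{-1}(\T(u)-f)|_{\V_n}\bigr)$, where $P_{\W_n}$ is the Euclidean projection onto $\W_n$ after fixing an inner product on $\V_n$. The map is continuous because $\T$ restricted to $\V_n$ is continuous into the finite-dimensional dual, and any fixed point gives the finite-dimensional inequality.

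\textbf{Step 2: extract a limit.} Since $\W$ is bounded and $\V$ is reflexive, a subsequence satisfies $u_n\rightharpoonup u$. Because $\W$ is closed and convex, it is weakly closed, so $u\in\W$.

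\textbf{Step 3: verify the hypothesis of pseudo-monotonicity.} By density, pick $w_n\in\W_n$ with $w_n\to u$ strongly. Testing Step 1 with $v=w_n$ gives
\[
\langle \T(u_n),u_n-u\rangle\le \langle \T(u_n),w_n-u\rangle+\langle f,u_n-w_n\rangle .
\]
The first term on the right tends to $0$ because $\T(u_n)$ is bounded and $w_n\to u$ strongly. The second tends to $0$ because $u_n-w_n\rightharpoonup0$. Hence $\limsup\langle \T(u_n),u_n-u\rangle\le0$.

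\textbf{Step 4: conclude.} Pseudo-monotonicity now gives $\langle\T(u),u-v\rangle\le\liminf\langle \T(u_n),u_n-v_n\rangle$, where for fixed $v\in\W$ I take $v_n\in\W_n$ with $v_n\to v$ strongly. Since the difference term $\langle\T(u_n),v_n-v\rangle$ vanishes by boundedness of $\T(u_n)$, this yields $\langle\T(u),u-v\rangle\le\lim\langle f,u_n-v_n\rangle=\langle f,u-v\rangle$.

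The main obstacle is the approximation of points of $\W$ by points of $\W_n$. This is not automatic for an arbitrary closed convex set, and it is why one works with $\V_n$ chosen adapted to $\W$. My first choice is to take $\V_n$ spanned by a countable dense subset of $\W$ together with a basis sequence, so that each point of that dense subset lies in $\W_n$ for all large $n$. The projections of $v$ onto $\W_n$ then converge to $v$.
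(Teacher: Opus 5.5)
The paper does not prove this statement; it only cites \cite[Theorem 8.1]{LionsLivre}. Your Steps 2--4 are sound. Your last paragraph also settles the approximation issue: if $\V_n$ contains the first $n$ points of a countable dense subset of $\W$, then $\mathrm{dist}(v,\W_n)\to 0$ for every $v\in\W$.

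The gap is in Step 1. Brouwer needs $u\mapsto \T(u)|_{\V_n}$ to be continuous on $\W_n$, and you take this from the claim that pseudo-monotone operators are continuous from finite-dimensional pieces into $\V^*$ with the weak-$*$ topology. That claim holds for an operator defined and pseudo-monotone on all of $\V$. Indeed, if $u_j\to u$ strongly, boundedness gives $\langle \T(u_j),u_j-u\rangle\to 0$, hence $\liminf_j\langle \T(u_j),u-v\rangle\ge\langle\T(u),u-v\rangle$ for every $v$. Testing with both $v$ and $2u-v$ then gives $\T(u_j)\rightharpoonup^{*}\T(u)$. Here, however, $\T$ is only defined on $\W$ and the test points range over $\W$, so $2u-v$ is not admissible, and continuity genuinely fails.

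A counterexample: take $\V=\R$, $\W=[0,1]$, $\T(0)=1$ and $\T(u)=-2$ for $u\in(0,1]$. For $u_j\to 0$, each number $\T(u_j)(u_j-v)$ equals either $-v$ or $2v-2u_j$. So the $\liminf$ is at least $-v=\T(0)(0-v)$ for all $v\in[0,1]$, and near any $u>0$ the operator is constant. Thus $\T$ is bounded and pseudo-monotone on $\W$, yet discontinuous at $0$. Your Brouwer map $u\mapsto P_{[0,1]}(u-\T(u))$ (with $f=0$) equals $0$ at $u=0$ and $1$ for every $u>0$.

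What pseudo-monotonicity does give on $\W_n$ is one-sided. By the argument above, $u\mapsto\langle\T(u)-f,u-v\rangle$ is lower semicontinuous on $\W_n$ for each fixed $v\in\W$. This function is affine in $v$ and vanishes at $v=u$. So Ky Fan's minimax inequality (or the KKM lemma) on the compact convex set $\W_n$ produces $u_n\in\W_n$ with $\langle\T(u_n)-f,u_n-v\rangle\le 0$ for all $v\in\W_n$. From there your Steps 2--4 go through unchanged.

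Alternatively, you can keep Brouwer if you add the hypothesis that $\T$ is the restriction to $\W$ of an operator pseudo-monotone on all of $\V$. That is exactly the situation in \cref{lem:app-lem-1}: there $\T$ is defined on the whole of $W^{1,s}_D(\Omega)$, so the paper's application is unaffected.
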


\begin{lemma}[Aubin-Lions-Simon \cite{LionsLivre}]\label{lem:ALS}
Let $X_0$, $X_1$, and $X_2$ be three Banach spaces. Assume that the embedding of $X_0$ into $X_1$ is compact and that the embedding of $X_1$ into $X_2$ is continuous. For any $T > 0$ and $1 \leq s, r \leq \infty$, define the space
\[
    W_{s,r} := \{u \in L^s(0,T; X_0) : \partial_t u \in L^r(0,T; X_2)\}.
\]
Then, the following compact embeddings hold:
\begin{itemize}
    \item If $s = \infty$ and $r > 1$, then the embedding of $W_{\infty,r}$ into $C([0,T]; X_1)$ is compact.
    \item If $s < \infty$ and $r \geq 1$, then the embedding of $W_{s,r}$ into $L^s(0,T; X_1)$ is compact.
\end{itemize}
\end{lemma}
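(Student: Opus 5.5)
The plan is Simon's classical route: reduce compactness in $L^s(0,T;X_1)$ (or $C([0,T];X_1)$) to two ingredients, a spatial interpolation inequality and equicontinuity of time translates in the weak space $X_2$. Throughout, let $\mathcal{F}$ be a bounded subset of $W_{s,r}$, say $\Vert u\Vert_{L^s(0,T;X_0)}+\Vert \partial_t u\Vert_{L^r(0,T;X_2)}\le C$ for all $u\in\mathcal{F}$.

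\textbf{Step 1 (Ehrling--Lions inequality).} For every $\eta>0$ there is $C_\eta$ such that
\begin{equation*}
\Vert v\Vert_{X_1}\le \eta \Vert v\Vert_{X_0}+C_\eta\Vert v\Vert_{X_2}\quad\text{for all } v\in X_0 .
\end{equation*}
I would prove this by contradiction. If it failed for some $\eta$, there would be $v_n$ with $\Vert v_n\Vert_{X_0}=1$ and $\Vert v_n\Vert_{X_1}>\eta+n\Vert v_n\Vert_{X_2}$. Compactness of $X_0\hookrightarrow X_1$ gives a subsequence converging in $X_1$ to a limit $v$. Since $\Vert v_n\Vert_{X_1}$ is bounded, $\Vert v_n\Vert_{X_2}\to0$, and continuity of $X_1\hookrightarrow X_2$ then forces $v=0$. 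This contradicts $\Vert v\Vert_{X_1}\ge\eta$. This step needs $X_1\hookrightarrow X_2$ to be injective, which is implicit in calling it an embedding.

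\textbf{Step 2 (time translates in $X_2$).} From $u(t+h)-u(t)=\int_t^{t+h}\partial_t u$ and Hölder's inequality, $\Vert u(t+h)-u(t)\Vert_{X_2}\le h^{1-1/r}\Vert\partial_t u\Vert_{L^r(X_2)}$. For $r>1$ this gives uniform Hölder equicontinuity in $X_2$. For $r=1$ it still gives $\Vert \tau_h u-u\Vert_{L^1(0,T-h;X_2)}\le h\Vert\partial_t u\Vert_{L^1(X_2)}$, hence the same in $L^s$ by interpolation against the $L^\infty(X_2)$ bound that follows from $W^{1,1}(0,T;X_2)\subset C([0,T];X_2)$. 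Combined with Step 1, I get
\begin{equation*}
\Vert\tau_h u-u\Vert_{L^s(0,T-h;X_1)}\le 2\eta C+C_\eta\,\omega(h),
\end{equation*}
with $\omega(h)\to0$ uniformly on $\mathcal{F}$. So time translates are uniformly small in $L^s(X_1)$.

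\textbf{Step 3 (compactness of averages).} For $0<t_1<t_2<T$, the integrals $\int_{t_1}^{t_2}u\,\dd t$ lie in a bounded set of $X_0$, hence in a relatively compact set of $X_1$. Simon's characterization of relatively compact sets in $L^s(0,T;X_1)$ for $s<\infty$ (a vector-valued Kolmogorov--Riesz theorem) then yields the second bullet. I would prove that characterization by approximating $u$ with the mollified or averaged functions $\frac1h\int_t^{t+h}u$; by Step 2 these are uniformly close to $u$, and for fixed $h$ they form an equicontinuous family with values in a compact set of $X_1$, so Arzelà--Ascoli applies.

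For the first bullet ($s=\infty$, $r>1$), Step 1 applied pointwise in time gives, for $u\in\mathcal{F}$,
\begin{equation*}
\Vert u(t)-u(t')\Vert_{X_1}\le 2\eta C+C_\eta|t-t'|^{1-1/r}.
\end{equation*}
This is uniform equicontinuity in $C([0,T];X_1)$, and Arzelà--Ascoli concludes once $\{u(t)\}$ is shown relatively compact in $X_1$ for each $t$.

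I expect that last pointwise compactness to be the main obstacle. The $L^\infty(X_0)$ bound only holds almost everywhere, so $u(t)$ need not be bounded in $X_0$ at every $t$. I would handle it by writing $u(t)$ as the average $\frac1h\int_t^{t+h}u$, which lies in a bounded set of $X_0$ and hence a compact set of $X_1$, plus an error that is uniformly small in $X_1$ by the equicontinuity estimate, then let $h\to0$. The remaining care is at the endpoint $r=1$ in the $L^s$ case, where only integrated translate estimates are available, which is exactly why Step 3 works with averages rather than point values.
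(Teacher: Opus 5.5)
The paper does not prove this lemma; it only quotes it from \cite{LionsLivre}. Your argument is correct. It is essentially J.~Simon's proof (\emph{Compact sets in the space $L^p(0,T;B)$}, Ann.\ Mat.\ Pura Appl.\ 146 (1987), Theorem~1 and Corollary~4), and Simon is actually the right source for the version stated here.

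The compactness theorem in Lions' book assumes $X_0$ and $X_2$ reflexive and $1<s,r<\infty$, and it argues differently:
\begin{enumerate}
\item extract a subsequence converging weakly in $W_{s,r}$ (this is where reflexivity is used);
\item reduce, via Ehrling's inequality, to strong convergence in $L^s(0,T;X_2)$;
\item obtain that by dominated convergence from pointwise convergence in $X_2$;
\item prove the pointwise convergence by writing $u_n(t)$ as an average over $[t,t+h]$ (weakly convergent in $X_0$, hence strongly convergent in $X_1$) plus an integral of $\partial_t u_n$ that is uniformly small for small $h$.
\end{enumerate}
That route is shorter, but it does not directly cover $r=1$, $s=\infty$, or non-reflexive spaces. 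Your translate-and-average characterization of relatively compact sets in $L^s(0,T;X_1)$, together with Arzel\`a--Ascoli in $C([0,T];X_1)$, handles every case in the statement.

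A few details should be made explicit.
\begin{itemize}
\item For $s=\infty$, the equicontinuity estimate itself, not only the pointwise compactness, uses $\Vert u(t)\Vert_{X_0}\le C$. So it holds only for $t,t'$ outside a $u$-dependent null set. Extend it to all of $[0,T]$ by density, using that $u$ is continuous with values in $X_2$ and that $X_1\hookrightarrow X_2$ is injective. This also produces an $X_1$-continuous representative of $u$, which shows that the embedding into $C([0,T];X_1)$ is well defined in the first place.
\item Forward averages $\frac1h\int_t^{t+h}u$ only exist for $t\le T-h$. Near $t=T$, use backward averages; their translate estimate is the same after a change of variables.
\item When $s=1$, the equicontinuity in $t$ of the averages at fixed $h$ must come from your Step~2 translate estimate. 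An $L^1$ bound alone gives no uniform integrability.
\end{itemize}
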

\section{On the discrete operators}\label{section:discrete_op}
\setcounter{proposition}{0}
\renewcommand{\theproposition}{\Alph{section}\arabic{proposition}}

In this section, we recall some details concerning the discrete divergence and gradient operators used in \cref{section:4}. First, let us recall that the space $X = \R^{m\times n}$ is equipped with the inner product and its associated norm
\[
\langle u,v \rangle = h^2\sum_{i=1}^{m}\sum_{j=1}^{n} u_{i,j} v_{i,j} \quad \mbox{ and } \quad \Vert u\Vert = \sqrt{\langle u, u\rangle},
\] 
where $h$ is the spatial mesh size. 

The discrete divergence operator $\dive_h : Y \longrightarrow X$ is defined for a given vector field $\Phi = (\Phi^1, \Phi^2)$ by
\begin{equation}\label{eq:div1}
    (\dive_h\Phi)_{i, j} = \frac{\Phi^{1}_{i+\frac{1}{2},j} - \Phi^{1}_{i-\frac{1}{2},j}}{h}  +\frac{\Phi^{2}_{i,j+\frac{1}{2}} - \Phi^{2}_{i,j-\frac{1}{2}}}{h}.
\end{equation}

Accordingly, the discrete gradient $\nabla_h: X \longrightarrow Y=\R^{(m+1)\times n} \times \R^{m\times (n+1)}$ is given by $(\nabla_h u)_{i,j} = \Big((\nabla_h u)^{1}_{i,j},(\nabla_h u)^{2}_{i,j}\Big)$, where the components depend on the boundary conditions,
\begin{equation}
    \label{eq:grad_h2}
    \begin{aligned}
        (\nabla_{h}u)^1_{i,j} &= -\left((D^1_p)^\top u\right)_{i,j}, \quad \text{if } ((m+\textstyle{\frac{1}{2}})h, j h) \in \Gamma_D,\\
        (\nabla_{h}u)^1_{i,j} &= -\left((D^1_m)^\top u\right)_{i,j}, \quad \text{if } ((m+\textstyle{\frac{1}{2}})h, j h) \in \Gamma_N,\\
        (\nabla_{h}u)^2_{i,j} &= -\left((D^2)^\top u\right)_{i,j}.
    \end{aligned}
\end{equation}

Here, the 1D finite difference matrices $D^{1}_{p}, D^{1}_{m}$, and $D^{2}$ are given by
{\footnotesize
\[
D^{1}_{p}= \begin{pmatrix}
        0 & 1/h & 0 &\cdots &&  & 0 \\
        0 & - 1/h &  1/h & 0 &\cdots&   & 0 \\
        0 & 0 & - 1/h &  1/h & 0 &\cdots & 0 \\
        \vdots &&  \vdots & \ddots &&  \vdots \\
        0 & 0 & \cdots & & 0 & - 1/h &  1/h
\end{pmatrix}
\]
\[
D^1_m= \begin{pmatrix}
        0 & 1/h & 0 &\cdots &&  & 0 \\
        0 & - 1/h &  1/h & 0 &\cdots&   & 0 \\
        0 & 0 & - 1/h &  1/h & 0 &\cdots & 0 \\
        \vdots &&  \vdots & \ddots &&  \vdots \\
        0 & 0 & \cdots & & 0 & - 1/h &  0
\end{pmatrix}
\]
and
\[
D^2= \begin{pmatrix}
        0 & 1/h & 0 &\cdots &&  & 0 \\
        0 & - 1/h &  1/h & 0 &\cdots&   & 0 \\
        0 & 0 & - 1/h &  1/h & 0 &\cdots & 0 \\
        \vdots &&  \vdots & \ddots &&  \vdots \\
        0 & 0 & \cdots & & 0 & - 1/h &  0
\end{pmatrix}.
\]
} 

With these definitions in hand, one can readily verify that $-\dive_h$ and $\nabla_h$ are formal adjoints. We conclude by recalling the following classical result.

\begin{proposition}[\cite{chambolle2011first}]\label{prop:adj_norm}
    Under the above-mentioned definitions and inner products, we have:
    \begin{itemize}
        \item The adjoint operator of the discrete gradient is $\nabla^{*}_h = -\operatorname{div}_h$.
        \item The operator norm satisfies: $\Vert \nabla_h \Vert^2 = \Vert \operatorname{div}_h\Vert^2 \leq \frac{8}{h^2}$.
    \end{itemize}
\end{proposition}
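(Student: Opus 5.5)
The plan is to check the two claims in turn, working from the explicit matrices. The first claim is a discrete summation by parts, and the second is a bound on differences.

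\textbf{Adjointness.} I would take the inner product $\langle\cdot,\cdot\rangle$ on $X$ and the analogous $h^2$-weighted inner product on $Y$. For $u\in X$ and $\Phi\in Y$ I would expand
\[
\langle -\dive_h\Phi,u\rangle_X=-h^2\sum_{i,j}\Big(\tfrac{\Phi^1_{i+\frac12,j}-\Phi^1_{i-\frac12,j}}{h}+\tfrac{\Phi^2_{i,j+\frac12}-\Phi^2_{i,j-\frac12}}{h}\Big)u_{i,j}.
\]
Reindexing each telescoping difference (Abel summation) produces $h^2\sum \Phi^1_{i+\frac12,j}\,\frac{u_{i+1,j}-u_{i,j}}{h}$ plus boundary terms, and similarly for the second component. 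The boundary terms are where the care is needed. On faces lying on $\Gamma_N$, the last row of $D^1_m$ and $D^2$ is truncated, which encodes $\Phi\cdot\nu=0$ and kills the term. On $\Gamma_D$, the matrix $D^1_p$ keeps the last entry, which encodes the ghost value $u=0$ and gives exactly $-u_{m,j}/h$. By construction, the rows of the gradient are $-(D)^\top u$, so the interior differences match term by term. This yields $\langle -\dive_h\Phi,u\rangle_X=\langle \Phi,\nabla_h u\rangle_Y$, that is, $\nabla_h^*=-\dive_h$. Once adjointness holds, $\Vert\nabla_h\Vert=\Vert\dive_h\Vert$ is automatic, since an operator and its adjoint have the same norm.

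\textbf{Norm bound.} Each component of $\nabla_h u$ is a difference of at most two entries of $u$ divided by $h$, with a single entry at Dirichlet faces and none at Neumann faces. Using $(a-b)^2\le 2a^2+2b^2$, I would bound
\[
\Vert\nabla_h u\Vert^2\le h^2\sum\frac{2(u_{i+1,j}^2+u_{i,j}^2)}{h^2}+h^2\sum\frac{2(u_{i,j+1}^2+u_{i,j}^2)}{h^2}.
\]
Each $u_{i,j}$ appears in at most two $x$-differences and at most two $y$-differences. The right-hand side is therefore at most $\frac{2\cdot 2+2\cdot 2}{h^2}\,h^2\sum u_{i,j}^2=\frac{8}{h^2}\Vert u\Vert^2$. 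A boundary face carrying the single term $u^2/h^2$ only decreases the count, so the bound survives. This gives $\Vert\nabla_h\Vert^2\le 8/h^2$, as in \cite{chambolle2011first}.

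The main obstacle is the bookkeeping at the mixed Dirichlet/Neumann boundary. There I would verify on a single row that the transposed matrices reproduce exactly the boundary terms of the summation by parts. Because the $D$ matrices as written have a zero first column, I may need to adopt the convention that index $0$ corresponds to the left boundary face.
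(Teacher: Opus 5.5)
Your proposal is correct and follows the standard argument behind this result. With the same $h^2$ weight on $X$ and $Y$, the definition $\nabla_h=-D^\top$ makes $\nabla_h^*=-\dive_h$ a matter of transposition, provided $\dive_h$ is read as the matrix $D$ so that fluxes on Neumann faces drop out, which is exactly how you interpret it. Your $(a-b)^2\le 2a^2+2b^2$ count, with two faces per cell in each direction, is how the bound $\Vert\nabla_h\Vert^2\le 8/h^2$ is usually obtained; the paper gives no proof of its own and simply cites Chambolle--Pock.
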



\section{Application of the primal-dual algorithm}\label{appendix:pd}

\subsection{Computation of the velocity field $V$}

In model \eqref{evolution223}, the predicted density $\rho^{k+\frac{1}{2}}$ is calculated using equation \eqref{t2}. This requires knowing the value of the velocity field $V(t_k,\cdot)=-\nabla \de(t_k,\cdot)$ at the instant $t_k$. To compute the potential $\de(t_k,\cdot)$, we solve the Eikonal equation
\begin{equation}\label{eikp}
	\left\{ \begin{array}{ll}
		\vert \nabla \de(t_k,\cdot)\vert =\he(p(t_k,\cdot))\quad & \hbox{ in }\Omega,\\  
		\de(t_k,\cdot) =0 & \hbox{ on }\Gamma_D,
	\end{array}
	\right.
\end{equation}
where $p(t_k,\cdot)=p^k$ is the congestion pressure obtained from the dual problem \eqref{dual221}. 

Recall that the solution of \eqref{eikp} can be obtained by solving the maximization problem
\begin{equation}
	\label{vitesse2}
	\max_{z \in W^{1,\infty}_D(\Om)}\left\{  \int_\Omega  z \: \dd x  : \vert  \nabla z\vert \leq \he(p^k) \mbox{ a.e. in } \Omega  \right\},
\end{equation}
which can be recast as the minimization problem
\begin{equation}
	\inf_{z \in  W^{1,\infty}_{D}(\Om) } \mathcal{F}(z)+ \mathcal{G} (\nabla z),
\end{equation}
where $\mathcal{F}(z) = - \int_\Omega  z \: \dd x$, and $\mathcal{G}(\mathbf{q}) = \I_K(\mathbf{q})$ is the indicator function of the convex set 
\[
K = \big\{\mathbf{q} \in L^\infty(\Omega; \R^2) : \Vert \mathbf{q}(x)\Vert \leq \he(p^k(x)) \mbox{ a.e.} \big\}.
\]
Once again, the approximated solution is efficiently computed by applying the Chambolle-Pock (PD) algorithm to this primal-dual formulation (see \eg  \cite{EQE}).

\section*{Acknowledgments}

This publication is based upon work supported by King Abdullah University of Science and Technology (KAUST) under Award No. ORFS-CRG12-2024-6430. The work of H.E. was supported by the FMJH Program PGMO (grant no. P-2024-0019).

\bibliographystyle{plainurl}

\bibliography{cm2}

\end{document}